\documentclass[reqno,10pt]{amsart}

\usepackage{amsmath} 
\usepackage{pdfsync}
\usepackage{parskip}
\usepackage{hyperref}
\usepackage{mdframed}
\usepackage{tikz}
\overfullrule=4pt
\numberwithin{equation}{section}
\bibliographystyle{plain}

\newtheorem{theorem}{Theorem}[section]
\newtheorem{prop}[theorem]{Proposition}

\newtheorem{lem}[theorem]{Lemma}
\newtheorem{cor}[theorem]{Corollary}

\theoremstyle{remark}
\newtheorem{remark}[theorem]{Remark}
\newtheorem{rmk}[theorem]{Remarks}

\def\R{\mathbb{R}}
\def\H{\mathbb{H}}
\def\L{\mathcal{L}}
\def\Lis{\mathcal{L}{\rm{is}}}
\def\id{{\rm{id}}}
\def\im{{\rm{im}}}
\def\supp{{\rm{supp}}}

\def\uf{{\rm{unif}}}

\def\x{x_{\mathsf{p}}}
\def\ev{{\mathsf{C}}}
\def\M{\mathsf{M}}
\def\B{\mathbb{B}(0,r)}
\def\U{\mathsf{O}}
\def\Uk{\mathsf{O}_{\kappa}}
\def\Uki{\mathsf{O}_{\iota}}
\def\Ukp{\mathsf{O}_{\kappa_{\sf p}}}
\def\Q{\mathbb{B}^{m}}
\def\Qk{\mathbb{B}^{m}_{\kappa}}
\def\Qi{\mathbb{B}^{m}}
\def\pk{\pi_\kappa}

\def\F{\mathfrak{F}}
\def\N{\mathbb{N}}
\def\tm{\theta^{\ast}_{\mu}}
\def\ttm{{\Theta}^{\ast}_{\mu}}

\def\ttl{{\Theta}^{\ast}_{\lambda,\mu}}
\def\tu{{T}_{\mu}}
\def\kb{\varphi^{\ast}_{\iota}}
\def\kf{\psi^{\ast}_{\iota}}
\def\kbk{\varphi^{\ast}_{\kappa}}
\def\kfk{\psi^{\ast}_{\kappa}}

\def\rh{{\varrho}^{\ast}_{\lambda}}
\def\tu{{T}_{\mu}}
\def\ez{\mathbb{E}_{0}}
\def\ef{\mathbb{E}_{1}}

\def\K{\mathbb{K}}
\def\f{\boldsymbol{f}}
\def\Re{\mathcal{R}}
\def\Rek{\mathcal{R}_{\kappa}}
\def\Rc{\mathcal{R}^c}
\def\Rck{\mathcal{R}^c_{\kappa}}

\def\Dfi{\rm{Diff}^{\hspace{.1em}\infty}}
\def\X{\mathbb{X}}
\def\Xk{\mathbb{X}_{\kappa}}
\def\Xo{\tilde{\varsigma}}
\def\Xt{\varsigma}

\def\Hom{{\text{Hom}}}


\hyphenation{Bernstein}
\begin{document}

\title[Continuous Maximal Regularity on Riemannian Manifolds]{Continuous Maximal Regularity on Uniformly Regular Riemannian Manifolds}
\author[Y. Shao]{Yuanzhen Shao}
\address{Department of Mathematics,
         Vanderbilt University, 
         Nashville, TN 37240, USA}
\email{yuanzhen.shao@vanderbilt.edu}
\author[G. Simonett]{Gieri Simonett}
\address{Department of Mathematics\\
         Vanderbilt University \\
         Nashville, TN~37240, USA}
\email{gieri.simonett@vanderbilt.edu}

\date{}
\subjclass[2000]{54C35, 58J99, 35K55, 53C44, 35B65, 53A30}
\keywords{H\"older spaces, non-complete and non-compact Riemannian manifolds, continuous maximal regularity, geometric evolution equations, the Yamabe flow, conformal geometry, scalar curvature, real analytic solutions, the implicit function theorem}
\thanks{
This work was partially supported by a grant from the Simons Foundation (\#245959 to G.~Simonett) and by
a grant from NSF (DMS-1265579 to G.~Simonett). }
\maketitle
\begin{abstract}
We establish continuous maximal regularity results for parabolic differential operators
acting on sections of tensor bundles on uniformly regular Riemannian manifolds $\M$.
As an application, we show that solutions to the Yamabe flow on $\M$ instantaneously regularize and become real analytic in space and time.
The regularity result is obtained by introducing a family of parameter-dependent diffeomorphims
acting on functions on $\M$ in conjunction with maximal regularity and the implicit function theorem.
\end{abstract}

\maketitle

\section{\bf Introduction}
\medskip

It is the main purpose of this paper to introduce a basic theory of continuous maximal regularity 
for parabolic differential operators acting on sections of tensor bundles
on a class of Riemannian manifolds called \emph{uniformly regular Riemannian manifolds} in this paper.  

This concept was first introduced by H. Amann in \cite{Ama13}. These manifolds may be non-compact, or even non-complete.
As a special case, any complete manifold $(\M,g)$ without boundary and with bounded geometry 
(i.e. $\M$ has positive injectivity radius and all covariant derivatives of its curvature tensor are bounded) 
is a uniformly regular Riemannian manifold, see \cite[Example~2.1(f)]{Ama13}.
The class of uniformly regular Riemannian manifolds 
is large enough in the sense that it satisfies most of the geometric conditions 
imposed by other authors in the study of geometric evolution problems.
In this paper and a subsequent one \cite{ShaoPre}, we mainly focus on two kinds of geometric evolution problems, namely the evolution of metrics and the evolution of surfaces driven by their curvatures. Numerous results have been formulated for geometric evolution equations over compact closed manifolds, which are special cases of uniformly regular Riemannian manifolds. 
Nowadays, there is increased interest in generalizing these results for non-compact manifolds, or manifolds with boundary.
Most of the achievements in this research line are formulated for complete manifolds with certain restrictions on their curvatures. Within the class of uniformly regular Riemannian manifolds, we are able to relax many of these constraints. One typical instance 
 is the Yamabe flow, a well-known geometric evolution problem. 
We will show in Section~5 that this problem possesses a local solution as long as the manifold has a compatible uniformly regular structure. 
Under appropriate assumptions on the background metric $g_0$, we will in addition show that
solutions instantaneously regularize and become real analytic in space and time. 

For a densely embedded Banach couple $E_1\overset{d}{\hookrightarrow}E_0$, we consider the following abstract linear inhomogeneous parabolic equation:
\begin{align}
\label{S1: abstract-eq}
\begin{cases}
\partial_t u(t) +A u(t)=f(t),\hspace*{1em}t\in\dot{I},\\
u(0)=u_0
\end{cases}
\end{align}
on $I:=[0,T]$. Given $I\in\{[0,T],[0,T)\}$, $\dot{I}:=I\setminus\{0\}$. $(E_0,E_1)$ is then called a pair of continuous maximal regularity for 
$A\in\mathcal{L}(E_1,E_0)$ if 
\begin{equation}
\label{MR}
(\partial_t +A,\gamma_0)\in{\Lis}(\ef(I),\ez(I)\times E_{\gamma}),
\end{equation}
that is, $(\partial_t +A,\gamma_0)$ is a linear isomorphism between the indicated spaces,
where $\gamma_0u:=u(0)$, $E_\gamma:=(E_0,E_1)_{\gamma,\infty}^0$, and $(\cdot,\cdot)_{\gamma,\infty}^0$ denotes the 
continuous interpolation method, see Section 2.2. 
The reader may refer to \eqref{S3: ez&ef} for the definitions of $\ez(I)$ and $\ef(I)$. 
\eqref{MR} implies that  problem \eqref{S1: abstract-eq} admits for each 
$(f,u_0)\in \ez(I)\times E_{\gamma}$ a unique solution $u$ which
has the best possible regularity, i.e. there is no loss of regularity as
$\partial_tu$ and $Au$ have the same regularity as $f$.
The theory of continuous maximal regularity provides a general and flexible tool
for the analysis of non-linear parabolic equations, including fully nonlinear problems.
 In some cases it can be viewed as a substitute for the well-known Nash-Moser iteration approach
to fully nonlinear parabolic equations. 
In addition to providing existence and uniqueness of solutions, 
maximal regularity theory combined with the implicit function theorem 
renders a powerful tool to establishing further regularity results for solutions of
nonlinear parabolic problems, and studying geometric properties of the semiflows
generated.
Here we refer to \cite{CleSim01, DaPra88, EscMaySim98, EscSim98, EscPruSim032} for a list of related work.  
The theory of maximal regularity has been well-formulated in Euclidean spaces, and used on compact closed manifolds.
However, not until recently was $L_p$-maximal regularity theory established on non-compact manifolds \cite{Ama14}, where the author uses retraction-coretraction systems of Sobolev spaces to translate the problem onto Euclidean spaces. 
We will make use of a similar building block to establish a theory of continuous maximal regularity on uniformly regular Riemannian manifolds without boundary. 
Here we should like to mention that
Amann's result \cite{Ama14} is of much greater generality and remarkably more technical, as 
maximal regularity results for parabolic boundary value problems on manifolds
with boundary are obtained.
Nevertheless, our result is distinctive in two respects. First, it is the first maximal regularity theory for H\"older continuous functions on uniformly regular manifolds. Secondly, our result is formulated for tensor-valued problems. This theory complements the work in \cite{EscMaySim98, EscSim98} for the compact case. Although we 
will not consider parabolic boundary value problems in this paper,
we nevertheless introduce function spaces on uniformly regular Riemannian manifolds
with boundary. This set-up may prove useful for further studies of geometric evolution 
equations, say the Yamabe flow, on manifolds with boundary. There are a few results dealing with (semilinear) parabolic equations on non-compact complete Riemannian manifolds under various curvature assumptions which are based on heat kernel estimates, e.g. Qi S. Zhang \cite{Zhang97, Zhang00}, A.L. Mazzucato and V. Nistor \cite{Mazz06}, F. Punzo \cite{Punzo12}, C. Bandle, F. Punzo and A. Tesei \cite{Band12}. The approach developed in \cite{Ama14} and in this paper does not rely on heat kernel estimates and is, thus, not limited to second order equations. It can be applied to a wide array of nonlinear parabolic equations, including quasilinear 
(and even fully nonlinear) equations.

A serious challenge in the development of a general and useful theory of function spaces on uniformly regular manifolds turns out to be the availability of interpolation results. 
This difficulty can be overcome by the interpolation result \cite[Section~1.18.1]{Trib78} in the case of Sobolev spaces, but is surprisingly difficult for H\"older spaces, which are natural candidates for the spaces $E_0, E_1$ in \eqref{S1: abstract-eq}. Thanks to the work of H. Amann in \cite{Ama13, Ama12}, we are able to build up the theory via a linear isomorphism $\f$ defined in Section~2.2.   

Section~2 is the step stone to the theory of continuous maximal regularity. Section~2.1 is of preparatory character, wherein we state the geometric assumptions and some basic concepts on tensor bundles and connections from manifold theory. In the first half of the subsequent subsection, we introduce H\"older continuous tensor fields and the corresponding retraction-coretraction theory of these spaces. This work, as aforementioned, was accomplished by H. Amann in his two consecutive papers \cite{Ama13, Ama12}. It paves the path for the interpolation, embedding, point-wise multiplication and differentiation theorems for H\"older continuous tensor fields in the second half of the same section and Section~2.3. 

The main theorem of this paper, Theorem \ref{generator of analytic semigroup-tensor}, is then formulated in Section~3. Its proof relies on the retraction-coretraction system established in Section~2.2 and a careful estimate of the lower order terms via interpolation theory. A resolvent estimate for so-called $(\mathcal{E},\vartheta;l)$-elliptic operators acting on tensor bundles is presented therein. Following the well-known semigroup theory and G. Da~Prato, P. Grisvard and S.~Angenent's work, we then prove a continuous maximal regularity theory on uniformly regular Riemannian manifolds. We will present the theory in a general format, that is to say, 
we establish maximal regularity for so-called {\em normally elliptic} differential
operators acting on tensor fields, with minimal regularity assumptions on the coefficients
of the differential operators.
One of the reasons for considering tensor fields, rather than scalar functions,
lies in the fact that this general result is used in a forthcoming paper \cite{ShaoPre}
to prove analyticity of solutions to the Ricci flow. 

In Section~4, we give a short introduction to a parameter-dependent translation technique on manifolds, which combined with maximal regularity theory serves as a very beneficial tool for establishing regularity of solutions to parabolic equations. The idea of employing a localized translation in conjunction with the implicit function theorem was initiated in \cite{EscPruSim031} by J. Escher, J. Pr\"uss and G. Simonett. Through the retraction-coretraction systems, we can thus introduce an analogy for functions over manifolds. We refer the reader to \cite{ShaoPre} for more information on this technique.

After importing all the theoretic tools, in Section~5, we thus can present an application 
to the Yamabe flow. The reader may refer to Section~5 for a brief historic account of this problem. It has been proved that the normalized Yamabe flow on compact manifolds admits a unique global and smooth solution, for smooth initial data, see \cite{Ye94}. We will show in this paper that this solution exists analytically for all positive times. Less is known about the Yamabe flow on non-compact manifolds. To the best of the authors' knowledge, all available results in this direction require the underlying manifold to be complete and have bounded curvatures, or to be of some explicit expression, see \cite{AnMa99} and \cite{Burch08}. We will formulate an existence and regularity result for the Yamabe flow on a manifold, which may not satisfy any of the above conditions.  

In the rest of this introductory section, we give the precise definition of uniformly regular Riemannian manifolds and present the existence of a localization system, which plays a key role in the retraction-coretraction theory established in Section~2.2. After that we briefly list some notations that we shall use throughout.
\smallskip\\
\goodbreak
{\textbf {Assumptions on manifolds:}}
In this section, we list some background information on manifolds, which provide the basis for the H\"older, little H\"older spaces and tensor fields on Riemannian manifolds to be introduced below. This fundamental work was first introduced in \cite{Ama13} and \cite{Ama12}. 
\smallskip\\
Let $({\M},g)$ be a $C^{\infty}$-Riemannian manifold of dimension $m$ with or without boundary endowed with $g$ as its Riemannian metric such that its underlying topological space is separable. An atlas $\mathfrak{A}:=({\Uk},\varphi_{\kappa})_{{\kappa}\in \mathfrak{K}}$ for ${\M}$ is said to be normalized if 
\begin{align*}
\varphi_{\kappa}({\Uk})=
\begin{cases}
\Q, \hspace*{1em}& \Uk\subset\mathring{\M},\\
\Q\cap\H^m, &\Uk\cap\partial\M \neq\emptyset,
\end{cases}
\end{align*}
where $\H^m$ is the closed half space $\R^{+}\times\R^{m-1}$ and $\Q$ is the unit Euclidean ball centered at the origin in ${\R}^m$. We put $\Qk:=\varphi_{\kappa}({\Uk})$ and  $\psi_{\kappa}:=\varphi_{\kappa}^{-1}$. 
\smallskip\\
The atlas $\mathfrak{A}$ is said to have \emph{finite multiplicity} if there exists $K\in{\N}$ such that any intersection of more than $K$ coordinate patches is empty. Put
\begin{align*}
\mathfrak{N}(\kappa):=\{\tilde{\kappa}\in\mathfrak{K}:\mathsf{O}_{\tilde{\kappa}}\cap\Uk\neq\emptyset \}.
\end{align*} 
The finite multiplicity of $\mathfrak{A}$ and the separability of $\M$ imply that $\mathfrak{A}$ is countable.
If two real-valued functions $f$ and $g$ are equivalent in the sense that $f/c\leq g\leq cf$ for some $c\geq 1$, then we write $f\sim g$.
\smallskip\\
An atlas $\mathfrak{A}$ is said to fulfil the \emph{uniformly shrinkable} condition, if it is normalized and there exists $r\in (0,1)$ such that $\{\psi_{\kappa}(r{\Qk}):\kappa\in\mathfrak{K}\}$ is a cover for ${\M}$. 
\smallskip\\
Following H. Amann \cite{Ama13, Ama12}, we say that $(\M,g)$ is a {\bf{uniformly regular Riemannian manifold}} if it admits an atlas $\mathfrak{A}$ such that
\begin{itemize}
\item[(R1)] $\mathfrak{A}$ is uniformly shrinkable and has finite multiplicity.
\item[(R2)] $\|\varphi_{\eta}\circ\psi_{\kappa}\|_{k,\infty}\leq{c(k)}$, $\kappa\in\mathfrak{K}$, $\eta\in\mathfrak{N}(\kappa)$, and $k\in{\N}_0$.
\item[(R3)] ${\psi_{\kappa}^{\ast}}g\sim{g_m}$, $\kappa\in\mathfrak{K}$. Here $g_m$ denotes the Euclidean metric on ${\R}^m$ and ${\psi_{\kappa}^{\ast}}g$ denotes the pull-back metric of $g$ by ${\psi_{\kappa}}$.
\item[(R4)] $\|{\psi_{\kappa}^{\ast}}g\|_{k,\infty}\leq c(k)$, $\kappa\in\mathfrak{K}$ and $k\in\N_0$.
\end{itemize}
Here $\|u\|_{k,\infty}:=\max_{|\alpha|\leq k}\|\partial^{\alpha}u\|_{\infty}$, and it is understood that a constant $c(k)$, like in (R2), depends only on $k$. An atlas $\mathfrak{A}$ satisfying (R1) and (R2) is called a \emph{uniformly regular atlas}. (R3) reads as
\begin{center}
$|\xi|^2/c\leq{\psi_{\kappa}^{\ast}g(x)(\xi,\xi)}\leq{c|\xi|^2}$,\hspace{.5em} for any $x\in{\Qk},\xi\in{\R}^m, \kappa\in\mathfrak{K}$ and some $c\geq{1}$.
\end{center}
We refer to \cite{Ama13b} for examples of uniformly regular Riemannian manifolds.

Given any Riemannian manifold $\M$ without boundary, by a result of R.E. Greene \cite{Greene78} 
there  exists a complete Riemannian metric $g_c$ with bounded geometry on $\M$,  see \cite[Theorem~2']{Greene78} and \cite[Remark~1.7]{MullerAx}. Hence we can always find a Riemannian metric $g_c$ making $(\M,g_c)$ uniformly regular. However, this result is of restricted interest, since in most of the PDE problems we are forced to work with a fixed background metric whose compatibility with the metric $g_c$ is unknown.
\smallskip\\
A uniformly regular Riemannian manifold $\M$ admits a \emph{localization system subordinate to} $\mathfrak{A}$, by which we mean a family $(\pi_{\kappa},\zeta_{\kappa})_{\kappa\in\mathfrak{K}}$ satisfying:
\begin{itemize}
\item[(L1)] ${\pk}\in\mathcal{D}({\Uk},[0,1])$ and $(\pi_{\kappa}^{2})_{\kappa\in{\mathfrak{K}}}$ is a partition of unity subordinate to $\mathfrak{A}$.
\item[(L2)] $\zeta_{\kappa}:={\kbk}\zeta$ with $\zeta\in\mathcal{D}({\Q},[0,1])$ satisfying $\zeta|_{\supp({{\kfk}\pi_{\kappa}})}\equiv 1$, $\kappa\in\mathfrak{K}$.
\item[(L3)] $\|\psi_{\kappa}^{\ast}{\pk}\|_{k,\infty} \leq{c(k)}$, for $\kappa\in\mathfrak{K}$, $k\in{\N}_0$.
\end{itemize}
The reader may refer to \cite[Lemma~3.2]{Ama13} for a proof. In addition to the above conditions, we will find it useful to define the following auxiliary function
\begin{align}
\label{section 1: biggest cut-off fn}
\varpi_{\kappa}:={\kbk}\varpi \text{ with } \varpi\in\mathcal{D}({\Q},[0,1]) \text{ satisfying that } \varpi|_{\supp(\zeta)}\equiv 1.
\end{align}
Lastly, if, in addition, the atlas $\mathfrak{A}$ and the metric $g$ are real analytic, we say that $(\M,g)$ is a {\bf{$C^{\omega}$-uniformly regular Riemannian manifold}}.

{\textbf{Notations:}}
Let ${\K}\in\{{\R},\mathbb{C}\}$. For any open subset $U\subseteq{\R}^m$, we abbreviate ${\F}^{s}(U,\mathbb{K})$ to ${\F}^{s}(U)$, where $s\geq 0$ and ${\F}\in \{bc,BC\}$. 
The precise definitions for these function spaces will be presented in Section 2. Similarly, 
${\F}^{s}({\M})$ stands for the corresponding $\mathbb{K}$-valued spaces defined on the manifold ${\M}$. 
\smallskip\\
Let $\| \cdot \|_{\infty}$ and $\| \cdot \|_{s,\infty}$ denote the usual norm of the Banach spaces $BC(U)$, $BC^{s}(U)$, respectively. Likewise, their counterparts defined on ${\M}$ are expressed by $\| \cdot \|_{\F}^{\M}$, where $\| \cdot \|_{\F}$ stands for either of the norms defined on $U$.
\smallskip\\
Given $\sigma,\tau\in\N_0$, 
$$T^{\sigma}_{\tau}{\M}:=T{\M}^{\otimes{\sigma}}\otimes{T^{\ast}{\M}^{\otimes{\tau}}}$$ 
is the $(\sigma,\tau)$-tensor bundle of $\M$, where $T{\M}$ and $T^{\ast}{\M}$ are the tangent and the cotangent bundle of ${\M}$, respectively.
We write $\mathcal{T}^{\sigma}_{\tau}{\M}$ for the $C^{\infty}({\M})$-module of all smooth sections of $T^{\sigma}_{\tau}\M$,
and $\Gamma(\M,T^{\sigma}_{\tau}{\M})$ for the set of all sections.

For abbreviation, we set $\mathbb{J}^{\sigma}:=\{1,2,\ldots,m\}^{\sigma}$, and $\mathbb{J}^{\tau}$ is defined alike. Given local coordinates $\varphi=\{x^1,\ldots,x^m\}$, $(i):=(i_1,\ldots,i_{\sigma})\in\mathbb{J}^{\sigma}$ and $(j):=(j_1,\ldots,j_{\tau})\in\mathbb{J}^{\tau}$, we set
\begin{align*}
\frac{\partial}{\partial{x}^{(i)}}:=\frac{\partial}{\partial{x^{i_1}}}\otimes\cdots\otimes\frac{\partial}{\partial{x^{i_{\sigma}}}}, \hspace*{.5em} \partial_{(i)}:=\partial_{i_{1}}\circ\cdots\circ\partial_{i_{\sigma}} \hspace*{.5em} dx^{(j)}:=dx^{j_1}\otimes{\cdots}\otimes{dx}^{j_{\tau}}
\end{align*}
with $\partial_{i}=\frac{\partial}{\partial{x^i}}$. The local representation of 
$a\in \Gamma(\M,T^{\sigma}_{\tau}{\M})$ with respect to these coordinates is given by 
\begin{align}
\label{local}
a=a^{(i)}_{(j)} \frac{\partial}{\partial{x}^{(i)}} \otimes dx^{(j)} 
\end{align}
with coefficients $a^{(i)}_{(j)}$ defined on $\Uk$.

For a topological set $U$, $\mathring{U}$ denotes its interior.
If $U$ consists of only one point, we define $\mathring{U}:=U$. 
For any two Banach spaces $X,Y$, $X\doteq Y$ means that they are equal in the sense of equivalent norms. The notation $\Lis(X,Y)$ stands for the set of all bounded linear isomorphisms from $X$ to $Y$.
\bigskip
\section{\bf Function Spaces on Uniformly Regular Riemannian Manifolds}
Most of the work in Section 2.1, and 2.2 is laid out in \cite{Ama13} and \cite{Ama12} for weighted functions and tensor fields defined on manifolds with ``singular ends" characterized by a ``singular function" $\rho\in C^{\infty}(\M,(0,\infty))$. Such manifolds are uniformly regular iff the singular datum satisfies $\rho\sim 1_{\M}$. Because of this, we will state some of the results therein without providing proofs below.
\subsection{\bf Tensor Bundles}
Let $\mathbb{A}$ be a countable index set. Suppose $E_{\alpha}$ is for each $\alpha\in\mathbb{A}$ a locally convex space. We endow $\prod_{\alpha}E_{\alpha}$ with the product topology, that is, the coarsest topology for which all projections $pr_{\beta}:\prod_{\alpha}E_{\alpha}\rightarrow{E_{\beta}},(e_{\alpha})_{\alpha}\mapsto{e_{\beta}}$ are continuous. By $\bigoplus_{\alpha}E_{\alpha}$ we mean the vector subspace of $\prod_{\alpha}E_{\alpha}$ consisting of all finitely supported elements, equipped with the inductive limit topology, that is, the finest locally convex topology for which all injections $E_{\beta}\rightarrow\bigoplus_{\alpha}E_{\alpha}$ are continuous.
\smallskip\\
We denote by $\nabla=\nabla_g$ the Levi-Civita connection on $T{\M}$. It has a unique extension over $\mathcal{T}^{\sigma}_{\tau}{\M}$ satisfying, for $X\in\mathcal{T}^1_0{\M}$,
\begin{itemize}
\item[(i)] $\nabla_{X}f=\langle{df,X}\rangle$, \hspace{1em}$f\in{C^{\infty}({\M})}$,
\item[(ii)] $\nabla_{X}(a\otimes{b})=\nabla_{X}a\otimes{b}+a\otimes{\nabla_{X}b}$, \hspace{1em}$a\in\mathcal{T}^{{\sigma}_1}_{{\tau}_1}{\M}$, $b\in\mathcal{T}^{{\sigma}_2}_{{\tau}_2}{\M}$,
\item[(iii)] $\nabla_{X}\langle{a,b}\rangle=\langle{\nabla_{X}a,b}\rangle+\langle{a,\nabla_{X}b}\rangle$, \hspace{1em}$a\in\mathcal{T}^{{\sigma}}_{{\tau}}{\M}$, $b\in\mathcal{T}^{{\tau}}_{{\sigma}}{\M}$,
\end{itemize}
where $\langle{\cdot,\cdot}\rangle:\mathcal{T}^{\sigma}_{\tau}{\M}\times{\mathcal{T}^{\tau}_{\sigma}{\M}}\rightarrow{C^{\infty}({\M})}$ is the extension of the fiber-wise defined duality pairing on ${\M}$, cf. \cite[Section 3]{Ama13}. Then the covariant (Levi-Civita) derivative is the linear map
\begin{center}
$\nabla: \mathcal{T}^{\sigma}_{\tau}{\M}\rightarrow{\mathcal{T}^{\sigma}_{\tau+1}{\M}}$, $a\mapsto{\nabla{a}}$
\end{center}
defined by
\begin{center}
$\langle{\nabla{a},b\otimes{X}}\rangle:=\langle{\nabla_{X}a,b}\rangle$, \hspace{1em}$b\in\mathcal{T}^{\tau}_{\sigma}{\M}$, $X\in\mathcal{T}^{1}_{0}{\M}$.
\end{center}
For $k\in{\N}_0$, we define
\begin{center}
$\nabla^k: \mathcal{T}^{\sigma}_{\tau}{\M}\rightarrow{\mathcal{T}^{\sigma}_{\tau+k}{\M}}$, $a\mapsto{\nabla^k{a}}$
\end{center}
by letting $\nabla^0{a}:=a$ and $\nabla^{k+1}a:=\nabla\circ\nabla^k{a}$.
We can also extend the Riemannian metric $(\cdot|\cdot)_g$ from the tangent bundle to any $(\sigma,\tau)$-tensor bundle $T^{\sigma}_{\tau}{\M}$ such that $(\cdot|\cdot)_g:T^{\sigma}_{\tau}{\M}\times{T^{\sigma}_{\tau}{\M}}\rightarrow{\K}$ by setting
\begin{center}
$(\cdot|\cdot)_g:(a_1,\ldots,a_{\sigma},b_1,\ldots,b_{\tau})\times(c_1,\ldots,c_{\sigma},d_1,\ldots,d_{\tau})\mapsto{\prod_{i=1}^{\sigma}(a_i|c_i)_g}{\prod_{i=1}^{\tau}(b_i|d_i)_{g^{\ast}}}$,
\end{center}
where $a_i,c_i\in{T{\M}}$, $b_i,d_i\in{T^{\ast}{\M}}$ and $(\cdot|\cdot)_{g^{\ast}}$ denotes the induced contravariant metric. In addition,
\begin{center}
$|\cdot|_g:\mathcal{T}^{\sigma}_{\tau}{\M}\rightarrow{C^{\infty}}({\M})$, $a\mapsto\sqrt{(a|a)_g}$
\end{center}
is called the (vector bundle) \emph{norm} induced by $g$.
\smallskip\\
We assume that $V$ is a $\K$-valued tensor bundle on $\M$ and $E$ is a $\K$-valued vector space, i.e.,
\begin{center}
$V=V^{\sigma}_{\tau}:=\{T^{\sigma}_{\tau}\M, (\cdot|\cdot)_g\}$,\hspace{.5em} and\hspace{.5em} $E=E^{\sigma}_{\tau}:=\{\K^{m^{\sigma}\times m^{\tau}},(\cdot|\cdot)\}$, 
\end{center}
for some $\sigma,\tau\in\N_0$. Here $(a|b):=$trace$(b^{\ast}a)$ with $b^{\ast}$ being the conjugate matrix of $b$. By setting $N=m^{\sigma+\tau}$ , we can identify $\F^s(\M,E)$ with $\F^s(\M)^N$. 
\smallskip\\
Throughout the rest of this paper, we always assume that 
\smallskip
\begin{mdframed}
\begin{itemize}
\item $({\M},g)$ is a uniformly regular Riemannian manifold.
\item $(\pi_{\kappa},\zeta_{\kappa})_{\kappa\in\mathfrak{K}}$ is a localization system subordinate to $\mathfrak{A}$.
\item $\sigma,\tau\in \N_0$, $V=V^{\sigma}_{\tau}:=\{T^{\sigma}_{\tau}\M, (\cdot|\cdot)_g\}$, $E=E^{\sigma}_{\tau}:=\{\K^{m^{\sigma}\times m^{\tau}},(\cdot|\cdot)\}$.
\end{itemize}
\end{mdframed}
\smallskip 
For $K\subset \M$, we put $\mathfrak{K}_{K}:=\{\kappa\in \mathfrak{K}: \Uk\cap K\neq\emptyset\}$. Then, given $\kappa\in\mathfrak{K}$,
\begin{align*}
\Xk:=
\begin{cases}
\R^m \hspace*{1em}\text{if }\kappa\in \mathfrak{K}\setminus \mathfrak{K}_{\partial\M},\\
\H^m \hspace*{1em}\text{otherwise,}
\end{cases}
\end{align*}
endowed with the Euclidean metric $g_m$.

Given $a\in \Gamma(\M,V)$ with local representation $\eqref{local}$
we define ${\kfk}a\in E$ by means of
$ {\kfk}a=[a^{(i)}_{(j)}]$,
where $[a^{(i)}_{(j)}]$ stands for the $(m^{\sigma}\times m^{\tau})$-matrix with entries $a^{(i)}_{(j)}$ in the $((i),(j))$ position, with $(i)$, $(j)$ arranged lexicographically.

 For the sake of brevity, we set $\boldsymbol{L}_{1,loc}(\X,E):=\prod_{\kappa}{L}_{1,loc}({\Xk},E)$. Then we introduce two linear maps for $\kappa\in\mathfrak{K}$:
\begin{center}
$\Rck:{L}_{1,loc}({\M},V)\rightarrow{L}_{1,loc}({\Xk},E)$, $u\mapsto{\psi_{\kappa}^{\ast}({\pk}u)}$,
\end{center}
and
\begin{center}
$\Rek:{L}_{1,loc}({\Xk},E)\rightarrow{L}_{1,loc}({\M},V)$, $v_{\kappa}\mapsto{\pk}{\kbk}v_{\kappa}$.
\end{center}
Here and in the following it is understood that a partially defined and compactly supported tensor field is automatically extended over the whole base manifold by identifying it to be zero  outside its original domain. Moreover, 
\begin{center}
$\Rc:{L}_{1,loc}({\M},V)\rightarrow\boldsymbol{L}_{1,loc}(\X,E)$, $u\mapsto{(\Rck u)_{\kappa}}$,
\end{center}
and
\begin{center}
$\Re:\boldsymbol{L}_{1,loc}(\X,E)\rightarrow{L}_{1,loc}({\M},V)$, $(v_{\kappa})_{\kappa}\mapsto{\sum_{\kappa}\Rek v_{\kappa}}$.
\end{center}
\medskip

\subsection{\bf H\"older and Little H\"older Spaces}
Before we study the H\"older and little H\"older spaces on uniformly regular Riemannian manifolds, we list some prerequisites for such spaces on $\X\in\{\R^m,\H^m\}$ from \cite{Ama12}. 

Throughout this subsection, we assume that $k\in{\N}_{0}$. For any given Banach space $F$, the Banach space $BC^{k}(\X,F)$ is defined by
\begin{align*}
BC^{k}(\X,F):=(\{u\in C^k(\X,F):\|u\|_{k,\infty}<\infty \},\|\cdot\|_{k,\infty}).
\end{align*} 
The closed linear subspace $BU\!C^k(\X,F)$ of $BC^{k}(\X,F)$ consists of all functions $u\in BC^{k}(\X,F)$ such that $\partial^{\alpha}u$ is uniformly continuous for all $|\alpha|\leq k$. Moreover,
\begin{align*}
BC^{\infty}(\X,F):=\bigcap_{k}BC^{k}(\X,F)=\bigcap_{k}BU\!C^{k}(\X,F).
\end{align*}
It is a Fr\'echet space equipped with the natural projective topology. 
\smallskip\\
For $0<s<1$, $0<\delta\leq\infty$ and $u\in F^{\X}$, the seminorm $[\cdot]^{\delta}_{s,\infty}$ is defined by
\begin{align*}
[u]^{\delta}_{s,\infty}:=\sup_{h\in(0,\delta)^m}\frac{\|u(\cdot+h)-u(\cdot)\|_{\infty}}{|h|^s}, \hspace*{1em}[\cdot]_{s,\infty}:=[\cdot]^{\infty}_{s,\infty}.
\end{align*}
Let $k<s<k+1$. The \textbf{H\"older} space $BC^{s}(\X,F)$ is defined as 
\begin{align*}
BC^{s}(\X,F):=(\{u\in BC^k(\X,F):\|u\|_{s,\infty}<\infty \},\|\cdot\|_{s,\infty}),
\end{align*}
where $\|u\|_{s,\infty}:=\|u\|_{k,\infty}+\max_{|\alpha|=k}[\partial^{\alpha} u]_{s-k,\infty}$.
\smallskip\\
The \textbf{little H\"older} space of order $s\geq 0$ is defined by
\begin{center}
$bc^s(\X,F):=$ the closure of $BC^{\infty}(\X,F)$ in $BC^{s}(\X,F)$.
\end{center}
By \cite[formula~(11.13), Corollary~11.2, Theorem~11.3]{Ama12}, we have 
\begin{align}
\label{section 2: bc^k=BUC^k}
bc^k(\X,F)=BU\!C^k(\X,F),
\end{align}
and for $k<s<k+1$
\begin{center}
$u\in BC^s(\X,F)$ belong to $bc^s(\X,F)$ iff $\lim\limits_{\delta\rightarrow 0}[\partial^{\alpha}u]^{\delta}_{s-[s],\infty}=0$, \hspace{1em} $|\alpha|=[s]$.
\end{center}
In the following context, let $F_{\kappa}$ be Banach spaces. Then we put $\boldsymbol{F}:=\prod_{\kappa}F_{\kappa}$. We denote by $l_{\infty}(\boldsymbol{F})$ the linear subspace of $\boldsymbol{F}$ consisting of all $\boldsymbol{x}=(x_{\kappa})$ such that
\begin{align*}
\|\boldsymbol{x}\|_{l_{\infty}(\boldsymbol{F})}:=\sup_{\kappa}\|x_{\kappa}\|_{F_{\kappa}}
\end{align*}
is finite. Then $l_{\infty}(\boldsymbol{F})$ is a Banach space with norm $\|\cdot\|_{l_{\infty}(\boldsymbol{E})}$.
\smallskip\\
For ${\F}\in \{bc,BC\}$, we put $\boldsymbol{\F}^s:=\prod_{\kappa}{\F}^s_{\kappa}$, where ${\F}^s_{\kappa}:={\F}^s(\Xk,E)$. Denote by 
\begin{align*}
l_{\infty,\uf}(\boldsymbol{bc}^k)
\end{align*}
the linear subspace of $l_{\infty}(\boldsymbol{BC}^k)$ of all $\boldsymbol{u}=(u_{\kappa})_{\kappa}$ such that $\partial^{\alpha}u_{\kappa}$ is uniformly continuous on $\Xk$ for $|\alpha|\leq k$, uniformly with respect to $\kappa\in\mathfrak{K}$. 
Similarly, for any $k<s<k+1$, we denote by 
\begin{align*}
l_{\infty,\uf}(\boldsymbol{bc}^s)
\end{align*}
the linear subspace of $l_{\infty,\uf}(\boldsymbol{bc}^k)$ of all $\boldsymbol{u}=(u_{\kappa})_{\kappa}$ such that 
\begin{align}
\label{S2: infnty,uf}
\lim\limits_{\delta\rightarrow 0}\max_{|\alpha|=k}[\partial^{\alpha}u_{\kappa}]^{\delta}_{s-k,\infty}=0, 
\end{align}
uniformly with respect to $\kappa\in\mathfrak{K}$. 
\begin{center}
$\f:\boldsymbol{F}^{\X}\rightarrow\prod_{\kappa} {F}_{\kappa}^{\X}$,\hspace{.5em} $u\mapsto \f(u):=(\text{\rm{pr}}_{\kappa}\circ u)_{\kappa}$
\end{center}
is a linear bijection. Set $E_{\kappa}:=E^{\sigma}_{\tau}$ and $\boldsymbol{E}:=\prod_{\kappa}E_{\kappa}$. Then \cite[Lemma~11.10,~11.11]{Ama12} tells us that for $s \geq 0$
\begin{align}
\label{section 2:Lis bc^s}
\f\in\Lis(bc^s(\X,l_{\infty}(\boldsymbol{E})),l_{\infty,\uf}(\boldsymbol{bc}^s)),
\end{align}
and for $s>0$, $s\notin\N$
\begin{align}
\label{section 2:Lis BC^s}
\f\in\Lis(BC^s(\X,l_{\infty}(\boldsymbol{E})),l_{\infty}(\boldsymbol{BC}^s)).
\end{align}

Now we are in a position to introduce the counterparts of these spaces on uniformly regular Riemannian manifolds. For $k\in{\N}_{0}$, we define
\begin{center}
$BC^{k}({\M},V):=(\{u\in{C^k({\M},V)}:\|u\|_{k,\infty}^{\M}<\infty\},\|\cdot\|_{k,\infty}^{\M})$,
\end{center}
where $\|u\|_{k,\infty}^{\M}:={\max}_{0\leq{i}\leq{k}}\||\nabla^{i}u|_{g}\|_{\infty}$.
\smallskip\\
We also set
\begin{align*}
BC^{\infty}({\M},V):=\bigcap_{k}BC^{k}({\M},V)
\end{align*}
endowed with the conventional projective topology. Then
\begin{center}
$bc^{k}(\M,V):=$ the closure of $BC^{\infty}$ in $BC^{k}$.
\end{center}
Let $k<s<k+1$. Now the {\textbf{H\"older} space} $BC^{s}(\M,V)$ is defined by
\begin{align*}
BC^{s}(\M,V):=(bc^{k}({\M},V),bc^{k+1}({\M},V))_{s-k,\infty}.
\end{align*}
Here $(\cdot,\cdot)_{\theta,\infty}$ is the real interpolation method, see \cite[Example I.2.4.1]{Ama95} and \cite[Definition~1.2.2]{Lunar95}. It is a Banach space by interpolation theory. For $s\geq 0$, we define the \textbf{little H\"older} spaces by 
\begin{center}
$bc^{s}({\M},V):=$ the closure of $BC^{\infty}({\M},V)$ in $BC^{s}({\M},V)$. 
\end{center}
\smallskip 
\begin{theorem}
\label{retraction of BCk}
Suppose $s\geq 0$. Then $\Re$ is a retraction 
\begin{equation*}
\text{from $l_{\infty}(\boldsymbol{BC}^{s})$ onto $BC^{s}({\M},V)$}, 
\end{equation*}
and
\begin{equation*}
\text{from $l_{\infty,\uf}(\boldsymbol{bc}^{s})$ onto $bc^{s}({\M},V)$.}
\end{equation*}
Moreover, $\Rc$ is a coretraction in both cases.
\end{theorem}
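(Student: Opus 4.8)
The plan is to reduce the statement on the manifold to the already-established isomorphism properties of $\f$ on Euclidean space via the commutative relationship between $\Re$, $\Rc$ and $\f$. Recall that to prove $\Re$ is a retraction onto $BC^s(\M,V)$ with coretraction $\Rc$, it suffices to exhibit bounded linear maps $\Rc\colon BC^s(\M,V)\to l_\infty(\boldsymbol{BC}^s)$ and $\Re\colon l_\infty(\boldsymbol{BC}^s)\to BC^s(\M,V)$ with $\Re\circ\Rc=\id_{BC^s(\M,V)}$; the identity $\Re\circ\Rc=\id$ is immediate from (L1), since $\sum_\kappa\pi_\kappa\kbk\kfk\pk u=\sum_\kappa\pi_\kappa^2 u=u$ (here one uses that $\zeta_\kappa\equiv 1$ on the support of $\kfk\pi_\kappa$, so $\kbk\kfk$ acts as the identity on $\pk u$ after the cut-off, and the extension-by-zero convention makes everything well-defined). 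So the real content is the two boundedness claims, and these are what I would obtain by transporting through $\f$.

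The key structural fact is that the local pull-back operators fit into a single Euclidean picture: the map sending $(v_\kappa)_\kappa$ to the tuple of its components realizes $l_\infty(\boldsymbol{BC}^s)$ and $l_{\infty,\uf}(\boldsymbol{bc}^s)$ as the images under $\f$ of $BC^s(\X,l_\infty(\boldsymbol E))$ and $bc^s(\X,l_\infty(\boldsymbol E))$ respectively, by \eqref{section 2:Lis bc^s} and \eqref{section 2:Lis BC^s}. Thus I would proceed as follows. First, handle the integer cases $s=k\in\N_0$ directly: boundedness of $\Rc$ on $BC^k(\M,V)$ follows from (R2)--(R4), (L3) and the chain/product rule, since $\nabla^i u$ transforms into $\partial^\alpha(\kfk(\pk u))$ modulo lower-order terms with coefficients controlled uniformly in $\kappa$ by the Christoffel symbols of $\psi_\kappa^\ast g$, which are bounded by (R4); conversely boundedness of $\Re$ uses the finite multiplicity $K$ from (R1) to sum the local pieces with only a factor depending on $K$, together with (L3) to control the derivatives falling on $\pk$. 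The uniform-continuity refinement gives the $bc^k = BUC^k$ statement via \eqref{section 2: bc^k=BUC^k}. Second, for non-integer $s$, I would invoke the definition $BC^s(\M,V)=(bc^{[s]}(\M,V),bc^{[s]+1}(\M,V))_{s-[s],\infty}$ and the analogous Euclidean interpolation identity, so that the retraction/coretraction property at the two integer endpoints passes to the interpolation space by the interpolation functor's compatibility with retractions (a retraction between Banach couples induces a retraction between the interpolation spaces — e.g. \cite[Example I.2.4.1]{Ama95} together with the standard fact that if $(\Rc,\Re)$ is a retraction pair for both endpoint couples then it is one for the interpolated spaces). For the little-Hölder statement at non-integer $s$ one combines this with the characterization of $l_{\infty,\uf}(\boldsymbol{bc}^s)$ via condition \eqref{S2: infnty,uf} and the density definition of $bc^s(\M,V)$, checking that $\Re$ maps the uniform-vanishing-seminorm class into the closure of $BC^\infty(\M,V)$ and $\Rc$ maps that closure back into $l_{\infty,\uf}(\boldsymbol{bc}^s)$.

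The main obstacle is the uniformity in $\kappa$ in the non-integer Hölder estimates: one must show that the Hölder seminorm $[\partial^\alpha(\kfk\pk u)]_{s-[s],\infty}$ over $\Xk$ is controlled by the global seminorm on $\M$ with a constant independent of $\kappa$, and this requires that the transition maps $\varphi_\eta\circ\psi_\kappa$ and the cut-offs $\pk$ have all derivatives bounded uniformly — precisely conditions (R2) and (L3) — so that difference quotients in the chart $\kappa$ can be compared with difference quotients in neighboring charts $\eta\in\mathfrak N(\kappa)$ without losing control, and the finite multiplicity (R1) keeps the number of such neighbors bounded. Once the integer cases are set up carefully with these uniform bounds, the passage to fractional $s$ is soft, being purely interpolation-theoretic, and the little Hölder statements follow by the density/seminorm characterizations already recorded in \eqref{section 2: bc^k=BUC^k} and \eqref{S2: infnty,uf}. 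I would therefore expect the write-up to spend most of its effort on the uniform-in-$\kappa$ chain-rule estimates for $\Rc$ and the finite-multiplicity summation for $\Re$ in the integer case, and to dispatch everything else by citing \eqref{section 2:Lis bc^s}, \eqref{section 2:Lis BC^s} and standard interpolation.
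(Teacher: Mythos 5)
The paper offers no proof of Theorem~\ref{retraction of BCk} beyond a citation to \cite[Theorem~6.3]{Ama13} and \cite[Theorems~12.1, 12.3]{Ama12}, so there is no in-paper argument to compare against line by line; your outline is a correct reconstruction of the argument underlying those cited results, namely the identity $\Re\circ\Rc=\id$ from (L1), uniform-in-$\kappa$ boundedness at integer levels via (R1)--(R4) and (L3), identification of the sequence spaces with vector-valued Euclidean spaces through $\f$ via \eqref{section 2:Lis bc^s} and \eqref{section 2:Lis BC^s}, interpolation for fractional $s$, and a density argument for the little H\"older scale. This is essentially the same (standard) approach, carried out rather than cited; the only step deserving explicit care in a full write-up is the identification of $(l_{\infty,\uf}(\boldsymbol{bc}^{k}),l_{\infty,\uf}(\boldsymbol{bc}^{k+1}))_{\theta,\infty}$ with $l_{\infty}(\boldsymbol{BC}^{k+\theta})$, which your transport-through-$\f$ remark correctly supplies.
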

\begin{proof}
See \cite[Theorem~6.3]{Ama13} and \cite[Theorem~12.1, 12.3, formula~(12.2)]{Ama12}. Note that for $s\notin\N_0$, $BC^{s}({\M},V)$ and $bc^{s}({\M},V)$ coincide with the spaces $B^s_{\infty}({\M},V)$ and $b^s_{\infty}({\M},V)$ defined in \cite[Section~12]{Ama12}.
\end{proof}

In the following proposition, $(\cdot,\cdot)^0_{\theta,\infty}$ and $[\cdot,\cdot]_{\theta}$ are the continuous interpolation method and the complex interpolation method, respectively. See \cite[Example I.2.4.2, I.2.4.4]{Ama95} for definitions. 
\begin{prop}
\label{interpolation of LH spaces}
Suppose that $0<\theta<1$, $0\leq{s}_0<s_1$ and $s=(1-\theta)s_0+\theta{s_1}$ with $s\notin\N$. Then
\begin{enumerate}
\item[(a)] $(BC^{s_0}({\M},V),BC^{s_1}({\M},V))_{\theta,\infty}\doteq BC^{s}({\M},V)\\
\doteq [BC^{s_0}({\M},V),BC^{s_1}({\M},V)]_{\theta}$,\hspace{.5em} for\hspace{.5em} $s_0,s_1\notin\N_0$.
\item[(b)] $(bc^{s_0}({\M},V),bc^{s_1}({\M},V))_{\theta,\infty}\doteq BC^{s}({\M},V)$,\hspace{.5em} for\hspace{.5em} $s_0,s_1\in\N_0$.
\item[(c)] $(bc^{s_0}({\M},V),bc^{s_1}({\M},V))_{\theta,\infty}^0\doteq bc^{s}({\M},V)\doteq [bc^{s_0}({\M},V),bc^{s_1}({\M},V)]_{\theta}$,\hspace{.5em} for\hspace{.5em} $s_0,s_1\notin\N_0$.
\item[(d)] $(bc^{s_0}({\M},V),bc^{s_1}({\M},V))^0_{\theta,\infty}\doteq bc^{s}({\M},V)$,\hspace{.5em} for\hspace{.5em} $s_0,s_1\in\N_0$.
\end{enumerate}
\end{prop}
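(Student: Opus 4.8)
The plan is to transfer the interpolation identities from the model spaces on $\X$ to the manifold $\M$ via the retraction-coretraction system $(\Re,\Rc)$ of Theorem~\ref{retraction of BCk}, combined with the known interpolation behavior of the spaces $l_\infty(\boldsymbol{BC}^s)$ and $l_{\infty,\uf}(\boldsymbol{bc}^s)$, which in turn come from the corresponding statements on $\R^m$ or $\H^m$ through the isomorphism $\f$ of \eqref{section 2:Lis bc^s}--\eqref{section 2:Lis BC^s}. The starting point is the standard fact that a retraction-coretraction pair induces interpolation: if $(\Re,\Rc)$ realizes a Banach space $X$ as a retract of $Y$, and $X_i$ is a retract of $Y_i$ by the \emph{same} maps for $i=0,1$, then $X_\theta = \Re(Y_\theta)$ for any interpolation functor, i.e. $(X_0,X_1)_\theta \doteq \Re\big((Y_0,Y_1)_\theta\big)$ with equivalent norms, and likewise for the continuous ($^0$) and complex methods since all of these are interpolation functors. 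This is the content of \cite[Theorem I.2.3.2]{Ama95}, which I would invoke.

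The key steps, in order: (1) record the Euclidean interpolation identities: for $s=(1-\theta)s_0+\theta s_1\notin\N$ one has $(BC^{s_0}(\X,F),BC^{s_1}(\X,F))_{\theta,\infty}\doteq BC^s(\X,F)\doteq[BC^{s_0},BC^{s_1}]_\theta$ when $s_0,s_1\notin\N_0$, $(bc^{s_0},bc^{s_1})_{\theta,\infty}\doteq BC^s(\X,F)$ when $s_0,s_1\in\N_0$, and $(bc^{s_0},bc^{s_1})^0_{\theta,\infty}\doteq bc^s(\X,F)\doteq[bc^{s_0},bc^{s_1}]_\theta$ when $s_0,s_1\notin\N_0$, plus the $\N_0$-endpoint version for the continuous method; these are in \cite{Ama12} (and classical, cf. \cite{Lunar95}). (2) Push these through $\f$ using \eqref{section 2:Lis bc^s} and \eqref{section 2:Lis BC^s}: since $\f$ is an isomorphism onto $l_{\infty,\uf}(\boldsymbol{bc}^s)$ or $l_\infty(\boldsymbol{BC}^s)$, and an isomorphism intertwines interpolation functors, one gets the same four families of identities with $BC^s(\X,\cdot)$, $bc^s(\X,\cdot)$ replaced by $l_\infty(\boldsymbol{BC}^s)$, $l_{\infty,\uf}(\boldsymbol{bc}^s)$; here one must check that $l_{\infty,\uf}(\boldsymbol{bc}^{s_i})$ really corresponds to $bc^{s_i}(\X,l_\infty(\boldsymbol{E}))$ on the nose, which is exactly what \eqref{section 2:Lis bc^s} asserts. (3) Apply the retraction lemma: Theorem~\ref{retraction of BCk} gives $BC^s(\M,V)$ as a retract of $l_\infty(\boldsymbol{BC}^s)$ and $bc^s(\M,V)$ as a retract of $l_{\infty,\uf}(\boldsymbol{bc}^s)$, all via the single pair $(\Re,\Rc)$ independent of $s$; feeding the endpoint identities into \cite[Theorem I.2.3.2]{Ama95} yields (a)--(d) directly. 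For the complex-method clauses in (a) and (c) one uses the same lemma with the complex functor, noting $\Re,\Rc$ are bounded on all the relevant spaces.

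One bookkeeping point deserves care: in (b) and (d) the endpoints $s_0,s_1$ are integers, so on the Euclidean side the relevant identity is $(bc^{s_i})=(BU\!C^{s_i})$ via \eqref{section 2: bc^k=BUC^k}, and the real interpolation space between two $BU\!C^k$'s with non-integer exponent $s$ is $BC^s$ (for $(\cdot,\cdot)_{\theta,\infty}$) resp. $bc^s$ (for $(\cdot,\cdot)^0_{\theta,\infty}$); these are the $s_0,s_1\in\N_0$ cases of the Euclidean statements in \cite{Ama12}, and must be matched with the definition $BC^s(\M,V):=(bc^k(\M,V),bc^{k+1}(\M,V))_{s-k,\infty}$ from the text above — so clause (b) with $s_0=k$, $s_1=k+1$ is essentially the definition, and the general $s_0,s_1\in\N_0$ case follows by reiteration.

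I expect the main obstacle to be purely organizational rather than deep: one must be scrupulous about \emph{which} interpolation functor is being used in each clause (real $(\cdot,\cdot)_{\theta,\infty}$, continuous $(\cdot,\cdot)^0_{\theta,\infty}$, complex $[\cdot,\cdot]_\theta$), about the integer/non-integer case distinctions for the endpoints (the continuous method genuinely needs $s_0,s_1\notin\N_0$ in (c) because $bc^k=BU\!C^k$ is not itself a continuous-interpolation space in the relevant sense, whereas (d) works for integer endpoints), and about the fact that the retraction pair $(\Re,\Rc)$ is the same for all exponents so that \cite[Theorem I.2.3.2]{Ama95} applies uniformly. No genuinely new estimate is needed; everything reduces to the Euclidean results of \cite{Ama12, Trib78, Lunar95}, the isomorphism $\f$, and the retraction theorem. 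A short reiteration argument (using that all four functors are exact of exponent $\theta$) handles the reduction from general $s_0<s_1$ to the case $s_1-s_0<1$ if one prefers to state the Euclidean inputs only for adjacent integers.
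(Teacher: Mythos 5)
Your argument is correct: the paper itself disposes of this proposition by citing \cite[Corollary~12.2~(iii),(iv), Corollary~12.4]{Ama12}, and the retract-plus-Euclidean-interpolation argument you give (Theorem~\ref{retraction of BCk}, the isomorphism $\f$ of \eqref{section 2:Lis bc^s}--\eqref{section 2:Lis BC^s}, and \cite[Proposition~I.2.3.2]{Ama95}) is exactly the mechanism underlying those cited corollaries and is the same mechanism the paper reuses explicitly in step~(iv) of the proof of Theorem~\ref{generator of analytic semigroup-tensor}. Your bookkeeping on the integer-endpoint cases and on which functor is used in each clause is also consistent with the definitions of $BC^s(\M,V)$ and $bc^s(\M,V)$ given in Section~2.2, so no gap remains.
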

\begin{proof}
See \cite[Corollary~12.2~(iii), (iv) and Corollary 12.4]{Ama12}. 
\end{proof}

\begin{prop}
\label{equivalence of function spaces on manifolds}
Suppose that $u\in{\F}^{s}({\M},V)$ for ${\F}\in \{bc,BC\}$. Then
\begin{center}
$\|\cdot\|_{{\F}^{s}}^{\M}\sim\|\Rc(\cdot)\|_{l_{\infty}(\boldsymbol{\F}^{s})}=\sup\limits_{\kappa} \|( \psi_{\kappa}^{\ast}(\pi_{\kappa}\cdot))_{\kappa}\|_{{\F}^{s}_{\kappa}}$,
\end{center}
\end{prop}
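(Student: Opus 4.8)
The plan is to derive the equivalence directly from the retraction-coretraction structure established in Theorem~\ref{retraction of BCk}. The key observation is that $\Re$ is a retraction from $l_\infty(\boldsymbol{\F}^s)$ onto $\F^s(\M,V)$ with coretraction $\Rc$, which in particular means $\Re\circ\Rc = \mathrm{id}$ on $\F^s(\M,V)$ and that both $\Re$ and $\Rc$ are bounded linear operators between the indicated spaces. From boundedness of $\Rc$ we immediately get $\|\Rc u\|_{l_\infty(\boldsymbol{\F}^s)} \leq c\,\|u\|_{\F^s}^{\M}$ for all $u\in\F^s(\M,V)$; from boundedness of $\Re$ together with the identity $u = \Re\Rc u$ we get $\|u\|_{\F^s}^{\M} = \|\Re\Rc u\|_{\F^s}^{\M} \leq c\,\|\Rc u\|_{l_\infty(\boldsymbol{\F}^s)}$. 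Combining the two inequalities yields $\|u\|_{\F^s}^{\M} \sim \|\Rc u\|_{l_\infty(\boldsymbol{\F}^s)}$, which is the first claimed equivalence.

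For the second, asserted identity it suffices to unravel the definitions of the operators and norms involved. By definition $\Rc u = (\Rck u)_\kappa = (\psi_\kappa^*(\pi_\kappa u))_\kappa$, and the $l_\infty(\boldsymbol{\F}^s)$-norm of a tuple $(v_\kappa)_\kappa$ is by definition $\sup_\kappa \|v_\kappa\|_{\F^s_\kappa}$ where $\F^s_\kappa = \F^s(\Xk,E)$. Hence
\begin{align*}
\|\Rc u\|_{l_\infty(\boldsymbol{\F}^s)} = \sup_\kappa \|\psi_\kappa^*(\pi_\kappa u)\|_{\F^s_\kappa},
\end{align*}
which matches the right-hand side of the asserted display (the tuple notation $(\psi_\kappa^*(\pi_\kappa\,\cdot))_\kappa$ there is just the argument of the $l_\infty$-norm). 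So this part is purely a matter of bookkeeping once one recalls that $\psi_\kappa^*(\pi_\kappa u)$ is regarded as an element of $E = E^\sigma_\tau$ via the identification $\psi_\kappa^* a = [a^{(i)}_{(j)}]$ given earlier, and that a compactly supported local tensor field is tacitly extended by zero, so $\psi_\kappa^*(\pi_\kappa u)$ indeed lives on all of $\Xk$.

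I would organize the write-up in two short steps: first the abstract retraction argument giving the norm equivalence, then the unwinding of definitions giving the explicit formula. The only point requiring a word of care is that Theorem~\ref{retraction of BCk} is stated for $l_\infty(\boldsymbol{BC}^s)$ and $l_{\infty,\uf}(\boldsymbol{bc}^s)$ rather than for a uniform $l_\infty(\boldsymbol{\F}^s)$; for $\F = BC$ the statement is literal, while for $\F = bc$ one uses that $bc^s(\M,V)$ is a retract of $l_{\infty,\uf}(\boldsymbol{bc}^s) \subseteq l_\infty(\boldsymbol{bc}^s)$ with coretraction $\Rc$ taking values in $l_{\infty,\uf}(\boldsymbol{bc}^s)$, and that the $l_\infty(\boldsymbol{bc}^s)$-norm restricted to this subspace is the one appearing in the proposition; the retraction estimate then gives the two-sided bound exactly as before. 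This is the only place a reader might pause, but it is not a genuine obstacle — there is no hard estimate to perform here, since all the analytic content has already been absorbed into Theorem~\ref{retraction of BCk}.
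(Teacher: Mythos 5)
Your proposal is correct and is essentially identical to the paper's proof: both derive the two-sided bound directly from the boundedness of $\Re$ and $\Rc$ together with the identity $u=\Re\Rc u$ from Theorem~\ref{retraction of BCk}. The extra remark on the $l_{\infty}$ versus $l_{\infty,\uf}$ distinction for $\F=bc$ is a sensible clarification but does not change the argument.
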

\begin{proof}
Since $\Re$ and $\Rc$ are continuous, there exists a constant $C>0$ such that for any $u\in{\F}^{s}(\M,V)$ we have $\frac{1}{C}\|\Rc u\|_{l_{\infty}(\boldsymbol{\F}^{s})}\leq\|u\|_{{\F}^{s}}^{\M}=\|\Re\Rc u\|_{{\F}^{s}}^{\M}\leq{C}\|\Rc u\|_{l_{\infty}(\boldsymbol{\F}^{s})}$.
\end{proof}

For a given $\kappa\in\mathfrak{K}$ and any $\eta\in\mathfrak{N}(\kappa)$, we define $S_{\eta\kappa}: E^{\X_{\eta}}\rightarrow  E^{\Xk}$ by
\begin{align*}
S_{\eta\kappa}:u\mapsto \varpi{\kfk}\pi_{\eta}{\kfk}\varphi_{\eta}^{\ast}u.
\end{align*}
\goodbreak
\begin{lem}
\label{coordinates transf}
Suppose that $\F\in\{bc,BC\}$. Then
\begin{center}
$S_{\eta\kappa}\in\L(\F^s_{\eta},\F^s_{\kappa})$,\hspace{1em} and $\|S_{\eta\kappa}\|\leq c$,
\end{center}
for $\eta\in\mathfrak{N}(\kappa)$. Here the positive constant $c$ is independent of $\kappa$ and $\eta\in\mathfrak{N}(\kappa)$. The statement still holds true with $\varpi$ replaced by $\zeta$ in the definition of $S_{\eta\kappa}$, or $\pi_{\eta}$ being replaced by $\zeta_{\eta}$.
\end{lem}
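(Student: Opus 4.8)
The plan is to reduce the estimate for $S_{\eta\kappa}$ to the mapping properties of its constituent pieces, each of which is either a fixed cut-off multiplication on a Euclidean space, or a pullback by the transition map $\varphi_\eta\circ\psi_\kappa$, all of which are uniformly controlled by the uniform-regularity axioms (R1)--(R2) and the localization-system conditions (L1)--(L3). Concretely, unravelling the definition $S_{\eta\kappa}u=\varpi\,\kfk\,\pi_\eta\,\kfk\,\varphi_\eta^\ast u$ — where I read $\kfk=\psi_\kappa^\ast$, so that $\kfk\varphi_\eta^\ast=(\varphi_\eta\circ\psi_\kappa)^\ast$ is a genuine pullback by a diffeomorphism between open subsets of $\R^m$ (or $\H^m$) — we see that $S_{\eta\kappa}$ is the composition of: (i) pullback $u\mapsto(\varphi_\eta\circ\psi_\kappa)^\ast u$ from $\F^s$ on (a neighborhood of) the relevant patch in $\X_\eta$ into $\F^s$ on the corresponding patch in $\Xk$; (ii) multiplication by $\psi_\kappa^\ast\pi_\eta$; and (iii) multiplication by the fixed function $\varpi$. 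So the proof is just: each of (i), (ii), (iii) is bounded on $\F^s$ with a bound independent of $\kappa$ and of $\eta\in\mathfrak N(\kappa)$, and boundedness is stable under composition.

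First I would record the pointwise-multiplication step. The function $\varpi$ is a fixed element of $\mathcal D(\Q,[0,1])$, so multiplication by $\varpi$ is bounded on $\F^s(\Xk,E)$ with a constant depending only on $\|\varpi\|_{\lceil s\rceil,\infty}$ and $s$ — this is the standard multiplier estimate for Hölder/little Hölder spaces, and the constant is the same for every $\kappa$ because $\varpi$ itself does not depend on $\kappa$. For the factor $\psi_\kappa^\ast\pi_\eta$: condition (L3) gives $\|\psi_\kappa^\ast\pi_\kappa\|_{k,\infty}\le c(k)$ uniformly; the needed bound on $\|\psi_\kappa^\ast\pi_\eta\|_{k,\infty}$ for $\eta\in\mathfrak N(\kappa)$ follows by writing $\psi_\kappa^\ast\pi_\eta=(\psi_\eta^\ast\pi_\eta)\circ(\varphi_\eta\circ\psi_\kappa)$ and combining (L3) for $\eta$ with the chain rule and the uniform $C^k$-bounds (R2) on the transition maps $\varphi_\eta\circ\psi_\kappa$ — this is a Faà di Bruno bookkeeping that I would not write out, noting only that it yields a bound $c(k)$ independent of $\kappa,\eta$. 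Hence multiplication by $\psi_\kappa^\ast\pi_\eta$ is bounded on $\F^s$ with a uniform constant, again by the multiplier theorem.

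Next is the pullback step, which is the crux. I need: if $\Phi:=\varphi_\eta\circ\psi_\kappa$, then $u\mapsto\Phi^\ast u$ maps $\F^s$ (on the source patch) to $\F^s$ (on the target patch) boundedly, uniformly in $\kappa,\eta$. For integer $s=k$ this is the chain rule plus (R2): every derivative of $\Phi^\ast u$ of order $\le k$ is a universal polynomial in the derivatives of $\Phi$ of order $\le k$ (bounded by $c(k)$ via (R2)) times derivatives of $u$ of order $\le k$, so $\|\Phi^\ast u\|_{k,\infty}\le c(k)\|u\|_{k,\infty}$; uniform continuity of the top derivatives is preserved because $\Phi$ and its derivatives are (uniformly) uniformly continuous, so the $l_{\infty,\mathrm{unif}}$/little-Hölder membership persists. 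For fractional $s=k+\theta$ one needs in addition that $\Phi$ is uniformly bi-Lipschitz on the relevant set — equivalently, that $\|D\Phi\|_\infty$ and $\|D\Phi^{-1}\|_\infty=\|D(\varphi_\kappa\circ\psi_\eta)\|_\infty$ are both $\le c$, which is exactly (R2) applied to $\Phi$ and to its inverse (since $\eta\in\mathfrak N(\kappa)$ iff $\kappa\in\mathfrak N(\eta)$) — so that $|\Phi(x)-\Phi(y)|\sim|x-y|$ and the Hölder seminorm transforms with a uniform constant; again the little-Hölder condition \eqref{S2: infnty,uf} is inherited. Alternatively, rather than estimating on fractional-order spaces directly, I would invoke Proposition~\ref{interpolation of LH spaces}: prove the bound on $\F^k$ and $\F^{k+1}$ and interpolate, which sidesteps the fractional seminorm computation entirely. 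One technical point to handle cleanly: the various cut-offs ($\varpi$, $\zeta$, the supports of $\pi_\eta$, $\kfk$-transported objects) guarantee that $S_{\eta\kappa}u$ is compactly supported well inside $\Qk$, so $\Phi$ only needs to be controlled on a compact piece where it is genuinely a diffeomorphism onto its image and all the above applies; this also makes the zero-extension convention harmless.

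Finally I would assemble: $S_{\eta\kappa}$ is the composite of three maps, each bounded on $\F^s$ with constant independent of $\kappa$ and of $\eta\in\mathfrak N(\kappa)$, hence $S_{\eta\kappa}\in\L(\F^s_\eta,\F^s_\kappa)$ with $\|S_{\eta\kappa}\|\le c$. The last sentence of the lemma (replacing $\varpi$ by $\zeta$, or $\pi_\eta$ by $\zeta_\eta$) is immediate from the same argument, since $\zeta\in\mathcal D(\Q,[0,1])$ plays exactly the role of $\varpi$, and $\zeta_\eta=\kbk[\eta]\zeta$ is, like $\pi_\eta$, a cut-off whose $\psi_\kappa$-pullback is uniformly $C^k$-bounded by (R2) together with the fact that $\zeta$ is fixed. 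The main obstacle is purely the pullback-on-fractional-Hölder-spaces estimate with uniform constants, and the cleanest route is to prove it on integer orders and interpolate via Proposition~\ref{interpolation of LH spaces}, or equivalently cite the Euclidean coordinate-change invariance of $\F^s$ spaces applied on the relevant compactly-contained patch; everything else is routine chain-rule and multiplier bookkeeping organized around (R2), (L1)--(L3).
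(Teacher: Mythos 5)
Your argument is correct and follows essentially the same route as the paper's (very terse) proof: handle integer orders by combining the chain rule, the pointwise multiplier estimate, (R2) and (L3) to get uniform bounds, then fill in the fractional orders by interpolation. The only point you gloss over is that for tensor-valued $E$ the pullback $\kfk\varphi_\eta^\ast$ also multiplies the components by transition Jacobians, but these are uniformly $BC^k$-bounded by (R2), so this folds into the same multiplication-plus-chain-rule bookkeeping.
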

\begin{proof}
The case that $s\in\N_0$ follows from the point-wise multiplication result on $\X$, the chain rule, (R2) and (L3). The gaps left can be filled in by interpolation theory.  
\end{proof}

An alternative of the spaces ${\F}^s({\M})$ can be defined as follows:
\begin{center}
$\breve{\F}^s({\M}):=(\{u\in{L_{1,loc}({\M})}:{\kfk}(\pi^2_{\kappa}u)\in{\F}^s\},\|\cdot\|_{\breve{\F}^s({\M})})$, 
\end{center}
where ${\F}\in \{bc,BC\}$ and $\|u\|_{\breve{\F}^s({\M})}:=\|{\kfk}(\pi^2_{\kappa}u)\|_{l_{\infty}(\boldsymbol{\F}^s)}$. 
\begin{prop}
\label{equivalence definition}
$\breve{\F}^s({\M})\doteq {\F}^s({\M})$.
\end{prop}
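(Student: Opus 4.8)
The plan is to show the two norms $\|\cdot\|_{\breve{\F}^s(\M)}$ and $\|\cdot\|_{\F^s}^\M$ are equivalent, from which $\breve{\F}^s(\M)\doteq\F^s(\M)$ follows. By Proposition \ref{equivalence of function spaces on manifolds} it suffices to compare $\|\cdot\|_{\breve{\F}^s(\M)}$ with $\|\Rc(\cdot)\|_{l_\infty(\boldsymbol{\F}^s)}$, i.e. with $\sup_\kappa\|\psi_\kappa^\ast(\pi_\kappa u)\|_{\F^s_\kappa}$. The only difference between the two is that one localizes with $\pi_\kappa^2$ and the other with $\pi_\kappa$; the key point is that each can be recovered from the other by composing with a uniformly bounded family of ``coordinate-patch'' multiplication operators, which is exactly the content of Lemma \ref{coordinates transf}.

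First I would fix $u\in L_{1,loc}(\M)$ and pass to local coordinates. Set $u_\kappa:=\psi_\kappa^\ast(\pi_\kappa u)=\kfk(\pi_\kappa u)$ and $v_\kappa:=\kfk(\pi_\kappa^2 u)$. Since $\zeta$ (and hence $\zeta_\kappa$) equals $1$ on $\supp(\kfk\pi_\kappa)$ by (L2), and $\varpi$ equals $1$ on $\supp(\zeta)$ by \eqref{section 1: biggest cut-off fn}, I can insert these cut-offs freely: in a single chart $v_\kappa=\kfk(\pi_\kappa)\cdot u_\kappa$ essentially (up to identifying $\kfk\pi_\kappa$ with its coordinate expression), and conversely $u_\kappa=\kfk(\pi_\kappa)\cdot(\text{something involving }v_\kappa)$ is less direct because dividing by $\pi_\kappa$ is not allowed. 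The cleaner route is to express both $u_\kappa$ and $v_\kappa$ as images under the operators $S_{\eta\kappa}$ of Lemma \ref{coordinates transf}: for $\eta\in\mathfrak N(\kappa)$ the patch overlap structure (with finite multiplicity (R1) and the derivative bounds (R2), (L3)) lets me write, schematically, $v_\kappa = \varpi_\kappa\,\kfk\pi_\kappa\,\sum_{\eta\in\mathfrak N(\kappa)}\kbk\varphi_\eta^\ast u_\eta$ and $u_\kappa$ in a similar form with the roles adjusted, each sum being finite with at most $K$ terms.

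The key steps, in order: (1) Using (L1)--(L2) and \eqref{section 1: biggest cut-off fn}, write $\pi_\kappa^2 u = \pi_\kappa\cdot\pi_\kappa u$ and transport to coordinates to get $v_\kappa = (\kfk\pi_\kappa)\,u_\kappa$ after inserting the cut-off $\zeta$ which is identically $1$ on the relevant support; then point-wise multiplication on $\X$ (the multiplier $\kfk\pi_\kappa$ being bounded in $\F^s_\kappa$ uniformly in $\kappa$ by (L3) and the multiplication theorem) gives $\|v_\kappa\|_{\F^s_\kappa}\le c\,\|u_\kappa\|_{\F^s_\kappa}$ uniformly, hence $\|u\|_{\breve{\F}^s(\M)}\le c\,\|\Rc u\|_{l_\infty(\boldsymbol{\F}^s)}\le c'\|u\|_{\F^s}^\M$. (2) For the reverse inequality, observe $\pi_\kappa u = \zeta_\kappa\,\pi_\kappa u$ (since $\zeta_\kappa\equiv1$ where $\kfk\pi_\kappa$ lives), and on the overlap write $\pi_\kappa u = \pi_\kappa\,\varpi_\kappa\sum_{\eta\in\mathfrak N(\kappa)}\pi_\eta^2 u$ using that $(\pi_\eta^2)_\eta$ is a partition of unity (L1) and that on $\supp\pi_\kappa$ only the $\eta\in\mathfrak N(\kappa)$ contribute; transporting to coordinates, $u_\kappa = \sum_{\eta\in\mathfrak N(\kappa)} (\kfk\pi_\kappa)\,S_{\eta\kappa}' v_\eta$ where $S_{\eta\kappa}'$ is the transition-and-multiplication operator from Lemma \ref{coordinates transf} (in the variant with $\pi_\eta$ replaced by $\pi_\eta^2$, which the lemma explicitly allows after one more application of point-wise multiplication). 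By Lemma \ref{coordinates transf} each term is bounded in $\F^s_\kappa$ by $c\,\|v_\eta\|_{\F^s_\eta}$, and the sum has at most $K$ terms by (R1), so $\|u_\kappa\|_{\F^s_\kappa}\le cK\sup_\eta\|v_\eta\|_{\F^s_\eta}$, whence $\|\Rc u\|_{l_\infty(\boldsymbol{\F}^s)}\le cK\,\|u\|_{\breve{\F}^s(\M)}$, and then $\|u\|_{\F^s}^\M\le C\|u\|_{\breve{\F}^s(\M)}$ by Proposition \ref{equivalence of function spaces on manifolds} again. (3) Finally, the two norm estimates together give $\breve{\F}^s(\M)\doteq\F^s(\M)$, and in the $bc$ case one checks that the density characterizations match up since the localization operators map $BC^\infty$ into $BC^\infty$.

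The main obstacle I expect is bookkeeping rather than anything deep: correctly tracking which cut-off ($\pi_\kappa$, $\zeta_\kappa$, $\varpi_\kappa$) dominates which on overlapping patches so that inserting or removing a cut-off is legitimate, and making sure the constants in the point-wise multiplication and coordinate-transition estimates are genuinely uniform in $\kappa$ and $\eta\in\mathfrak N(\kappa)$ — this is precisely where (R1)--(R4), (L1)--(L3) and the uniform multiplier bounds all have to be invoked in concert. The non-integer $s$ case is handled throughout by interpolation (as in the proof of Lemma \ref{coordinates transf}), so no separate argument for H\"older exponents is needed beyond citing the interpolation results of Proposition \ref{interpolation of LH spaces}.
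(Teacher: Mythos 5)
Your proposal is correct and follows essentially the same route as the paper: the easy direction via point-wise multiplication by $\kfk\pi_{\kappa}$ with the uniform bound from (L3), and the reverse direction by inserting the partition of unity $\sum_{\eta}\pi_{\eta}^{2}=1$, transporting across charts with the operators of Lemma~\ref{coordinates transf}, and using finite multiplicity (R1) to control the sum over $\eta\in\mathfrak{N}(\kappa)$, with Proposition~\ref{equivalence of function spaces on manifolds} closing the loop between $\|\Rc(\cdot)\|_{l_{\infty}(\boldsymbol{\F}^{s})}$ and $\|\cdot\|_{\F^{s}}^{\M}$. The bookkeeping of cut-offs you flag as the main concern is exactly what the paper's two displayed inequality chains carry out.
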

\begin{proof}
The proof is straightforward. Indeed,
\begin{align*}
\|u\|_{\breve{\F}^s({\M})}=\|{\kfk}(\pi^2_{\kappa}u)\|_{l_{\infty}(\boldsymbol{\F}^s)}\leq{M\|\Rck u\|_{l_{\infty}(\boldsymbol{\F}^s)}}\leq{M\|u\|_{{\F}^s}^{\M}}.
\end{align*}
To obtain the other direction, we adopt Lemma~\ref{coordinates transf} to compute
\begin{align*}
\|\Rck u\|_{{\F}^s_{\kappa}}&\leq\sum_{\eta\in\mathfrak{N}(\kappa)}\|{\kfk}\pi_{\kappa}\varpi{\kfk}({\zeta_{\eta}}\pi_{\eta}^2{u})\|_{{\F}^s_{\kappa}}\leq{M}\sum_{\eta\in\mathfrak{N}(\kappa)}\|\psi_{\eta}^{\ast}(\pi_{\eta}^2{u})\|_{{\F}^s_{\eta}}\leq{M}\|u\|_{\breve{\F}^s({\M})}.
\end{align*}
\end{proof}
\medskip

\subsection{\bf Basic Properties}
In this subsection, we will list some basic properties of the function spaces and tensor fields introduced in the previous subsections. These properties are well known to be enjoyed by their counterparts in Euclidean spaces or on domains with smooth boundary. By using the interpolation and retraction properties set up above, we can verify them on a uniformly regular Riemannian manifold $\M$. 
\begin{prop}
\label{embedding theory}
${\F}^{t}({\M},V)\overset{d}{\hookrightarrow} bc^{s}({\M},V)$, where $t>s\geq 0$ and $\F\in\{bc,BC\}$.
\end{prop}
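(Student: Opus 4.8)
The plan is to reduce the embedding on $\M$ to the corresponding embedding on the model spaces $\X_\kappa\in\{\R^m,\H^m\}$ by exploiting the retraction-coretraction system from Theorem~\ref{retraction of BCk} together with the norm equivalence in Proposition~\ref{equivalence of function spaces on manifolds}. The key point is that the Euclidean-space embeddings $BC^t(\Xk,E)\hookrightarrow bc^s(\Xk,E)$ for $t>s\ge 0$ are classical (and hold with embedding constant independent of $\kappa$, since there are only two model spaces $\R^m$ and $\H^m$); moreover, for the \emph{dense} part one uses that $BC^\infty(\Xk,E)$ is dense in $bc^s(\Xk,E)$ by definition of the little H\"older spaces.

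First I would treat the continuity of the embedding. Given $u\in\F^t(\M,V)$, write $u=\Re\Rc u$ and estimate, using Proposition~\ref{equivalence of function spaces on manifolds},
\begin{align*}
\|u\|_{bc^s}^{\M}\sim \sup_\kappa\|\psi_\kappa^\ast(\pi_\kappa u)\|_{bc^s_\kappa}
\le C\sup_\kappa\|\psi_\kappa^\ast(\pi_\kappa u)\|_{\F^t_\kappa}
\sim \|u\|_{\F^t}^{\M},
\end{align*}
where the middle inequality is the uniform (in $\kappa$) Euclidean embedding $\F^t_\kappa\hookrightarrow bc^s_\kappa$; one must check here that $\Rc u$ indeed lies in $l_{\infty,\uf}(\boldsymbol{bc}^s)$ and not merely in $l_\infty(\boldsymbol{bc}^s)$ — that is, that the little-H\"older modulus condition \eqref{S2: infnty,uf} holds uniformly in $\kappa$ — which follows because $\Rc u\in l_\infty(\boldsymbol{\F}^t)$ with $t>s$ forces a uniform gain in the relevant seminorms. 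This gives the bounded inclusion $\F^t(\M,V)\hookrightarrow bc^s(\M,V)$.

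For density, I would argue that $BC^\infty(\M,V)$ is dense in $bc^s(\M,V)$ (this is essentially the definition of $bc^s$) and that $BC^\infty(\M,V)\subseteq \F^t(\M,V)$; since $\F^t(\M,V)$ sits between $BC^\infty(\M,V)$ and $bc^s(\M,V)$ in the $bc^s$-topology, its closure in $bc^s(\M,V)$ is all of $bc^s(\M,V)$. Alternatively, one can transport density through the retraction: $\Re$ maps a dense subspace of $l_{\infty,\uf}(\boldsymbol{bc}^t)$ (namely smooth, suitably decaying families) onto a dense subspace of $bc^t(\M,V)$, and these are smooth on $\M$, hence lie in $\F^t(\M,V)$.

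The main obstacle I anticipate is the uniformity in $\kappa$ throughout: verifying that the Euclidean embedding constant can be chosen independent of $\kappa$ (immediate, since $\Xk$ is one of only two fixed spaces), and more delicately that membership of $\Rc u$ in $l_{\infty,\uf}(\boldsymbol{bc}^s)$ — the uniform vanishing of the H\"older moduli in \eqref{S2: infnty,uf} — genuinely follows from $\boldsymbol{\F}^t$-boundedness with $t>s$; this is where one invokes the standard interpolation-type estimate $[\,\cdot\,]^\delta_{s-[s],\infty}\lesssim \delta^{\,t-s}\,\|\cdot\|_{\F^t}$ (valid uniformly on $\R^m$ and $\H^m$) to get the required uniform decay as $\delta\to0$.
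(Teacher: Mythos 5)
Your argument is correct, but for the continuity of the inclusion you take a genuinely different route from the paper. The paper's proof is a one-liner: the embedding $\F^{t}(\M,V)\hookrightarrow bc^{s}(\M,V)$ is read off from the interpolation identities of Proposition~\ref{interpolation of LH spaces} (realizing both spaces as real/continuous interpolation spaces between little H\"older spaces of integer order and using the standard monotonicity of interpolation functors in $\theta$), and density then follows, exactly as in your second paragraph, from $BC^{\infty}(\M,V)\subseteq\F^{t}(\M,V)\subseteq bc^{s}(\M,V)$ together with the dense embedding $BC^{\infty}(\M,V)\overset{d}{\hookrightarrow}bc^{s}(\M,V)$. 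You instead push everything down to the model spaces $\Xk$ via Theorem~\ref{retraction of BCk}, which forces you to verify the only genuinely delicate point of that route, namely that $\Rc u\in l_{\infty,\uf}(\boldsymbol{bc}^{s})$ (the uniform-in-$\kappa$ decay \eqref{S2: infnty,uf} of the H\"older moduli); your estimate $[\partial^{\alpha}\cdot]^{\delta}_{s-[s],\infty}\lesssim\delta^{\,\min(t,[s]+1)-s}\|\cdot\|_{t,\infty}$ does supply this, and the embedding constants are trivially uniform since there are only two model spaces. One presentational caveat: the chain of equivalences you display invokes Proposition~\ref{equivalence of function spaces on manifolds} for the $bc^{s}$-norm of $u$ before membership $u\in bc^{s}(\M,V)$ is known; the clean order is first to establish $\Rc u\in l_{\infty,\uf}(\boldsymbol{bc}^{s})$, then conclude $u=\Re\Rc u\in bc^{s}(\M,V)$ with the norm bound from the retraction. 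The interpolation proof is shorter and avoids the modulus computation entirely; your localization proof is more self-contained and makes the mechanism (uniform Euclidean embeddings) explicit. Both are valid.
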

\begin{proof}
This result is a direct consequence of interpolation theory and the dense embedding $BC^{\infty}(\M,V)\overset{d}{\hookrightarrow}bc^s(\M,V)$.
\end{proof}

Let $V_j=V^{\sigma_j}_{\tau_j}:=\{T^{\sigma_j}_{\tau_j}\M,(\cdot|\cdot)_g\}$ with $j=1,2,3$ be $\K$-valued tensor bundles on $\M$. By bundle multiplication from $V_1\times V_2$ into $V_3$, denoted by
\begin{center}
${\mathsf{m}}: V_1\times V_2\rightarrow V_3$,\hspace{1em} $(v_1,v_2)\mapsto {\mathsf{m}}(v_1,v_2)$,
\end{center}
we mean a smooth bounded section $\mathfrak{m}$ of $\Hom(V_1\otimes V_2,V_3)$, i.e., 
\begin{align}
\label{section 2: bundle multiplication}
\mathfrak{m}\in BC^{\infty}(\M, \text{Hom}(V_1\otimes V_2,V_3)), 
\end{align}
such that $\mathsf{m}(v_1,v_2):=\mathfrak{m}(v_1\otimes v_2)$. \eqref{section 2: bundle multiplication} implies that  for some $c>0$
\begin{center}
$|{\mathsf{m}}(v_1,v_2)|_g \leq c|v_1|_g |v_2|_g$,\hspace{1em} $v_i\in \Gamma(\M,V_i)$ with $i=1,2$.
\end{center}
Its point-wise extension from $\Gamma(\M,V_1\oplus V_2)$ into $\Gamma(\M,V_3)$ is defined by:
\begin{align*}
\mathsf{m}(v_1,v_2)(p):=\mathsf{m}(p)(v_1(p),v_2(p))
\end{align*}
for $v_i\in \Gamma(\M,V_i)$ and $p\in\M$. We still denote it by ${\mathsf{m}}$. We can also prove the following point-wise multiplier theorem for function spaces over uniformly regular Riemannian manifolds.

\begin{prop}
\label{pointwise multiplication properties}
Let $k\in{\N}_{0}$, and $V_j=V^{\sigma_j}_{\tau_j}:=\{T^{\sigma_j}_{\tau_j}\M,(\cdot|\cdot)_g\}$ with $j=1,2,3$ be tensor bundles. Suppose that $\mathsf{m}:V_1\times V_2\rightarrow V_3$ is a bundle multiplication. Then $[(v_1,v_2)\mapsto \mathsf{m}(v_1,v_2)]$ is a bilinear and continuous map for the following spaces:
\begin{center}
${\F}^{s}({\M},V_1)\times{\F}^{s}({\M},V_2)\rightarrow{\F}^{s}({\M},V_3)$, where $s\geq{0}$ and ${\F}\in\{bc,BC\}$.
\end{center}
\end{prop}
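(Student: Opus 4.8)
The natural strategy is to transfer the multiplication to the model spaces on $\X$ via the retraction-coretraction system of Theorem~\ref{retraction of BCk}, where point-wise multiplication is classical, and then patch back. First I would use Proposition~\ref{equivalence definition} to replace $\F^s(\M,V_j)$ by the equivalent spaces $\breve{\F}^s(\M,V_j)$, so that the relevant norm of a section $v_j$ is $\sup_\kappa \|\kfk(\pi_\kappa^2 v_j)\|_{\F^s_\kappa}$. The product $\mathsf{m}(v_1,v_2)$ has, in the chart $\kappa$, the local expression $\kfk\bigl(\pi_\kappa^2\,\mathsf{m}(v_1,v_2)\bigr)$, which by the module property of $\psi_\kappa^\ast$ and the definition of $\mathsf{m}$ as contraction against $\mathfrak m\in BC^\infty(\M,\Hom(V_1\otimes V_2,V_3))$ can be written (using $\varpi_\kappa\equiv 1$ on the relevant supports, cf.\ \eqref{section 1: biggest cut-off fn} and (L2)) as a finite sum of terms of the shape $\bigl(\psi_\kappa^\ast\mathfrak m_\kappa\bigr)\cdot\bigl(\kfk(\pi_\kappa^2 v_1)\bigr)\cdot\bigl(\kfk(\varpi_\kappa v_2)\bigr)$, i.e.\ a bounded-coefficient bilinear contraction of the local representatives, all taking place in $\F^s(\Xk,E)$-type spaces.

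The key local ingredient is the point-wise multiplication theorem on $\X\in\{\R^m,\H^m\}$: for $s\ge 0$ and $\F\in\{bc,BC\}$, multiplication $\F^s(\X)\times\F^s(\X)\to\F^s(\X)$ is continuous, and more generally $BC^\infty(\X)\cdot\F^s(\X)\to\F^s(\X)$ is continuous with a bound controlled by finitely many sup-norms of derivatives of the $BC^\infty$ factor. For $s\in\N_0$ this is the Leibniz rule; for $k<s<k+1$ it follows from the difference-quotient characterization of $BC^s$ and $bc^s$ recorded after \eqref{section 2: bc^k=BUC^k}, and the general case can be obtained by interpolation (Proposition~\ref{interpolation of LH spaces} on the manifold side, or the corresponding Euclidean statements). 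I would apply this with the $BC^\infty$ factor being $\psi_\kappa^\ast\mathfrak m_\kappa$ (whose derivatives are bounded uniformly in $\kappa$ by \eqref{section 2: bundle multiplication}, (R2), and (L3)), so that each local term is estimated by
\begin{equation*}
\|\kfk(\pi_\kappa^2\,\mathsf{m}(v_1,v_2))\|_{\F^s_\kappa}
\le C\,\|\kfk(\pi_\kappa^2 v_1)\|_{\F^s_\kappa}\,\|\kfk(\varpi_\kappa v_2)\|_{\F^s_\kappa},
\end{equation*}
with $C$ independent of $\kappa$.

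Finally I would take the supremum over $\kappa$. The factor $\|\kfk(\pi_\kappa^2 v_1)\|_{\F^s_\kappa}$ is bounded by $\|v_1\|_{\breve\F^s(\M,V_1)}$ directly; for the factor involving $\varpi_\kappa$ one uses Lemma~\ref{coordinates transf} (with the variants allowing $\zeta$ and $\zeta_\eta$, hence also $\varpi$, in place of $\pi$), together with the finite multiplicity (R1): writing $\kfk(\varpi_\kappa v_2)$ as a sum over $\eta\in\mathfrak N(\kappa)$ of terms $S_{\eta\kappa}$ applied to $\psi_\eta^\ast(\pi_\eta^2 v_2)$, the number of summands is bounded by $K$ and each has norm $\le c\,\|v_2\|_{\breve\F^s(\M,V_2)}$. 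Combining gives $\|\mathsf{m}(v_1,v_2)\|_{\F^s(\M,V_3)}\le C'\,\|v_1\|_{\F^s(\M,V_1)}\|v_2\|_{\F^s(\M,V_2)}$, and bilinearity is clear. The main obstacle is bookkeeping: making sure the cut-off functions $\pi_\kappa,\zeta_\kappa,\varpi_\kappa$ are arranged so that inserting $\varpi_\kappa\equiv 1$ on the support of $\kfk(\pi_\kappa^2 v_j)$ is legitimate, that all constants are genuinely uniform in $\kappa$ (this is where (R2), (L3), and finite multiplicity enter), and that the non-integer case is reduced cleanly to the integer case by interpolation rather than re-proving Hölder estimates by hand.
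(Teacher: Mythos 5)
Your proposal is correct and follows essentially the same route as the paper's proof: localize $\mathsf{m}$ to the charts with uniformly bounded coefficients, apply the Euclidean point-wise multiplication estimate (integer case by Leibniz, fractional case by the difference-quotient seminorm), control the overlap term involving $\varpi$ via Lemma~\ref{coordinates transf} and finite multiplicity, and patch back through the retraction--coretraction system. The only cosmetic differences are that the paper works with $\Rck$ rather than the $\breve{\F}^s$ norms and finishes the $bc$ case by a density argument instead of tracking the uniform $\delta$-seminorms directly; both are sound.
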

\begin{proof}
This assertion follows from \cite[Theorem~13.5]{Ama12}, wherein point-wise multiplication results for anisotropic function spaces are presented. For the reader's convenience, we will state herein a brief proof for 
the isotropic case.  
\smallskip\\
(i) Let $(M,g)=(\Xk,g_m)$. Set $E_j=E^{\sigma_j}_{\tau_j}:=\{\K^{m^{\sigma_j}\times m^{\tau_j}},(\cdot|\cdot)\}$ with $j=1,2,3$. Suppose $\mathsf{b}_e\in\L(E_1,E_2;E_3)$, the space of all continuous bilinear maps: $E_1\times E_2\rightarrow E_3$, and denote its point-wise extension by $\mathsf{m}_e$. The case $s\in\N$ follows from the product rule. For the same reason, it suffices to prove the case $0<s<1$. One may check that 
\begin{align}
\label{section 2: Holder/bundle}
[\mathsf{m}_e(v_1,v_2)]^{\delta}_{s,\infty}\leq c ([v_1]^{\delta}_{s,\infty}\|v_2\|_{\infty}+[v_2]^{\delta}_{s,\infty}\|v_1\|_{\infty})
\end{align}
for $v_i\in E^{\Xk}_i$. Now it is an immediate consequence of the continuity of $\mathsf{b}_e$ that
\begin{align*}
\mathsf{m}_e\in \L({\F}^s(\Xk,E_1),{\F}^s(\Xk,E_2);{\F}^s(\Xk,E_3)).
\end{align*}
(ii) Suppose that $\mathsf{b}_e\in {\F}^{s}(\Xk,\L(E_1,E_2;E_3))$ and denote its point-wise extension by $\mathsf{m}_e$, i.e.,
\begin{align*}
\mathsf{m}_e:E^{\Xk}_1 \times E^{\Xk}_2\rightarrow E^{\Xk}_3: \hspace*{.5em}(v_1,v_2)\mapsto \mathsf{b}_e(x)(v_1(x),v_2(x)).
\end{align*}
Consider the multiplications:
\begin{align*}
\mathsf{b}_1\in \L(\L(E_1,E_2;E_3), E_1; \L(E_2,E_3)),\hspace*{.5em} (f,v_1)\mapsto f(v_1,\cdot),
\end{align*}
and
\begin{align*}
\mathsf{b}_2\in \L(\L(E_2;E_3), E_2; E_3),\hspace*{.5em} (g,v_2)\mapsto g(v_2),
\end{align*}
where $v_i\in E_i$. Denote by $\mathsf{m}_i$ the point-wise extension of $\mathsf{b}_i$. Then by step (i), we deduce that
\begin{align*}
\mathsf{m}_1 \in \L({\F}^{s}(\Xk,\L(E_1,E_2;E_3)), {\F}^{s}(\Xk,E_1); {\F}^{s}(\Xk,\L(E_2,E_3))),
\end{align*}
and
\begin{align*}
\mathsf{m}_2 \in \L({\F}^{s}(\Xk,\L(E_2,E_3)), {\F}^{s}(\Xk,E_2); {\F}^{s}(\Xk,E_3)).
\end{align*}
Since $\mathsf{m}_e(v_1,v_2)=\mathsf{m}_2 ( \mathsf{m}_1(\mathsf{b}_e,v_1),v_2)$, it yields
\begin{align*}
\mathsf{m}_e\in \L({\F}^{s}(\Xk,E_1),{\F}^{s}(\Xk,E_2);{\F}^{s}(\Xk,E_3)).
\end{align*}
Moreover, the norm of $\mathsf{m}_e$ only relies on $\|\mathsf{b}_e\|_{s,\infty}$.
\smallskip\\
(iii) We define ${\mathsf{m}_{\kappa}}$ by
\begin{align*}
\mathsf{m}_{\kappa}(\xi_1,\xi_2):=\kfk(\zeta_{\kappa}\mathsf{m}({\kbk}\xi_1,{\kbk}\xi_2))
\end{align*} 
for $\xi_i\in E_i^{\Xk}$. Now it is a consequence of (L3), \eqref{section 2: bundle multiplication} and \cite[Lemma~3.1(iv)]{Ama13} that
\begin{align*}
\mathsf{m}_{\kappa}\in BC^{k}(\Xk,\L(E_1,E_2;E_3)),\hspace*{1em} \|\mathsf{m}_{\kappa}\|_{k,\infty}\leq c(k)
\end{align*}
for each $k\in\N_0$ and the constant $c(k)$ is independent of $\kappa$. Thus $\mathsf{m}_{\kappa}$ is a bundle multiplication. Now we conclude from (ii) that
\begin{align*}
\mathsf{m}_{\kappa}\in \L({\F}^{s}(\Xk,E_1),{\F}^{s}(\Xk,E_2);{\F}^{s}(\Xk,E_3)).
\end{align*}
Moreover, the norm of $\mathsf{m}_{\kappa}$ is independent of the choice of $\kappa$.
\smallskip\\
(iv) Given $v_1\in {\F}^{s}({\M},V_1)$ and $v_2\in {\F}^{s}({\M},V_2)$, we have
\begin{align*}
\Rck(\mathsf{m}(v_1,v_2))&={\kfk}(\pi_{\kappa}\mathsf{m}(v_1,v_2))\\
&=\sum_{\eta\in\mathfrak{N}(\kappa)}\mathsf{m}_{\kappa}(\Rck v_1,\varpi{\kfk}(\pi^2_{\eta}v_2)).
\end{align*}
The discussion in (iv) shows that 
\begin{align*}
\|\Rck(\mathsf{m}(v_1,v_2))\|_{t,\infty,\Xk}&\leq \sum_{\eta\in\mathfrak{N}(\kappa)}\| \mathsf{m}_{\kappa}(\Rck v_1,\varpi {\kfk}(\pi^2_{\eta}v_2))\|_{s,\infty,\Xk}\\
&\leq c\sum_{\eta\in\mathfrak{N}(\kappa)}\|\Rck v_1\|_{s,\infty,\Xk}\|\varpi{\kfk}(\pi^2_{\eta}v_2))\|_{s,\infty,\Xk}\\
& \leq c\sum_{\eta\in\mathfrak{N}(\kappa)}\|\Rck v_1\|_{s,\infty,\Xk}\|\mathcal{R}^{c}_{\eta}v_2\|_{s,\infty,\X_{\eta}}\\
& \leq cK \|\Rck v_1\|_{s,\infty,\Xk}\|v_2\|_{s,\infty}^{\M}
\end{align*}
The penultimate line follows from the point-wise multiplication result in $\Xk$ and Lemma \ref{coordinates transf}. The last line is a straightforward consequence of (R1). Thus Theorem \ref{retraction of BCk} implies that 
\begin{align*}
\mathsf{m}\in\L({\F}^{s}({\M},V_1),{\F}^{s}({\M},V_2);{BC^{s}({\M},V_3)}).
\end{align*}
By adopting a density argument based on Proposition \ref{embedding theory}, we can show that in fact 
\begin{align*}
\mathsf{m}\in\L(bc^{s}({\M},V_1),bc^{s}({\M},V_2);{bc^{s}({\M},V_3)}).
\end{align*}
Indeed, pick an arbitrary $t>s$. For each $v_i\in bc^{s}({\M},V_i)$ with $i=1,2$, there exist $(u_j^i)_j\in BC^{t}({\M},V_i)$ converging to $v_i$ in $BC^{s}({\M},V_i)$. Then by the above discussion and the triangle inequality, we deduce that $\mathsf{m}(u_j^1,u_j^2)\in BC^t(\M,V_3)$ and $\mathsf{m}(u_j^1,u_j^2)\rightarrow\mathsf{m}(v_1,v_2)$ in $BC^{s}({\M},V_3)$, which implies that $\mathsf{m}(v_1,v_2)\in bc^{s}({\M},V_3)$. 
\end{proof}

Let $s\geq{0}$ and $l\in{\N}_0$. A linear operator $\mathcal{A}:C^{\infty}({\M},V)\rightarrow \Gamma({\M},V)$ is called a linear differential operator of order $l$ on ${\M}$, if we can find $\boldsymbol{\mathfrak{a}}=(a^r)_r\in \prod_{r=0}^l \Gamma(\M, V^{\sigma+\tau+r}_{\tau+\sigma})$ such that
\begin{align}
\label{section 2: globally-defined diff-op}
\mathcal{A}=\mathcal{A}(\boldsymbol{\mathfrak{a}}):=\sum\limits_{r=0}^l \ev(a^r,\nabla^r \cdot).
\end{align}
Here the complete contraction 
\begin{align*}
\ev:\Gamma(\M, V^{\sigma+\tau+r}_{\tau+\sigma}\times V^{\sigma}_{\tau+r})\rightarrow \Gamma(\M, V^\sigma_\tau): (a,b)\mapsto \ev(a,b)
\end{align*}
is defined as follows. Let $(i_1),(i_2),(i_3)\in\mathbb{J}^{\sigma}$, $(j_1),(j_2),(j_3)\in\mathbb{J}^{\tau}$ and $(r_1),(r_2)\in\mathbb{J}^{r}$.
\begin{align*}
&\ev(a,b)(p):=\ev(a^{(i_3;j_1;r_1)}_{(j_3;i_1)} \frac{\partial}{\partial x^{(i_3)}}\otimes \frac{\partial}{\partial x^{(j_1)}}\otimes\frac{\partial}{\partial x^{(r_1)}}\otimes dx^{(j_3)}\otimes dx^{(i_1)},\\
&\quad  b^{(i_2)}_{(j_2;r_2)} \frac{\partial}{\partial x^{(i_2)}}\otimes dx^{(j_2)}\otimes dx^{(r_2)})(p)\\
&=a^{(i_3;j_1;r_1)}_{(j_3;i_1)} b^{(i_1)}_{(j_1;r_1)}\frac{\partial}{\partial x^{(i_3)}}\otimes dx^{(j_3)}(p),
\end{align*}
in every local chart and for $p\in\M$. The index $(i_2;j_1;r_1)$ is defined by
\begin{align*}
(i_3;j_1;r_1)=(i_{3,1},\cdots,i_{3,\sigma};j_{1,1},\cdots,j_{1,\tau};r_{1,1},\cdots,r_{1,r}).
\end{align*}
The other indices are defined in a similar way. \cite[Lemma~14.2]{Ama12} implies that $\ev$ is a bundle multiplication. Making use of \cite[formula~(3.18)]{Ama13}, one can check that for any $l$-th order linear differential operator so defined, in every local chart $({\Uk},\varphi_{\kappa})$ there exists some linear differential operator
\begin{align}
\label{section 2: local exp of diff-op}
\mathcal{A}_{\kappa}(x,\partial):=\sum\limits_{|\alpha|\leq{l}}a^{\kappa}_{\alpha}(x)\partial^{\alpha}, \hspace*{.5em}\text{ with }a^{\kappa}_{\alpha}\in \L(E)^{\Qk},
\end{align}
called the local representation of $\mathcal{A}$ in $(\Uk,\varphi_{\kappa})$, such that for any $u\in C^{\infty}({\M},V)$
\begin{align}
\label{section 2:local exp of diff-op 2}
{\kfk}(\mathcal{A}u)=\mathcal{A}_{\kappa}({\kfk}u).
\end{align}
What is more, the $\mathcal{A}_{\kappa}$'s satisfy 
\begin{align}
\label{section 2: well-defined diff-op}
{\psi_{\eta}^{\ast}}{\kbk}\mathcal{A}_{\kappa}{\kfk}{\varphi_{\eta}^{\ast}}=\mathcal{A}_{\eta}, \hspace*{1em}\eta\in\mathfrak{N}(\kappa).
\end{align}  
Conversely, suppose that $\mathcal{A}$ is a linear map acting on $C^{\infty}({\M},V)$, and its localizations satisfy \eqref{section 2: local exp of diff-op}-\eqref{section 2: well-defined diff-op}. Then $\mathcal{A}$ is a well-defined linear operator from $C^{\infty}({\M},V)$ to $\Gamma({\M},V)$. Moreover, one can show that $\mathcal{A}$ is actually a linear differential operator of order $l$ with expression \eqref{section 2: globally-defined diff-op}. Indeed, we can construct $\boldsymbol{\mathfrak{a}}=(a^r)_r$ in a recursive way as follows. For notational brevity, we express $\mathcal{A}_{\kappa}$ as
\begin{center}
$\mathcal{A}_{\kappa}= a^{(r)}_{\kappa}\partial_{(r)}$, \hspace{.5em}$(r)\in\mathbb{J}^s$ with $s\leq l$, and $a^{(r)}_{\kappa}\in \L(E)^{\Qk}$. 
\end{center}
Then we write the coefficients $a^{(r)}_{\kappa}$ of $\mathcal{A}^{\pi}_{\kappa}$, the principal part of $\mathcal{A}_{\kappa}$, in the matrix form $(a^{(i_2;j_1;r)}_{\kappa,(j_2;i_1)})$, where $a^{(i_2;j_1;r)}_{\kappa,(j_2;i_1)}$ is the entry of $a^{(r)}_{\kappa}$ in the $(i_1;j_1)$-th column and $(i_2;j_2)$-th row with indices ordered lexicographically. Here $(i_1),(i_2)\in\mathbb{J}^{\sigma}$, $(j_1),(j_2)\in\mathbb{J}^{\tau}$ and $(r)\in\mathbb{J}^{l}$. Then we define 
\begin{align}
\label{section 2: diff-op local-global}
a^l|_{\Uk}&:= a^{(i_2;j_1;r)}_{\kappa,(j_2;i_1)}\frac{\partial}{\partial x^{(i_2)}}
\otimes \frac{\partial}{\partial x^{(j_1)}}\otimes\frac{\partial}{\partial x^{(r)}}
\otimes dx^{(j_2)}\otimes dx^{(i_1)}\end{align}
on the local patch $\Uk$. It follows from \cite[formula~(3.18)]{Ama13} that 
\begin{align*}
{\kfk}\ev(a^l,\nabla^l u)=\mathcal{A}^{\pi}_{\kappa}{\kfk}u +\mathcal{B}_{\kappa}{\kfk}u,
\end{align*}
where $\mathcal{A}^{\pi}_{\kappa}$ is the principal part of $\mathcal{A}_{\kappa}$, and $\mathcal{B}_{\kappa}$ is a linear differential operator of order at most $l-1$ defined on $\Qk$. By the above argument, $\mathcal{A}$ is well-defined only if for all local coordinates $\varphi_{\kappa}=\{x_1,\cdots,x_m\}$ and $\varphi_{\eta}=\{y_1,\cdots,y_m\}$ with $\eta\in\mathfrak{N}(\kappa)$
\begin{align*}
{\psi_{\eta}^{\ast}}{\kbk}(\sum\limits_{(r)\in\mathbb{J}^l} a^{(r)}_{\kappa}\otimes {\kfk}\frac{\partial}{\partial x^{(r)}})=\sum\limits_{(r)\in\mathbb{J}^l} a^{(r)}_{\eta}\otimes {\psi_{\eta}^{\ast}}\frac{\partial}{\partial y^{(r)}}
\end{align*}
only if $a^l$ is invariant. Hence $a^l$ is globally well-defined. Therefore
\begin{align*}
\tilde{\mathcal{A}}u:=\mathcal{A}u-\ev(a^l,\nabla^l u)
\end{align*}
is a well-defined linear differential operator of order at most $l-1$. Then we repeat this process to lower the order of $\mathcal{A}$ till it reduces to zero. We find 
\begin{align}
\label{S2: diff-global-coef}
\boldsymbol{\mathfrak{a}}=(a^r)_r\in \prod_{r=0}^l \Gamma(\M, V^{\sigma+\tau+r}_{\tau+\sigma})
\end{align}
such that $\mathcal{A}$ can be formulated in the form of \eqref{section 2: globally-defined diff-op}. 
\smallskip\\
In the rest of this section, we assume that $\mathcal{A}$ is a linear differential operator of order $l$ over ${\M}$ with local expressions 
$
\mathcal{A}_{\kappa}(x,\partial):=\sum\limits_{|\alpha|\leq{l}}a^{\kappa}_{\alpha}(x)\partial^{\alpha}.$ 
We first state a useful proposition concerning the equivalence of regularity of local versus global coefficients.
\begin{prop}
\label{differential operator coef}
If $(a^{\kappa}_{\alpha})_{\kappa}\in{l_{b}(\boldsymbol{\F}^{s}({\Qk},\L(E))}$ for every $|\alpha|\leq l$ with $b=``\infty"$ for $\F=BC$, or $b=``\infty,\uf"$ for $\F=bc$, then $\boldsymbol{\mathfrak{a}}$ in \eqref{S2: diff-global-coef} belongs to the class $\prod_{r=0}^l \F^{s}(\M,V^{\sigma+\tau+r}_{\tau+\sigma})$, and vice versa.
\end{prop}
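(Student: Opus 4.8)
The plan is to regard the correspondence between the global coefficients $\boldsymbol{\mathfrak a}=(a^r)_{r=0}^l$ of \eqref{S2: diff-global-coef} and the local coefficients $(a^\kappa_\alpha)_{|\alpha|\le l}$ of \eqref{section 2: local exp of diff-op} --- the latter being reconstructed from the former by the recursion preceding \eqref{S2: diff-global-coef} --- as a linear transformation realized chart by chart by a fixed algebraic (zeroth order) substitution. By \cite[formula~(3.18)]{Ama13}, in a chart $(\Uk,\varphi_\kappa)$ each $a^\kappa_\alpha$ is a finite sum of products of the coordinate components of the $a^r$ ($r\ge|\alpha|$) with ``structure functions'' which are universal polynomials in the Christoffel symbols of $\kfk g$ and their derivatives; conversely, \eqref{section 2: diff-op local-global} and the recursion express the coordinate components of each $a^r$ through the $a^\kappa_\alpha$ and the same kind of structure functions. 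By (R2)--(R4) and (L3) these structure functions, as well as $\kfk\pi_\kappa$, lie in $BC^\infty(\Qk)$ with bounds depending only on the order of differentiation, uniformly in $\kappa$ --- and, on the little-H\"older scale, are moreover uniformly continuous together with all derivatives, uniformly in $\kappa$. Hence the uniform chart-wise pointwise multiplier theorem underlying Proposition \ref{pointwise multiplication properties} (equivalently \cite[Theorem~13.5]{Ama12}) shows that this substitution and its inverse map $l_b(\boldsymbol{\F}^s(\Qk,\L(E)))$ into itself. Two auxiliary facts will be used: first, since $(\pi_\kappa^2)_\kappa$ is a partition of unity one has $\Re\Rc=\id$ on $L_{1,loc}(\M,V)$, so by Theorem \ref{retraction of BCk} a tensor field $u\in L_{1,loc}(\M,V)$ lies in $\F^s(\M,V)$ iff $\Rc u\in l_b(\boldsymbol{\F}^s)$, with equivalent norms; secondly, for every $a\in\F^s(\M,V)$ there is the localization bound
\[
\|\kfk(\varpi_\kappa a)\|_{\F^s_\kappa}\le c\,\|a\|_{\F^s}^{\M},\qquad \kappa\in\mathfrak K,
\]
with $c$ independent of $\kappa$, proved by writing $\varpi_\kappa a=\sum_{\eta\in\mathfrak N(\kappa)}\varpi_\kappa\pi_\eta^2 a$ --- a sum of at most $K$ terms by (R1) --- and estimating each summand as in the proofs of Lemma \ref{coordinates transf} and Proposition \ref{equivalence definition}. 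Finally, since by \eqref{section 2:local exp of diff-op 2} the operator $\mathcal A_\kappa$ is only relevant where $\kfk\pi_\kappa$, hence $\varpi$, equals $1$ (use (L2) and \eqref{section 1: biggest cut-off fn}), we may and do assume $a^\kappa_\alpha=\varpi\,a^\kappa_\alpha$; then, for $|\alpha|=r$, the $a^\kappa_\alpha$ are, up to the rearrangement in \eqref{section 2: diff-op local-global}, precisely the coordinate components of $\kfk(\varpi_\kappa a^r)$.

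For the implication ``$(a^\kappa_\alpha)_\kappa\in l_b\Rightarrow\boldsymbol{\mathfrak a}\in\prod_r\F^s$'' I would follow the recursion preceding \eqref{S2: diff-global-coef} and induct downward on $r=l,l-1,\dots,0$. At the top step, by \eqref{section 2: diff-op local-global} the coordinate components of $a^l$ on $\Uk$ are a rearrangement of the principal local coefficients $a^\kappa_\alpha$ ($|\alpha|=l$), so $\Rck a^l=\kfk\pi_\kappa\cdot(\text{rearranged }a^\kappa_\alpha)$, which lies in $l_b(\boldsymbol{\F}^s)$ by (L3) and pointwise multiplication; by the first auxiliary fact, $a^l\in\F^s(\M,V^{\sigma+\tau+l}_{\tau+\sigma})$. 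The reduced operator $\tilde{\mathcal A}u=\mathcal Au-\ev(a^l,\nabla^l u)$ has order $\le l-1$, and by \cite[formula~(3.18)]{Ama13} its local coefficients equal $a^\kappa_\alpha$ ($|\alpha|\le l-1$) minus the coefficients of $\mathcal B_\kappa$, the latter being finite sums of products of the $\varpi$-truncated coordinate components of $a^l$ --- uniformly in $\F^s_\kappa$ by the localization bound applied to $a^l$ --- with the uniformly-$BC^\infty$ structure functions; hence $\tilde{\mathcal A}$ again has local coefficients in $l_b(\boldsymbol{\F}^s)$ but order lowered by one. Iterating down to order zero produces $\boldsymbol{\mathfrak a}=(a^r)_r\in\prod_{r=0}^l\F^s(\M,V^{\sigma+\tau+r}_{\tau+\sigma})$.

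Conversely, given $\boldsymbol{\mathfrak a}\in\prod_{r=0}^l\F^s(\M,V^{\sigma+\tau+r}_{\tau+\sigma})$, I would localize $\mathcal A=\sum_r\ev(a^r,\nabla^r\cdot)$ (see \eqref{section 2: globally-defined diff-op}) via \cite[formula~(3.18)]{Ama13}: each $a^\kappa_\alpha$ is a finite sum of products of the $\varpi$-truncated coordinate components of the $a^r$ ($r\ge|\alpha|$) --- uniformly in $\F^s_\kappa$ by the localization bound --- with the uniformly-$BC^\infty$ structure functions, so pointwise multiplication gives $(a^\kappa_\alpha)_\kappa\in l_b(\boldsymbol{\F}^s(\Qk,\L(E)))$. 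For $\F=bc$ one must, in addition, keep track that the uniform-in-$\kappa$ vanishing \eqref{S2: infnty,uf} (respectively, the uniform continuity of the derivatives) of all ingredients is preserved under products, which is exactly as in Proposition \ref{pointwise multiplication properties}. The main obstacle is precisely this bookkeeping of cutoffs: the norm of $\F^s(\M,V)$ --- defined through interpolation when $s\notin\N$ --- a priori controls only the $\pi_\kappa$-localizations $\kfk(\pi_\kappa a^r)$, whereas the Christoffel corrections in \cite[formula~(3.18)]{Ama13} involve the coordinate components of the $a^r$ over the slightly larger support of $\varpi_\kappa$; bridging this gap is the content of the localization bound, whose proof is the only genuinely technical point and rests on Lemma \ref{coordinates transf} and the finite multiplicity (R1).
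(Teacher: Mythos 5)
Your proposal follows essentially the same route as the paper's (very terse) proof: identify $\Rck a^l$ with the pointwise product ${\kfk}\pi_{\kappa}a^{(r)}_{\kappa}\otimes{\kfk}\frac{\partial}{\partial{x}^{(r)}}$ via \eqref{section 2: diff-op local-global}, apply the pointwise multiplication results and Theorem~\ref{retraction of BCk}, and then run the downward recursion on the order (with the converse handled through \cite[formula~(3.18)]{Ama13}). The only difference is that you make explicit the $\varpi_{\kappa}$-localization bound needed to control the lower-order corrections on the larger supports, a detail the paper leaves implicit in ``the rest of the proof follows from the recursive construction''; this is a correct elaboration rather than a different argument.
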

\begin{proof}
$(a^{\kappa}_{\alpha})_{\kappa}\in{l_{b}(\boldsymbol{\F}^{s}({\Qk},\L(E)))}$ implies that $(a^{(i_2;j_1;r)}_{\kappa,(j_2;i_1)})_{\kappa}\in{l_{b}(\boldsymbol{\F}^{s}({\Qk}))}$. By \eqref{section 2: diff-op local-global}, we attain $\Rck a^l={\kfk}\pi_{\kappa}a^{(r)}_{\kappa}\otimes {\kfk}\frac{\partial}{\partial x^{(r)}}$ with $(r)\in\mathbb{J}^{l}$. The point-wise multiplication results on $\Qk$ yield
\begin{align*}
\Rc a^l\in l_{b}(\boldsymbol{\F}^s(\Qk,E^{\sigma+\tau+r}_{\tau+\sigma})).
\end{align*}
By Theorem \ref{retraction of BCk}, we have that $a^l\in \F^s(\M,V^{\sigma+\tau+r}_{\tau+\sigma})$. The rest of the proof, including the converse statement, follows from the recursive construction of $a^r$ above and \cite[formula~(3.18)]{Ama13}.
\end{proof}
\begin{prop}
\label{differential operator properties 2}
Suppose that $\boldsymbol{\mathfrak{a}}=(a^r)_r\in \prod_{r=0}^l bc^{s}(\M,V^{\sigma+\tau+r}_{\tau+\sigma})$. Then 
\begin{center}
$\mathcal{A}\in\L({bc}^{s+l}(\M,V),{bc}^{s}(\M,V))$.
\end{center}
\end{prop}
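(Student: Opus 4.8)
The plan is to reduce the mapping property of $\mathcal{A}$ on $\M$ to the corresponding mapping property of the local representations $\mathcal{A}_\kappa$ on the model spaces $\Xk$, using the retraction-coretraction system $(\Re,\Rc)$ from Theorem~\ref{retraction of BCk}. The key identity is \eqref{section 2:local exp of diff-op 2}, which says $\Rck(\mathcal{A}u)$ is, up to the cutoff $\pi_\kappa$, obtained by applying $\mathcal{A}_\kappa$ to $\kfk u$. More precisely, I would first record that, by Proposition~\ref{differential operator coef}, the hypothesis $\boldsymbol{\mathfrak{a}}\in\prod_{r=0}^l bc^{s}(\M,V^{\sigma+\tau+r}_{\tau+\sigma})$ is equivalent to $(a^\kappa_\alpha)_\kappa\in l_{\infty,\uf}(\boldsymbol{bc}^s(\Qk,\L(E)))$ for all $|\alpha|\le l$. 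So the coefficients of the local operators are uniformly bounded in the relevant little-H\"older norm, uniformly in $\kappa$.

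Next I would write, for $u\in bc^{s+l}(\M,V)$,
\begin{align*}
\Rck(\mathcal{A}u)=\kfk(\pi_\kappa\,\mathcal{A}u)=\sum_{\eta\in\mathfrak{N}(\kappa)} \kfk\big(\pi_\kappa\,\varpi_\kappa\,\mathcal{A}(\pi_\eta^2 u)\big),
\end{align*}
using the partition of unity $(\pi_\eta^2)$ from (L1) together with the cutoff $\varpi_\kappa$ from \eqref{section 1: biggest cut-off fn}, chosen so that it is $\equiv 1$ on the support of the relevant patch; this is the same bookkeeping used in the proofs of Proposition~\ref{equivalence definition} and Proposition~\ref{pointwise multiplication properties}. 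Transferring this to $\Xk$ via \eqref{section 2:local exp of diff-op 2}, each summand becomes $\mathcal{A}_\kappa$ applied to a coordinate expression of $\Rc_\eta^c u$ transported into the $\kappa$-chart by a map of the type $S_{\eta\kappa}$ from Lemma~\ref{coordinates transf}. The point is then purely Euclidean: $\mathcal{A}_\kappa=\sum_{|\alpha|\le l}a^\kappa_\alpha\partial^\alpha$ maps $bc^{s+l}(\Xk,E)$ into $bc^{s}(\Xk,E)$ with operator norm controlled by $\max_{|\alpha|\le l}\|a^\kappa_\alpha\|_{bc^s}$ — this follows from $\partial^\alpha\in\L(bc^{s+l}(\Xk,E),bc^{s}(\Xk,E))$ for $|\alpha|\le l$ together with the point-wise multiplication result on $\Xk$ (the scalar model of Proposition~\ref{pointwise multiplication properties}). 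Crucially, by the uniform bound on the coefficients, this norm is bounded uniformly in $\kappa$.

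Combining these, $\|\Rck(\mathcal{A}u)\|_{bc^s_\kappa}\le C\sum_{\eta\in\mathfrak{N}(\kappa)}\|\Rc_\eta^c u\|_{bc^{s+l}_\eta}\le CK\|\Rc u\|_{l_{\infty,\uf}(\boldsymbol{bc}^{s+l})}$, where the last step uses the finite multiplicity (R1) to bound the cardinality of $\mathfrak{N}(\kappa)$ by $K$. Taking the supremum over $\kappa$ and invoking that $\Rc$ is a coretraction and $\Re$ a retraction (Theorem~\ref{retraction of BCk}), i.e. $\mathcal{A}u=\Re\Rc(\mathcal{A}u)$, gives $\|\mathcal{A}u\|_{bc^s}^{\M}\lesssim\|u\|_{bc^{s+l}}^{\M}$, and also shows $\mathcal{A}u\in bc^s(\M,V)$ once we know each $\Rck(\mathcal{A}u)$ lies in $bc^s_\kappa$ uniformly (which is built into $l_{\infty,\uf}(\boldsymbol{bc}^s)$). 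To see that $\mathcal{A}u$ genuinely lands in the \emph{little} H\"older space rather than just $BC^s$, I would either verify the uniform vanishing condition \eqref{S2: infnty,uf} directly from the corresponding property of the coefficients and of $u$, or—cleaner—run a density argument as at the end of the proof of Proposition~\ref{pointwise multiplication properties}: approximate $u$ by elements of $bc^{s+l+\varepsilon}(\M,V)$, whose images under $\mathcal{A}$ lie in $bc^{s+\varepsilon}(\M,V)\hookrightarrow bc^s(\M,V)$ by Proposition~\ref{embedding theory}, and use the continuity estimate just proved to pass to the limit.

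The main obstacle I expect is the coordinate-change bookkeeping in the second step: making precise that the local representations $\mathcal{A}_\kappa$, when hit with cutoffs and pulled between overlapping charts, are controlled uniformly. This is exactly what \eqref{section 2: well-defined diff-op}, Lemma~\ref{coordinates transf}, and (R2) are for, but assembling them so that the constant $C$ above is genuinely independent of $\kappa$ (and the sum over $\eta\in\mathfrak{N}(\kappa)$ is handled by (R1) alone) requires care. Everything else — the Euclidean mapping property of a differential operator on H\"older scales, the density argument for the little-H\"older refinement — is standard once the uniform-in-$\kappa$ framework is in place.
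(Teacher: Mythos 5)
Your argument is workable, but it takes a genuinely different (and much longer) route than the paper. The paper's proof is two lines: it stays entirely global, quoting $\nabla\in\L(bc^{s+1}(\M,V),bc^{s}(\M,V^{\sigma}_{\tau+1}))$ from \cite[Theorem~16.1]{Ama12} (recorded as \eqref{section 2: gradient}), so that $\nabla^r u\in bc^{s+l-r}\hookrightarrow bc^s$ for $u\in bc^{s+l}$, and then observes that $\mathcal{A}u=\sum_{r}\ev(a^r,\nabla^r u)$ with $\ev$ a bundle multiplication, so Proposition~\ref{pointwise multiplication properties} applied to $bc^s\times bc^s\to bc^s$ finishes it. You instead localize: pass to the coefficients $(a^\kappa_\alpha)_\kappa\in l_{\infty,\uf}(\boldsymbol{bc}^s)$ via Proposition~\ref{differential operator coef}, prove the Euclidean mapping property of each $\mathcal{A}_\kappa$ uniformly in $\kappa$, and reassemble with $(\Re,\Rc)$, Lemma~\ref{coordinates transf}, and (R1). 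This is essentially re-deriving by hand the content that the paper has already packaged into Theorem~\ref{retraction of BCk}, Proposition~\ref{pointwise multiplication properties}, and \eqref{section 2: gradient}; what it buys you is independence from Amann's Theorem~16.1 on the covariant derivative, at the price of repeating the chart-overlap bookkeeping (which, as you note, is the delicate part and is exactly what the proofs of Proposition~\ref{equivalence definition} and step (iv) of Proposition~\ref{pointwise multiplication properties} already carry out).

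One detail in your final step is wrong as stated: if you approximate $u$ by $u_j\in bc^{s+l+\varepsilon}(\M,V)$, the images $\mathcal{A}u_j$ do \emph{not} lie in $bc^{s+\varepsilon}(\M,V)$, because the coefficients are only assumed to be in $bc^{s}$; the operator cannot output more regularity than its coefficients possess. The repair is harmless but should be made explicit: either verify the uniform vanishing condition \eqref{S2: infnty,uf} for $\Rck(\mathcal{A}u)$ directly from the corresponding property of the $a^\kappa_\alpha$ and of $\Rck u$ (your first option), or note that for smooth $u_j$ one still has $\mathcal{A}u_j\in bc^{s}(\M,V)$ by the multiplication result $bc^s\times bc^s\to bc^s$, and then use that $bc^s$ is closed in $BC^s$ together with the continuity estimate to conclude $\mathcal{A}u\in bc^s(\M,V)$.
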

\begin{proof}
By \cite[Theorem~16.1]{Ama12}, we have
\begin{align}
\label{section 2: gradient}
\nabla\in \L(bc^{s+1}(\M,V),bc^{s}(\M,V^{\sigma}_{\tau+1})),\hspace*{1em}s\notin\N_0.
\end{align}
The case that $s\in\N_0$ follows by the definition of H\"older spaces and a density argument. Since $\ev$ is a bundle multiplication, the statement is a straightforward conclusion of Proposition \ref{pointwise multiplication properties}. 
\end{proof}
The following corollary is a special case of Proposition \ref{differential operator properties 2}.
\begin{cor}
\label{differential operator properties}
Suppose that $\boldsymbol{\mathfrak{a}}=(a^r)_r\in \prod_{r=0}^l BC^{t}(\M,V^{\sigma+\tau+r}_{\tau+\sigma})$.  Then 
\begin{align*}
\mathcal{A}\in\mathcal{L}({\F}^{s+l}(\M,V),{\F}^{s}(\M,V)). 
\end{align*}
Here we choose $t>s$ for ${\F}=bc$, or $t\geq s$ for ${\F}=BC$.
\end{cor}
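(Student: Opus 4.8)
The plan is to deduce Corollary~\ref{differential operator properties} from Proposition~\ref{differential operator properties 2} together with the embedding results of Proposition~\ref{embedding theory}. First I would treat the little H\"older case $\F = bc$. If $\boldsymbol{\mathfrak{a}} = (a^r)_r \in \prod_{r=0}^l BC^t(\M, V^{\sigma+\tau+r}_{\tau+\sigma})$ with $t > s$, then by Proposition~\ref{embedding theory} each component $a^r$ lies in $bc^s(\M, V^{\sigma+\tau+r}_{\tau+\sigma})$, so $\boldsymbol{\mathfrak{a}} \in \prod_{r=0}^l bc^s(\M, V^{\sigma+\tau+r}_{\tau+\sigma})$, and Proposition~\ref{differential operator properties 2} immediately gives $\mathcal{A} \in \L(bc^{s+l}(\M,V), bc^s(\M,V))$. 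This is exactly the claim for $\F = bc$.

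Next I would handle $\F = BC$ with $t \geq s$. The natural route is again to reduce to the bundle multiplication estimate of Proposition~\ref{pointwise multiplication properties} and the gradient mapping property \eqref{section 2: gradient}, but now keeping track of the fact that the target can only be a $BC$-space. Concretely, write $\mathcal{A} = \sum_{r=0}^l \ev(a^r, \nabla^r \cdot)$. For $u \in BC^{s+l}(\M,V)$, iterating the gradient estimate gives $\nabla^r u \in BC^{s+l-r}(\M, V^\sigma_{\tau+r})$, hence in particular $\nabla^r u \in BC^s(\M, V^\sigma_{\tau+r})$ since $s+l-r \geq s$; one must be slightly careful because \eqref{section 2: gradient} is stated for little H\"older spaces with non-integer exponents, so for integer exponents and for the $BC$-scale I would invoke the definition of the H\"older spaces via interpolation (Proposition~\ref{interpolation of LH spaces}) and a density/interpolation argument exactly as in the proof of Proposition~\ref{differential operator properties 2}. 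Then, since $\ev$ is a bundle multiplication (as recorded after \eqref{section 2: globally-defined diff-op}), Proposition~\ref{pointwise multiplication properties} applied with $\F = BC$ at smoothness level $s$ yields $\ev(a^r, \nabla^r u) \in BC^s(\M, V)$ with norm controlled by $\|a^r\|^{\M}_{BC^s} \|\nabla^r u\|^{\M}_{BC^s} \lesssim \|a^r\|^{\M}_{BC^t}\|u\|^{\M}_{BC^{s+l}}$, where the first inequality also uses $BC^t \hookrightarrow BC^s$ when $t > s$ and is trivial when $t = s$. Summing over $r$ gives the desired bound.

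I expect the only real subtlety to be the treatment of integer values of $s$ (and of $s+l-r$) in the gradient mapping property and the point-wise multiplication property, since several of the cited results (e.g.\ \eqref{section 2: gradient}, \cite[Theorem~16.1]{Ama12}) are phrased for non-integer exponents. This is precisely the gap that was "filled in by interpolation theory" in the proofs of Propositions~\ref{differential operator properties 2} and \ref{pointwise multiplication properties}, so I would simply point to those arguments: the mapping properties for integer $s$ follow from the non-integer cases by the definition $BC^s(\M,V) = (bc^k, bc^{k+1})_{s-k,\infty}$ together with reiteration and the fact that a bounded linear (or bilinear) map between H\"older scales interpolates. No new difficulty arises at the level of the global-versus-local correspondence, since that was already settled by Proposition~\ref{differential operator coef} and the recursive construction of $\boldsymbol{\mathfrak{a}}$. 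Thus the corollary is genuinely a corollary, and the write-up should be short: state the $bc$-case as an instance of Proposition~\ref{differential operator properties 2} via Proposition~\ref{embedding theory}, and for the $BC$-case combine the iterated gradient estimate with Proposition~\ref{pointwise multiplication properties} and the embedding $BC^t \hookrightarrow BC^s$.
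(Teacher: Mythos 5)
Your proposal is correct and follows essentially the route the paper intends: the paper offers no separate proof, declaring the corollary ``a special case'' of Proposition~\ref{differential operator properties 2}, and your $bc$-case is exactly that reduction via Proposition~\ref{embedding theory}. You are also right that the $BC$-case is not literally a specialization of the proposition as stated (whose target is the $bc$-scale), and your fix --- rerunning the same argument with the $BC$-versions of the gradient estimate and of Proposition~\ref{pointwise multiplication properties}, with integer exponents handled by interpolation --- is the correct and intended completion.
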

\begin{remark}
\label{differential operator properties-rmk}
Let $\boldsymbol{\mathfrak{a}}=(a^r)_r\in \prod_{r=0}^{l}{\F}^{s}(\M,V^{\sigma+\tau+r}_{\tau+\sigma})$ with $\F\in\{bc,BC\}$. Then due to \eqref{section 2: gradient} and Proposition \ref{pointwise multiplication properties}, we deduce that
\begin{align*}
[\boldsymbol{\mathfrak{a}}\mapsto \mathcal{A}(\boldsymbol{\mathfrak{a}})]\in \L(\prod_{r=0}^{l}{\F}^{s}(\M,V^{\sigma+\tau+r}_{\tau+\sigma}),\L({\F}^{s+l}(\M,V),{\F}^{s}(\M,V))).
\end{align*}
\end{remark}
\bigskip

\section{\bf Analytic Semigroups and Continuous Maximal Regularity on Uniformly Regular Riemannian Manifolds without Boundary}

Let $X$ be a Banach space. Given some $\mathcal{E} \geq 1$ and $\vartheta\in [0,\pi)$, a linear differential operator of order $l$, 
\begin{center}
$\mathcal{A}:=\mathcal{A}(x,\partial):=\sum\limits_{|\alpha|\leq{l}}a_{\alpha}(x)\partial^{\alpha}$, 
\end{center}
defined on an open subset $U\subset{\H}^m$ with $a_{\alpha}: U \rightarrow \L(X)$ is said to be $(\mathcal{E},\vartheta;l)$-\emph{elliptic}, if its principal symbol
\begin{center}
$\hat{\sigma}\mathcal{A}^{\pi}(x,\xi):U\times\R^m \rightarrow \L(X)$,\hspace{1em} $(x,\xi)\mapsto \sum\limits_{|\alpha|=l}a_{\alpha}(x)(-i\xi)^{\alpha}$
\end{center}
satisfies
\begin{align}
\label{section 6:unif elpt 1}
\Sigma_{\vartheta}:=\{z\in\mathbb{C}:|{\rm arg}\,z|\leq \vartheta \}\cup\{0\}\subset\rho(-\hat{\sigma}\mathcal{A}^{\pi}(x,\xi))
\end{align}
and  
\begin{align}
\label{section 6:unif elpt 2}
(1+|\lambda|)\|R(\lambda,-\hat{\sigma}\mathcal{A}^{\pi}(x,\xi))\|_{\L(X)}\leq \mathcal{E}, \hspace*{1em}\lambda\in \Sigma_{\vartheta},
\end{align}
and all $(x,\xi)\in U\times\R^m$ with $|\xi|=1$. The constant $\mathcal{E}$ is called the \emph{ellipticity constant} of $\mathcal{A}$. $\hat{\sigma}\mathcal{A}^{\pi}(x,\xi)$ is considered as an element of $\L(X)$. Here $i$ is the complex identity, if necessary, we consider the complexification of $X$. In particular, $\mathcal{A}$ is called \emph{normally elliptic of order $l$} if it is $(\mathcal{E},\frac{\pi}{2};l)$-elliptic for some constant $\mathcal{E}\geq 1$. We readily check that a normally elliptic operator must be of even order. This concept was introduced by H. Amann in \cite{Ama01}. 
\smallskip\\
If $X$ is of finite dimension, then $\mathcal{A}$ is $(\mathcal{E},\vartheta;l)$-elliptic on $U$ iff there exist some $0<r(\mathcal{E},\vartheta)<R(\mathcal{E},\vartheta)$ such that the spectrum of $\hat{\sigma}\mathcal{A}(x,\xi)$ is contained in 
\begin{align*}
\{z\in\mathbb{C}:r<|z|<R\}\cap \mathring{\Sigma}_{\pi-\vartheta}
\end{align*}
for all $(x,\xi)\in U\times\R^m$ with $|\xi|=1$. In particular, in the case that $X=\K$, $\mathcal{A}$ is normally elliptic of order $l$ on $U$ if there exist $0<r(\mathcal{E},\vartheta)<R(\mathcal{E},\vartheta)$ such that
\begin{align*}
{R}\geq Re(\hat{\sigma}\mathcal{A}(x,\xi))\geq{r},\hspace*{1em}(x,\xi)\in{U}\times{\R}^m,\hspace*{.5em}\text{with}\hspace*{.5em}|\xi|=1.
\end{align*}
It is worthwhile mentioning that the concept of normal ellipticity is usually referred to as uniformly strong ellipticity in the scalar-valued case.
\smallskip\\
Let $\ev:\Gamma(\M,V^{\sigma+\tau+l}_{\tau+\sigma}\times V^0_l) \rightarrow \Hom(V)$ be the complete contraction.
A linear operator $\mathcal{A}:=\mathcal{A}(\boldsymbol{\mathfrak{a}}): C^{\infty}({\M}, V) \rightarrow \Gamma({\M}, V)$ of order $l$ is said to be $(\mathcal{E},\vartheta;l)$-\emph{elliptic} if there exists a $\mathcal{E}\geq 1$ such that its principal symbol 
\begin{align*}
\hat{\sigma}\mathcal{A}^{\pi}(p,\xi(p)):=\ev(a^l,(-i\xi)^{\otimes l})(p)\in \L(T_p\M^{\otimes\sigma}\otimes T_p^{\ast}\M^{\otimes\tau})
\end{align*}
satisfies that $S:=\Sigma_{\vartheta}\subset\rho(-\hat{\sigma}\mathcal{A}^{\pi}(p,\xi(p)))$ and
\begin{align*}
(1+|\lambda|)\|R(\lambda,-\hat{\sigma}\mathcal{A}^{\pi}(p,\xi(p))) 
\|_{\L(T_p\M^{\otimes\sigma}\otimes T_p^{\ast}\M^{\otimes\tau})} \leq \mathcal{E},\hspace*{1em}\lambda\in S,
\end{align*}
for all $(p,\xi)\in \M\times\Gamma(\M, T^\ast M)$ with $|\xi|_{g^{\ast}}=1_{\M}$.
This definition is a natural extension of its Euclidean version. In fact, a linear differential $\mathcal{A}$ of order $l$ is $(\mathcal{E},\vartheta;l)$-\emph{elliptic}, iff all its local realizations 
\begin{align*}
\mathcal{A}_{\kappa}(x,\partial)=\sum\limits_{|\alpha|\leq{l}}a^{\kappa}_{\alpha}(x)\partial^{\alpha}
\end{align*}
are $(\mathcal{E}^{\prime},\vartheta;l)$-elliptic on $\Qk$ with uniform constants $\mathcal{E}^{\prime}$, $\vartheta$ in condition \eqref{section 6:unif elpt 1} and \eqref{section 6:unif elpt 2}. In particular, $\mathcal{A}$ is called \emph{normally elliptic of order $l$}, if $\mathcal{A}$ is $(\mathcal{E},\frac{\pi}{2};l)$-elliptic for some constant $\mathcal{E}\geq 1$. Analogously, the constant $\mathcal{E}$ is called the \emph{ellipticity constant} of $\mathcal{A}$ defined on $\M$. 

For the proof of the main theorem, we first quote a result of H. Amann.
\begin{prop}
\cite[Theorem~4.2, Remark~4.6]{Ama01}
\label{S3: RE-Rm}
Let $s> 0$, $s\notin\N$ and $X$ be a Banach space. Suppose that $\mathcal{A}=\sum_{|\alpha|\leq l}a_{\alpha}\partial^{\alpha}$ is a $(\mathcal{E},\vartheta;l)$-elliptic operator on $\R^m$ with 
\begin{center}
$a_{\alpha}\in bc^s(\R^m,\L(X))$, and $\|a_{\alpha}\|_{s,\infty}\leq \mathcal{K}$ for all $|\alpha|\leq l$.
\end{center}
Then there exist constants $\omega(\mathcal{E},\mathcal{K})$, $\mathcal{N}(\mathcal{E},\mathcal{K})$ such that $S:=\omega+\Sigma_{\vartheta}\subset \rho(-\mathcal{A})$ and
\begin{align*}
|\lambda|^{1-j}\|R(\lambda,-\mathcal{A})\|_{\L(\F^s(\R^m,X),\F^{s+lj}(\R^m,X))}\leq \mathcal{N},\hspace{1em}\text{for } \lambda\in S,\hspace*{.5em} j=0,1,
\end{align*}
with $\F\in\{bc,BC\}$. Here $\mathcal{E}$ is the ellipticity constant of $\mathcal{A}$.
\end{prop}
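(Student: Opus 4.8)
The plan is to follow the classical route for elliptic operators with little-Hölder-continuous coefficients: reduce to the principal part, treat the constant-coefficient model via Fourier multipliers, globalize by freezing coefficients, and obtain surjectivity by the method of continuity. \emph{Step~1 (reduction to the principal part).} Write $\mathcal{A}=\mathcal{A}^{\pi}+\mathcal{B}$, where $\mathcal{A}^{\pi}=\sum_{|\alpha|=l}a_{\alpha}\partial^{\alpha}$ and $\mathcal{B}$ collects the terms of order $\le l-1$. Granting the estimate for $\mathcal{A}^{\pi}$ on a sector $S_0=\omega_0+\Sigma_{\vartheta}$, one uses the interpolation inequality $\|u\|_{s+r}\le \varepsilon\|u\|_{s+l}+C(\varepsilon)\|u\|_{s}$ (valid on $\F^{s}(\R^m,X)$ for $0\le r<l$, $\F\in\{bc,BC\}$) together with the $j=0,1$ bounds for $R(\lambda,-\mathcal{A}^{\pi})$ to get $\|\mathcal{B}R(\lambda,-\mathcal{A}^{\pi})f\|_{s}\le C|\lambda|^{-1/l}\|f\|_{s}\le\tfrac12\|f\|_{s}$ for $|\lambda|$ large on the sector. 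A Neumann series then produces $R(\lambda,-\mathcal{A})$ on a possibly larger sector $S=\omega+\Sigma_{\vartheta}$ with the same scaling in $|\lambda|$, so it suffices to treat $\mathcal{A}^{\pi}$.

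\emph{Step~2 (the constant-coefficient model).} Fix $x_0$ and set $\mathcal{A}_0:=\sum_{|\alpha|=l}a_{\alpha}(x_0)\partial^{\alpha}$. For $\lambda\in\Sigma_{\vartheta}\setminus\{0\}$ the ellipticity hypothesis, after the homogeneity scaling $\xi\mapsto|\lambda|^{1/l}\xi$, yields invertibility of $\lambda+\hat{\sigma}\mathcal{A}_0(\xi)$ together with the parameter-dependent symbol estimates
\begin{center}
$|\partial_{\xi}^{\beta}(\lambda(\lambda+\hat{\sigma}\mathcal{A}_0(\xi))^{-1})|+|\partial_{\xi}^{\beta}(\hat{\sigma}\mathcal{A}_0(\xi)(\lambda+\hat{\sigma}\mathcal{A}_0(\xi))^{-1})|\le C_{\beta}(|\lambda|^{1/l}+|\xi|)^{-|\beta|}$
\end{center}
for every multi-index $\beta$, with $C_{\beta}=C_{\beta}(\mathcal{E},\vartheta)$. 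Since $BC^{s}$ coincides with the Hölder--Zygmund (Besov) space $B^{s}_{\infty,\infty}$ for $s\notin\N$, a Mikhlin--Hörmander type multiplier theorem on Besov spaces — applied through a Littlewood--Paley decomposition in which the frequency scale $|\lambda|^{1/l}$ is absorbed into the dyadic scaling — gives $|\lambda|^{1-j}\|R(\lambda,-\mathcal{A}_0)\|_{\L(\F^{s}(\R^m,X),\F^{s+lj}(\R^m,X))}\le\mathcal{N}_0(\mathcal{E},\vartheta)$ for $j=0,1$, uniformly in $\lambda\in\Sigma_{\vartheta}$ and in $x_0$. The case $\F=bc$ follows because $R(\lambda,-\mathcal{A}_0)$ maps $BC^{\infty}(\R^m,X)$ into itself and $bc^{s+lj}$ is the closure of $BC^{\infty}$ in $BC^{s+lj}$.

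\emph{Step~3 (freezing coefficients, globalization, surjectivity).} Because $a_{\alpha}\in bc^{s}(\R^m,\L(X))$, each $a_{\alpha}$ is uniformly continuous and $\lim_{\delta\to 0}[a_{\alpha}]^{\delta}_{s,\infty}=0$; hence on balls $B_{\delta}(x_0)$ of a fixed small radius the oscillation $\|a_{\alpha}-a_{\alpha}(x_0)\|_{\infty,B_{\delta}(x_0)}$ and the localized seminorms are uniformly small. Pick a uniformly locally finite partition of unity $(\chi_j^2)$ subordinate to such a cover, write $u=\sum_j\chi_j u$, apply the frozen-coefficient estimate of Step~2 to each $\chi_j u$, and control the commutator $[\mathcal{A}^{\pi},\chi_j]$ (of order $\le l-1$) and the coefficient-difference term $(\mathcal{A}^{\pi}-\mathcal{A}_{x_j})\chi_j u$ via the smallness above plus the interpolation inequality. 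Summing over $j$, using the finite overlap of the cover and an almost-orthogonality bookkeeping for Hölder norms, the error terms are absorbed for $|\lambda|$ large, giving the a priori estimate $|\lambda|^{1-j}\|u\|_{s+lj}\le\mathcal{N}\|(\lambda+\mathcal{A}^{\pi})u\|_{s}$ on $S=\omega+\Sigma_{\vartheta}$. Finally, connect $\mathcal{A}^{\pi}$ through a homotopy of $(\mathcal{E}',\vartheta;l)$-elliptic operators with $bc^{s}$-bounded coefficients to a constant-coefficient operator for which the resolvent estimate is immediate from Step~2; the uniform a priori estimate along the homotopy and the method of continuity then upgrade injectivity to bijectivity of $\lambda+\mathcal{A}^{\pi}\colon\F^{s+l}(\R^m,X)\to\F^{s}(\R^m,X)$ for $\lambda\in S$, with the norm of the inverse bounded exactly as claimed. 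Unwinding Step~1 yields the statement for $\mathcal{A}$.

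\emph{Main obstacle.} The delicate point is Step~2: unlike $L_p$-based scales, Hölder spaces demand a genuinely non-trivial Fourier multiplier theorem, and the bound must be \emph{uniform} in the parameter $\lambda$, which forces one to track the scale $|\lambda|^{1/l}$ throughout the dyadic decomposition — equivalently, to work systematically with parameter-dependent symbol classes. The globalization in Step~3 is the second sensitive point, since Hölder norms do not localize as cleanly as $L_p$ norms, so the finite-overlap summation must be carried out carefully, exploiting precisely the uniform bounds on the partition of unity and the vanishing of the localized Hölder seminorms of the coefficients.
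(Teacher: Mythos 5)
The paper does not actually prove this proposition: it is imported verbatim from Amann \cite[Theorem~4.2, Remark~4.6]{Ama01}, so there is no internal argument to compare against. Your sketch reconstructs essentially the route taken in the cited source: reduction to the principal part by interpolation and a Neumann series, parameter-dependent symbol estimates for the frozen-coefficient operator handled by an operator-valued Fourier multiplier theorem on $B^{s}_{\infty,\infty}=BC^{s}$ ($s\notin\N$), and localization plus the method of continuity, with the $bc$-case obtained by density of $BC^{\infty}$. The one point worth making explicit is why Step~2 is legitimate for an \emph{arbitrary} Banach space $X$: Mikhlin-type multiplier theorems fail on $L_p(\R^m,X)$ without UMD-type hypotheses, but they do hold on vector-valued Besov spaces for every Banach space (Amann's 1997 multiplier theorem), since each dyadic block is handled by convolution with an $L_1$-kernel; this is precisely the reason the proposition is stated in the H\"older scale. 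With that external input acknowledged, your outline is correct and consistent in level of detail with the paper's own treatment, which consists of the citation alone.
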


\begin{theorem}
\label{generator of analytic semigroup-tensor}
Let $s> 0$ and $s\notin\N$. Suppose that ${\M}$ is a uniformly regular Riemannian manifold without boundary, and $\mathcal{A}$ is a $(\mathcal{E},\vartheta;2l)$-elliptic operator with $\boldsymbol{\mathfrak{a}}=(a^r)_r\in \prod_{r=0}^{2l} bc^{s}(\M,V^{\sigma+\tau+r}_{\tau+\sigma})$. 
Then there exist constant $\omega(\mathcal{E},\mathcal{K})$, $\mathcal{N}(\mathcal{E},\mathcal{K})$ such that $S:=\omega+\Sigma_{\vartheta}\subset \rho(-\mathcal{A})$ and
\begin{align*}
|\lambda|^{1-j}\|R(\lambda,-\mathcal{A})\|_{\L(\F^s(\M,V),\F^{s+2lj}(\M,V))}\leq \mathcal{N},\hspace{1em}\text{for } \lambda\in S,\hspace*{.5em} j=0,1
\end{align*}
with $\F\in\{bc,BC\}$. Here $\mathcal{E}$ is the ellipticity constant of $\mathcal{A}$ and $\mathcal{K}:=\max\limits_r \|a^r\|^{\M}_{s,\infty}$.
\end{theorem}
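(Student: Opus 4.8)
The plan is to transfer the Euclidean resolvent estimate of Proposition~\ref{S3: RE-Rm} to the manifold through the retraction-coretraction system $(\Rc,\Re)$ established in Theorem~\ref{retraction of BCk}, combined with a perturbation argument handling the lower order terms. First I would use the localization $(\pi_\kappa,\zeta_\kappa)_{\kappa}$ to pass to the local representations $\mathcal{A}_\kappa(x,\partial)=\sum_{|\alpha|\le 2l}a^\kappa_\alpha(x)\partial^\alpha$ on $\Xk=\R^m$. By hypothesis $\mathcal{A}$ is $(\mathcal{E},\vartheta;2l)$-elliptic, so each $\mathcal{A}_\kappa$ is $(\mathcal{E}',\vartheta;2l)$-elliptic on $\Qk$ with uniform constants, and by Proposition~\ref{differential operator coef} the coefficients satisfy $(a^\kappa_\alpha)_\kappa\in l_{\infty,\uf}(\boldsymbol{bc}^s)$ with $\sup_\kappa\|a^\kappa_\alpha\|_{s,\infty}$ controlled by $\mathcal{K}$. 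The main technical nuisance is that $\mathcal{A}_\kappa$ is only defined on the ball $\Qk$, not on all of $\R^m$, so I would first extend each $\mathcal{A}_\kappa$ to a globally defined operator $\tilde{\mathcal{A}}_\kappa$ on $\R^m$ which still is $(\mathcal{E}'',\vartheta;2l)$-elliptic with uniformly bounded $bc^s$-coefficients (e.g.\ by freezing the principal part outside $\supp\zeta$ in a way compatible with the cut-offs $\zeta_\kappa,\varpi_\kappa$), and agrees with $\mathcal{A}_\kappa$ on $\supp(\psi_\kappa^\ast\pi_\kappa)$. Then Proposition~\ref{S3: RE-Rm} applies to each $\tilde{\mathcal{A}}_\kappa$, giving $\omega,\mathcal{N}$ \emph{independent of} $\kappa$ (since they depend only on the uniform ellipticity and coefficient bounds) such that $\omega+\Sigma_\vartheta\subset\rho(-\tilde{\mathcal{A}}_\kappa)$ with $|\lambda|^{1-j}\|R(\lambda,-\tilde{\mathcal{A}}_\kappa)\|_{\L(\F^s,\F^{s+2lj})}\le\mathcal{N}$.

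Next I would assemble a parametrix. Define, for $\lambda\in\omega+\Sigma_\vartheta$, the operator
\begin{align*}
R_\lambda:=\Re\,\mathrm{diag}(R(\lambda,-\tilde{\mathcal{A}}_\kappa))_\kappa\,\Rc\in\L(\F^s(\M,V),\F^{s+2l}(\M,V)),
\end{align*}
whose norm obeys $|\lambda|^{1-j}\|R_\lambda\|_{\L(\F^s,\F^{s+2lj})}\le C\mathcal{N}$ by the uniform local bounds and the boundedness of $\Re,\Rc$ on the spaces $l_\infty(\boldsymbol{\F}^{s+2lj})$ (using that the family $(R(\lambda,-\tilde{\mathcal{A}}_\kappa))_\kappa$ acts diagonally and is uniformly bounded, so it maps $l_\infty(\boldsymbol\F^s)$ to $l_\infty(\boldsymbol\F^{s+2lj})$). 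Then I would compute $(\lambda+\mathcal{A})R_\lambda$. Using $\Rc\Re=\id$ on the relevant subspaces and the intertwining relation $\Rc(\mathcal{A}u)=(\mathcal{A}_\kappa \Rc_\kappa u)_\kappa$ modulo commutators, one gets $(\lambda+\mathcal{A})R_\lambda=\id+B_\lambda$ where $B_\lambda$ collects (i) commutator terms between $\mathcal{A}$ and the cut-offs $\pi_\kappa$, which are differential operators of order $\le 2l-1$ hence by Proposition~\ref{pointwise multiplication properties}, \eqref{section 2: gradient} and interpolation give a gain of at least one derivative, and (ii) the discrepancy between $\mathcal{A}_\kappa$ and $\tilde{\mathcal{A}}_\kappa$, which vanishes on $\supp(\psi_\kappa^\ast\pi_\kappa)$ and so is killed by the outer $\pi_\kappa$. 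The key estimate is $\|B_\lambda\|_{\L(\F^s(\M,V))}\le C/|\lambda|^{1/(2l)}\to 0$ as $|\lambda|\to\infty$ in $\Sigma_\vartheta$, obtained by interpolating the order-$(2l-1)$ lower-order bound against the resolvent decay, exactly as in the Euclidean perturbation lemma. Enlarging $\omega$ so that $\|B_\lambda\|\le 1/2$ on $\omega+\Sigma_\vartheta$, the Neumann series gives $(\lambda+\mathcal{A})^{-1}=R_\lambda(\id+B_\lambda)^{-1}$, which yields $\omega+\Sigma_\vartheta\subset\rho(-\mathcal{A})$ and the desired bound with a new $\mathcal{N}$ depending only on $\mathcal{E},\mathcal{K}$; a symmetric argument with $R_\lambda(\lambda+\mathcal{A})$ shows $\mathcal{A}$ is also injective with dense range, confirming $\lambda+\mathcal{A}$ is an isomorphism.

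I would carry the argument out for $\F=bc$ first and deduce the $\F=BC$ case by the interpolation identities of Proposition~\ref{interpolation of LH spaces} (writing $BC^s$ between little-H\"older spaces), or simply run the same parametrix construction in $BC^s$ since Proposition~\ref{S3: RE-Rm} is stated for both scales; one must only be careful that $\Re,\Rc$ are retraction-coretraction on $l_\infty(\boldsymbol{BC}^s)$ (Theorem~\ref{retraction of BCk}), which holds for $s\notin\N$. The main obstacle, I expect, is the bookkeeping around the localized operators: verifying that the extensions $\tilde{\mathcal{A}}_\kappa$ can be chosen with \emph{uniform} ellipticity and coefficient bounds while still agreeing with $\mathcal{A}_\kappa$ where the cut-offs live, and then carefully identifying all commutator terms in $(\lambda+\mathcal{A})R_\lambda-\id$ and checking each is lower order with coefficients bounded uniformly in $\kappa$ (this uses (R2), (L3), the uniform $bc^s$ control from Proposition~\ref{differential operator coef}, and the point-wise multiplier Proposition~\ref{pointwise multiplication properties}). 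The actual resolvent decay of the remainder is then a routine interpolation estimate once the lower-order structure is in hand.
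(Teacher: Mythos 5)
Your overall architecture --- extend the local operators $\mathcal{A}_\kappa$ to globally defined $(\mathcal{E},\vartheta;2l)$-elliptic operators on $\R^m$ with uniformly bounded $bc^s$-coefficients, invoke Proposition~\ref{S3: RE-Rm}, and treat the commutators with the cut-offs as lower-order perturbations removable by interpolation and a Neumann series --- is exactly the paper's strategy, including the two-sided argument (one commutator family for surjectivity, one for injectivity) and the choice of $\F^{s+2l-1}$ as the intermediate space. Your extension by freezing the coefficients outside $\supp\zeta$ is the same device as the paper's substitution $\bar a^\kappa_\alpha=a^\kappa_\alpha\circ h$ with $h(x)=\zeta(x)x$, and your $|\lambda|^{-1/(2l)}$ decay of the remainder is the same interpolation estimate the paper performs in steps (iv)--(v).

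The one place where your route genuinely diverges, and where it has a gap, is the application of Proposition~\ref{S3: RE-Rm} $\kappa$-by-$\kappa$ followed by the claim that the diagonal family $(R(\lambda,-\tilde{\mathcal{A}}_\kappa))_\kappa$ maps $l_\infty(\boldsymbol{\F}^s)$ to $l_\infty(\boldsymbol{\F}^{s+2lj})$. For $\F=BC$ this suffices, since the constants $\omega,\mathcal{N}$ depend only on $(\mathcal{E},\mathcal{K})$ and hence are uniform in $\kappa$. But for $\F=bc$ the coretraction $\Rc$ lands in $l_{\infty,\uf}(\boldsymbol{bc}^s)$, and the retraction $\Re$ must be fed an element of $l_{\infty,\uf}(\boldsymbol{bc}^{s+2l})$ in order to produce an element of $bc^{s+2l}(\M,V)$ (Theorem~\ref{retraction of BCk}); a uniform bound on the operator norms of the individual resolvents does not by itself give the uniform-in-$\kappa$ vanishing of the H\"older seminorms required by \eqref{S2: infnty,uf}. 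The paper avoids this by never applying Proposition~\ref{S3: RE-Rm} diagonally: using the isomorphism $\f$ of \eqref{section 2:Lis bc^s}, it packages the whole family into a single operator $\boldsymbol{\bar{\mathcal{A}}}$ with coefficients in $bc^s(\R^m,\L(l_\infty(\boldsymbol{E})))$, applies the Banach-space-valued Proposition~\ref{S3: RE-Rm} once with $X=l_\infty(\boldsymbol{E})$, and transports the resolvent back through $\f$; the $l_{\infty,\uf}$ mapping property you need is then exactly what $\f$ delivers. Your argument becomes complete once the diagonal invocation is replaced by this vector-valued one (or once the moduli of continuity are tracked uniformly in $\kappa$ by hand); the rest of your bookkeeping of commutators, which is the ``main obstacle'' you flag, is carried out in the paper precisely as you describe.
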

\begin{proof}
To economize notations, we set 
\begin{align*}
E_0=\F^{s},\hspace{.5em} E_{\theta}=\F^{s+2l-1},\hspace{.5em}\text{ and } E_1=\F^{s+2l}.
\end{align*}
In virtue of Proposition~\ref{differential operator coef} and the discussions at the beginning of this section, we know that there exist constants $\mathcal{E}^{\prime}$ and $\mathcal{K}^{\prime}$ such that all localizations $\mathcal{A}_{\kappa}$'s are $(\mathcal{E}^{\prime},\vartheta;2l)$-elliptic and their coefficients satisfy
\begin{equation*}
(a^{\kappa}_{\alpha})_{\kappa}\in l_{\infty,\uf}({bc^{s}}({\Qk},\L(E)))
\quad\text{with}\quad
 \max\limits_{|\alpha|\leq 2l}\sup\limits_{\kappa}\|a^{\kappa}_{\alpha}\|_{s,\infty}\leq \mathcal{K}^{\prime}. 
\end{equation*}
Without loss of generality, we may assume that $\mathcal{E}=\mathcal{E}^{\prime}$ and $\mathcal{K}=\mathcal{K}^{\prime}$.

(i) For simplicity, we first assume that $s\in (0,1)$. It can be easily seen through step (ii) of \cite[Theorem~4.1]{Ama01} that this assumption will not harm our proof. Define $h:{\R}^m\rightarrow\Q$: $x\mapsto\zeta(x)x$. It is easy to see that $h\in{BC^{\infty}({\R}^m,{\Q})}$. Let
\begin{equation*}
\bar{\mathcal{A}}_{\kappa}(x,\partial):=\sum\limits_{|\alpha|\leq{2l}}\bar{a}_{\alpha}^{\kappa}(x)\partial^{\alpha}:=\sum\limits_{|\alpha|\leq{2l}}(a_{\alpha}^{\kappa}\circ h)(x)\partial^{\alpha}.
\end{equation*}
It is not hard to check that $(\bar{a}_{\alpha}^{\kappa})_{\kappa}\in l_{\infty,\uf}({\boldsymbol{bc}^{s}}(\R^m,\L(E)))$ and $\bar{\mathcal{A}}_{\kappa}$ is $(\mathcal{E},\vartheta;2l)$- elliptic on ${\R}^m$. 

(ii) For any $\lambda\in\mathbb{C}$ and $u\in{E_1}$, consider
\begin{align*}
\hspace*{1.2em} \Rck(\lambda+\mathcal{A})u-(\lambda+\bar{\mathcal{A}}_{\kappa})\Rck u
&={\kfk}(\pi_{\kappa}(\lambda+\mathcal{A})u)-(\lambda+\bar{\mathcal{A}}_{\kappa}){\kfk}(\pi_{\kappa}u)\\
&={\kfk}\pi_{\kappa}(\lambda+\bar{\mathcal{A}}_{\kappa}){\kfk}u-(\lambda+\bar{\mathcal{A}}_{\kappa}){\kfk}(\pi_{\kappa}u) \\
&={\kfk}\pi_{\kappa}\bar{\mathcal{A}}_{\kappa}{\kfk}u-\bar{\mathcal{A}}_{\kappa}{\kfk}(\pi_{\kappa}u)\\
&=-\sum\limits_{|\alpha|\leq{2l}}\sum\limits_{0<\beta\leq\alpha}\binom{\alpha}{\beta}\bar{a}_{\alpha}^{\kappa}\partial^{\alpha-\beta}(\zeta{\kfk}u)\partial^{\beta}({\kfk}\pi_{\kappa}).
\end{align*}
In the rest of the proof, we always conventionally put
\begin{align*}
b=``\infty" \text{ for }\F=BC,\text{ or }b=``\infty,\uf" \text{ for }\F=bc.
\end{align*}
Using the proof of \cite[Theorem~12.1]{Ama12} and interpolation theory, one 
readily checks that for $t\geq 0$
\begin{align}
\label{S3: pull onto Euc by zeta}
[u\mapsto (\zeta {\kfk}u)_{\kappa}]\in\L(\F^t(\M,V),l_{b}(\boldsymbol{\F}^t)).
\end{align}
By Proposition~\ref{pointwise multiplication properties}, the following map is bilinear and continuous for $\F\in\{bc,BC\}$
\begin{align}
\label{S3: pointwise-mul-Rm}
({\F^t(\R^m,\L(E))},{\F^t(\R^m,E)})\rightarrow{\F^t(\R^m,E)}:(F,v)(x)\mapsto F(x)(v(x))
\end{align}
for $t\notin\N_0$. 
In view of \eqref{S3: pull onto Euc by zeta} and \eqref{S3: pointwise-mul-Rm}, we conclude that
\begin{equation*}
\Rck((\lambda+\mathcal{A})\cdot)-(\lambda+\bar{\mathcal{A}}_{\kappa})\Rck(\cdot):=B_{\kappa}(\cdot)\in\mathcal{L}(E_{\theta}(\M,V),E_{0}(\R^m,E)).
\end{equation*}  
Analogously, we can verify that
\begin{equation*}
(\lambda+\mathcal{A})\Rek(\cdot)-\Rek((\lambda+\bar{\mathcal{A}}_{\kappa})\cdot):=C_{\kappa}(\cdot)\in\mathcal{L}(E_{\theta}(\R^m,E),E_0(\M,V)).
\end{equation*}
\smallskip 
(iii) Set $\bar{\mathcal{A}}:l_{b}(\boldsymbol{E}_{1})\rightarrow \prod_{\kappa}\boldsymbol{\F}^s_{\kappa}$: $(u_{\kappa})_{\kappa}\mapsto(\bar{\mathcal{A}}_{\kappa}u_{\kappa})_{\kappa}$. By Proposition \ref{pointwise multiplication properties} and \eqref{section 2: Holder/bundle}, one attains
\begin{align*}
\bar{\mathcal{A}}\in\mathcal{L}(l_{b}(\boldsymbol{E}_{1}),l_{b}(\boldsymbol{E}_{0})).
\end{align*}
Put $\boldsymbol{u}=(u_{\kappa})_{\kappa}$ and $\boldsymbol{a}_{\alpha}=(\bar{a}_{\alpha}^{\kappa})_{\kappa}$. Define $\boldsymbol{\bar{\mathcal{A}}}:E_1(\R^m,l_{\infty}(\boldsymbol{E}))\rightarrow E_0(\R^m,l_{\infty}(\boldsymbol{E}))$ by
\begin{align*}
\boldsymbol{\bar{\mathcal{A}}}:=\sum\limits_{|\alpha|\leq 2l}\f^{-1}(\boldsymbol{a}_{\alpha})\partial^{\alpha}.
\end{align*}
Then by \cite[formula~(11.29)]{Ama12}, we obtain 
\begin{align}
\label{S3: F-A}
\f^{-1}(\bar{\mathcal{A}}\boldsymbol{u})=\boldsymbol{\bar{\mathcal{A}}} \f^{-1}(\boldsymbol{u})=\sum\limits_{|\alpha|\leq 2l}\f^{-1}(\boldsymbol{a}_{\alpha})\partial^{\alpha}\f^{-1}(\boldsymbol{u}).
\end{align}
Theorem \ref{retraction of BCk} implies that $\f^{-1}(\boldsymbol{a}_{\alpha})\in bc^s(\R^m,l_{\infty}(\L(E)))\hookrightarrow bc^s(\R^m,\L(l_{\infty}(\boldsymbol{E})))$. By the $(\mathcal{E},\vartheta;2l)$-ellipticity of $\bar{\mathcal{A}}_{\kappa}$ and the uniform ellipticity constant $\mathcal{E}$ in \eqref{section 6:unif elpt 1}, we conclude that $\boldsymbol{\bar{\mathcal{A}}}$ is $(\mathcal{E},\vartheta;2l)$-elliptic, i.e., for all $(x,\xi)\in\R^m\times\R^m$ with $|\xi|=1$
\begin{align*}
(1+|\lambda|)\|R(\lambda,-\hat{\sigma}\boldsymbol{\bar{\mathcal{A}}}^{\pi}(x,\xi))\|_{\L(l_{\infty}(\boldsymbol{E}))}\leq \mathcal{E}, \hspace{1em}\lambda\in\Sigma_{\vartheta}.
\end{align*}
Set $\hat{E}_j:=E_j(\R^m,l_{\infty}(\boldsymbol{E}))$ for $j=0,1$. Proposition \ref{S3: RE-Rm} now yields the existence of constants $\omega_0$ and $M_0$ such that $S_0:=\omega_0+\Sigma_{\vartheta}\subset \rho(-\boldsymbol{\bar{\mathcal{A}}})$ and 
\begin{align*}
|\lambda|^{1-j}\|R(\lambda,-\boldsymbol{\bar{\mathcal{A}}}\|_{\L(\hat{E}_0,\hat{E}_j)}\leq M_0, \hspace*{1em}\lambda\in S_0,\hspace*{.5em}j=0,1.
\end{align*}
Pick $\boldsymbol{u}\in l_{b}(\boldsymbol{E}_1)$. By \eqref{section 2:Lis bc^s} and \eqref{S3: F-A} we have for every $\lambda\in S_0$
\begin{align*}
\f(\lambda+\boldsymbol{\bar{\mathcal{A}}})^{-1}\f^{-1}(\lambda+\bar{\mathcal{A}})\boldsymbol{u}=\f(\lambda+\boldsymbol{\bar{\mathcal{A}}})^{-1}(\lambda+\boldsymbol{\bar{\mathcal{A}}})\f^{-1}\boldsymbol{u}=\boldsymbol{u}.
\end{align*}
Similarly, by \cite[formula~(11.29)]{Ama12}
\begin{align*}
(\lambda+\bar{\mathcal{A}})\f(\lambda+\boldsymbol{\bar{\mathcal{A}}})^{-1} \f^{-1}\boldsymbol{u}=\f (\lambda+\boldsymbol{\bar{\mathcal{A}}})(\lambda+\boldsymbol{\bar{\mathcal{A}}})^{-1}\f^{-1}\boldsymbol{u}=\boldsymbol{u}.
\end{align*}
Thus $S_0\subset \rho(-\bar{\mathcal{A}})$. Furthermore, $(\lambda+\bar{\mathcal{A}})^{-1}=\f(\lambda+\boldsymbol{\bar{\mathcal{A}}})^{-1}\f^{-1}$ and 
\begin{align}
\label{section 6: type of bA-LH}
|\lambda|^{1-j}\|R(\lambda,-\bar{\mathcal{A}})\|_{\L(l_{b}(\boldsymbol{E}_0),l_{b}(\boldsymbol{E}_j))} \leq {K_0}, \hspace*{1em}\lambda\in S_0,\hspace*{.5em}j=0,1
\end{align}
for some $K_0 \geq 1$. 

(iv) Define $B:E_{\theta}(\M,V)\rightarrow \prod_{\kappa}\boldsymbol{\F}^s_{\kappa}$: $u\mapsto(B_{\kappa}u)_{\kappa}$. By \eqref{S3: pull onto Euc by zeta}, we have
\begin{center}
$B\in\mathcal{L}(E_{\theta}(\M,V),l_{b}(\boldsymbol{E}_{0}))$.
\end{center}
By Theorem \ref{retraction of BCk}, we have that $B\Re\in\mathcal{L}(l_{b}(\boldsymbol{E}_{\theta}),l_{b}(\boldsymbol{E}_{0}))$. It follows from \eqref{section 2:Lis bc^s}, \eqref{section 2:Lis BC^s} and \cite[Proposition~I.2.3.2]{Ama95} that 
\begin{align*}
l_{b}(\boldsymbol{E}_{\theta})\doteq (l_{b}(\boldsymbol{E}_{0}),l_{b}(\boldsymbol{E}_{1}))_{\theta},
\end{align*}
where either $(\cdot,\cdot)_{\theta}=(\cdot,\cdot)^0_{\theta,\infty}$ for $\F=bc$, or $(\cdot,\cdot)_{\theta}=(\cdot,\cdot)_{\theta,\infty}$ for $\F=BC$, and $\theta=1-1/(2l)$. 
Thus $B\Re$ is a lower order perturbation. For each $\boldsymbol{u}\in l_{b}(\boldsymbol{E}_{0})$ and $\lambda\in S_0$, one computes
\begin{align*}
&\hspace*{1.2em} \|B\mathcal{R}(\lambda+\bar{\mathcal{A}})^{-1}\boldsymbol{u}\|_{l_{b}(\boldsymbol{E}_{0})}\\
 &\leq M \varepsilon\|(\lambda+\bar{\mathcal{A}})^{-1}\boldsymbol{u}\|_{l_{b}(\boldsymbol{E}_{1})}+M C(\varepsilon)\|(\lambda+\bar{\mathcal{A}})^{-1}\boldsymbol{u}\|_{l_{b}(\boldsymbol{E}_{0})}\\
&\leq M K_0  \varepsilon \|\boldsymbol{u}\|_{l_{b}(\boldsymbol{E}_{0})} +M C(\varepsilon) \frac{K_0}{|\lambda|}\|\boldsymbol{u}\|_{l_{b}(\boldsymbol{E}_{0})}.
\end{align*}
The penultimate line is a consequence of interpolation theory \cite[formula~(I.2.2.2)]{Ama95}.
The last inequality follows from \eqref{section 6: type of bA-LH}. By choosing $\varepsilon$ small enough and a sufficiently large $\omega_1\geq \omega_0$, we can conclude that $\|B\mathcal{R}(\lambda+\bar{\mathcal{A}})^{-1}\|\leq \frac{1}{2}$. By a Neumann series argument, we infer that $S_1:=\omega_1+\Sigma_{\vartheta}\subset \rho(-\bar{\mathcal{A}}-B\Re)$ and
\begin{align*}
|\lambda|^{1-j}\|R(\lambda,-\bar{\mathcal{A}}-B\Re)\|_{\L(l_{b}(\boldsymbol{E}_0),l_{b}(\boldsymbol{E}_j))}\leq M_1, \hspace*{1em}\lambda\in S_1,\hspace*{.5em}j=0,1,
\end{align*}
for some $M_1$. For any $\lambda\in{S_1}$, $(\lambda+\bar{\mathcal{A}}+B\Re)^{-1}$ exists. Moreover,
\begin{align*} 
\Re(\lambda+\bar{\mathcal{A}}+B\Re)^{-1}\Rc(\lambda+\mathcal{A})
=\Re(\lambda+\bar{\mathcal{A}}+B\Re)^{-1}(\lambda+\bar{\mathcal{A}}+B\Re)\Rc={\id}_{E_1(\M,V)}.
\end{align*}
Therefore, $\lambda+\mathcal{A}$ is injective.

(v) Define $C:l_{b}(\boldsymbol{E}_{\theta})\rightarrow{E_0(\M,V)}$: $(u_{\kappa})_{\kappa}\mapsto{\sum\limits_{\kappa}C_{\kappa}(u_{\kappa})}$. Then analogously we can verify that $C\in\mathcal{L}(l_{b}(\boldsymbol{E}_{\theta}),E_0(\M,V))$ and in turn $\Rc{C}\in\mathcal{L}(l_{b}(\boldsymbol{E}_{\theta}),l_{b}(\boldsymbol{E}_{0}))$. 
\smallskip\\
For any $\boldsymbol{u} \in l_{b}(\boldsymbol{E}_{1})$, we have
\begin{align*}
(\lambda+\mathcal{A})\Re \boldsymbol{u}=\Re(\lambda+\bar{\mathcal{A}})\boldsymbol{u}+\Re\Rc{C}\boldsymbol{u}=\Re(\lambda+\bar{\mathcal{A}}+\Rc{C})\boldsymbol{u}.
\end{align*}
An analogous argument to (iv) reveals that there exist $\omega\geq \omega_1$ and $\mathcal{N}>M_1$ such that $S:=\omega+\Sigma_{\vartheta} \subset\rho(-\bar{\mathcal{A}}-\Rc{C})$ and
\begin{align*}
|\lambda|^{1-j}\|R(\lambda,-\bar{\mathcal{A}}-\Rc C)\|_{\L(l_{b}(\boldsymbol{E}_0),l_{b}(\boldsymbol{E}_j))}\leq \mathcal{N}, \hspace*{1em}\lambda\in S,\hspace*{.5em}j=0,1.
\end{align*}
Moreover, these constants depend only on $\mathcal{E}$ and $\mathcal{K}$. For any $\lambda\in S$, $(\lambda+\bar{\mathcal{A}}+\Rc C)^{-1}$ exists and $(\lambda+\mathcal{A})\Re(\lambda+\bar{\mathcal{A}}+\Rc C)^{-1}\Rc=\id_{E_0(\M,V)}$. Thus $(\lambda+\mathcal{A})$ is surjective. Now we conclude that $S \subset\rho(-\mathcal{A})$, and for every $\lambda\in S$, $u\in E_0(\M,V)$
\begin{align*}
|\lambda|^{1-j}\|R(\lambda,-\mathcal{A})u\|_{E_j}^{\M} &=|\lambda|^{1-j}\|\Re R(\lambda,-\bar{\mathcal{A}}-\Rc C) \Rc u\|_{E_j}^{\M}\\
&\leq C_0|\lambda|^{1-j}\|R(\lambda,-\bar{\mathcal{A}}-\Rc C) \Rc u\|_{l_b(\boldsymbol{E}_j)} \leq C^2_0 \mathcal{N}\|u\|_{E_0}^{\M}.
\end{align*}
\end{proof}
\begin{remark}
With the necessary modification, the above proof also works for Banach-valued functions defined on a uniformly regular manifold without boundary, provided the counterparts of these spaces and the elliptic conditions are properly defined.
\end{remark}

Recall an operator ${A}$ is said to belong to the class $\mathcal{H}(E_1,E_0)$ for some densely embedded Banach couple $E_1\overset{d}{\hookrightarrow}E_0$, if $-{A}$ generates a strongly continuous analytic semigroup on $E_0$ with $dom(-{A})=E_1$. By the well-known semigroup theory, Theorem \ref{generator of analytic semigroup-tensor} immediately implies
\begin{theorem}
\label{S3:main theorem}
Suppose $\mathcal{A}$ is normally elliptic of order $2l$ and the coefficients $\boldsymbol{\mathfrak{a}}=(a^r)_r$ satisfy the conditions in Theorem \ref{generator of analytic semigroup-tensor}. Then
\begin{align*}
\mathcal{A}\in\mathcal{H}(bc^{s+2l}(\M,V),bc^{s}(\M,V)).
\end{align*}
\end{theorem}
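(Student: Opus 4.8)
The plan is to read the assertion off the resolvent estimate already established in Theorem~\ref{generator of analytic semigroup-tensor}, via the standard characterization of the class $\mathcal{H}(E_1,E_0)$ from analytic semigroup theory. Since $\mathcal{A}$ is normally elliptic of order $2l$, it is $(\mathcal{E},\pi/2;2l)$-elliptic for some $\mathcal{E}\geq 1$, and the coefficients satisfy $\boldsymbol{\mathfrak{a}}=(a^r)_r\in\prod_{r=0}^{2l}bc^{s}(\M,V^{\sigma+\tau+r}_{\tau+\sigma})$ with $s>0$, $s\notin\N$. Writing $E_0:=bc^{s}(\M,V)$ and $E_1:=bc^{s+2l}(\M,V)$, I would apply Theorem~\ref{generator of analytic semigroup-tensor} with $\vartheta=\pi/2$ to obtain constants $\omega=\omega(\mathcal{E},\mathcal{K})$ and $\mathcal{N}=\mathcal{N}(\mathcal{E},\mathcal{K})$, with $\mathcal{K}=\max_r\|a^r\|^{\M}_{s,\infty}$, such that
\[
\omega+\Sigma_{\pi/2}\subset\rho(-\mathcal{A}),\qquad |\lambda|^{1-j}\|R(\lambda,-\mathcal{A})\|_{\L(E_0,E_j)}\leq\mathcal{N}\quad(\lambda\in\omega+\Sigma_{\pi/2},\ j=0,1).
\]

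The second step is to deduce that $-\mathcal{A}$ generates a strongly continuous analytic semigroup on $E_0$. The containment $\omega+\Sigma_{\pi/2}\subset\rho(-\mathcal{A})$ together with the decay $\|R(\lambda,-\mathcal{A})\|_{\L(E_0)}\leq\mathcal{N}/|\lambda|$ is exactly the hypothesis of the classical generation theorem of Da~Prato--Grisvard type: a Neumann series expansion of $R(\,\cdot\,,-\mathcal{A})$ around points of the line $\mathrm{Re}\,\lambda=\omega$ enlarges the resolvent set to a true sector $\omega'+\Sigma_{\pi/2+\varepsilon}$ for some $\varepsilon=\varepsilon(\mathcal{N})>0$, with the bound $\|R(\lambda,-\mathcal{A})\|_{\L(E_0)}\leq C/|\lambda-\omega'|$ persisting there, and then the Dunford integral $\frac{1}{2\pi i}\int_{\Gamma}e^{t\lambda}R(\lambda,-\mathcal{A})\,d\lambda$ over a contour $\Gamma$ around this sector defines an analytic semigroup on $E_0$. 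I would simply invoke this, citing \cite{DaPra88, Ama95, Lunar95}, rather than reproduce the contour estimates.

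It then remains to identify the domain and to check strong continuity. The $j=1$ part of the estimate shows $R(\lambda,-\mathcal{A})\in\L(E_0,E_1)$, so $\mathrm{dom}(-\mathcal{A})=R(\lambda,-\mathcal{A})E_0\subseteq E_1$; conversely $\mathcal{A}\in\L(E_1,E_0)$ by Proposition~\ref{differential operator properties 2}, and for $\lambda\in\omega+\Sigma_{\pi/2}$ the bounded operator $\lambda+\mathcal{A}\colon E_1\to E_0$ is, by Theorem~\ref{generator of analytic semigroup-tensor}, a bijection with inverse $R(\lambda,-\mathcal{A})$; hence $\mathrm{dom}(-\mathcal{A})=E_1=bc^{s+2l}(\M,V)$. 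Finally, Proposition~\ref{embedding theory} yields $E_1\overset{d}{\hookrightarrow}E_0$, so the domain is dense in $E_0$ and the analytic semigroup is strongly continuous. Combining these observations gives $\mathcal{A}\in\mathcal{H}(bc^{s+2l}(\M,V),bc^{s}(\M,V))$.

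Given Theorem~\ref{generator of analytic semigroup-tensor}, I do not anticipate a genuine obstacle; this is a routine translation of a resolvent estimate into semigroup generation. The only point deserving a word of care is the passage from the half-plane $\Sigma_{\pi/2}$ to a sector of opening strictly larger than $\pi/2$ (needed for analyticity rather than mere strong continuity), but this is the standard Neumann-series perturbation argument, and all the constants it produces stay controlled by $\mathcal{E}$ and $\mathcal{K}$ alone, exactly as in Theorem~\ref{generator of analytic semigroup-tensor}.
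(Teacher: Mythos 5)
Your proposal is correct and follows the same route as the paper, which simply invokes ``the well-known semigroup theory'' to pass from the resolvent estimate of Theorem~\ref{generator of analytic semigroup-tensor} (with $\vartheta=\pi/2$) to generation of a strongly continuous analytic semigroup with domain $E_1=bc^{s+2l}(\M,V)$; you have merely spelled out the standard Neumann-series/Dunford-integral details, the domain identification via the $j=1$ estimate, and the density $E_1\overset{d}{\hookrightarrow}E_0$ from Proposition~\ref{embedding theory}. No gaps.
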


For some fixed interval $I=[0,T]$, $\gamma\in(0,1]$ and some Banach space $X$, we define
\begin{align*}
&BU\!C_{1-\gamma}(I,X):=\{u\in{C(\dot{I},X)};[t\mapsto{t^{1-\gamma}}u]\in{BU\!C(\dot{I},X)},\lim\limits_{t\to{0^+}}{t^{1-\gamma}}\|u\|=0\},\\
& \|u\|_{C_{1-\gamma}}:=\sup_{t\in{\dot{I}}}{t^{1-\gamma}}\|u(t)\|_{X},
\end{align*}
and
\begin{center}
$BU\!C_{1-\gamma}^1(I,X):=\{u\in{C^1(\dot{I},X)}: u,\dot{u}\in{BU\!C_{1-\gamma}(I,X)}\}$.
\end{center}
In particular, we put
\begin{center}
$BU\!C_0(I,X):=BU\!C(I,X)$\hspace*{1em} and \hspace*{1em} $BU\!C^1_0(I,X):=BU\!C^1(I,X)$.
\end{center}
In addition, if $I=[0,T)$ is a half open interval, then
\begin{align*}
&C_{1-\gamma}(I,X):=\{v\in{C(\dot{I},X)}:v\in{BU\!C_{1-\gamma}([0,t],X)},\hspace{.5em} t<T\},\\
&C^1_{1-\gamma}(I,X):=\{v\in{C^1(\dot{I},X)}:v,\dot{v}\in{C_{1-\gamma}(I,X)}\}.
\end{align*}
We equip these two spaces with the natural Fr\'echet topology induced by the topology of $BU\!C_{1-\gamma}([0,t],X)$ and $BU\!C_{1-\gamma}^1([0,t],X)$, respectively. 
\smallskip\\
Assume that $E_1\overset{d}{\hookrightarrow}E_0$ is a densely embedded Banach couple. Define
\begin{align}
\label{S3: ez&ef}
{\ez}(I):=BU\!C_{1-\gamma}(I,E_0), \hspace{1em} {\ef}(I):=BU\!C_{1-\gamma}(I,E_1)\cap{BU\!C_{1-\gamma}^1(I,E_0)}.
\end{align}
For ${A}\in\mathcal{H}(E_1,E_0)$, we say $({\ez}(I),{\ef}(I))$ is a pair of maximal regularity of ${A}$, if
\begin{center}
$(\frac{d}{dt}+{A},\gamma_{0})\in{\Lis}({\ef}(I),{\ez}(I)\times{E_{\gamma}})$, 
\end{center}
where $\gamma_{0}$ is the evaluation map at $0$, i.e., $\gamma_{0}(u)=u(0)$, and $E_{\gamma}:=(E_0,E_1)_{\gamma,\infty}^0$. Symbolically, we denote it by
\begin{align*}
{A}\in \mathcal{M}_{\gamma}(E_1,E_0).
\end{align*}
Let $E_2:=(dom({A}^2),\|\cdot\|_{E_2})$, where $\|\cdot\|_{E_2}:=\|{A}\cdot\|_{E_1}+\|\cdot\|_{E_1}$. Put
\begin{itemize}
\item $E_{1+\theta}:=(E_1,E_2)_{\theta,\infty}^0$,\hspace{1em}$\theta\in (0,1)$,
\item ${A}_{\theta}:=$ the maximal $E_{\theta}$-realization of ${A}$.
\end{itemize}
The next step is to conclude a maximal regularity result for normally elliptic operators. To this end, we quote a famous theorem, which was first proved by G. Da~Prato and P. Grisvard \cite{DaPra79}, and then generalized later by S.~Angenent.
\begin{theorem}{\cite[Theorem~2.14]{Ange90}}
\label{Da Prato, Grisvard}
Suppose $A\in \mathcal{H}(E_1,E_0)$ and let $\gamma\in(0,1]$. Then
\begin{center}
$(\mathbb{E}_{\theta}(I),\mathbb{E}_{1+\theta}(I)):=(BU\!C_{1-\gamma}(I,E_{\theta}),BU\!C_{1-\gamma}(I,E_{1+\theta})\cap{BU\!C_{1-\gamma}^1(I,E_{\theta})})$
\end{center}
is a pair of maximal regularity for ${A}_{\theta}$, that is,
\begin{center}
$(\frac{d}{dt}+{A}_{\theta},\gamma_{0})\in{\Lis}(\mathbb{E}_{1+\theta}(I),\mathbb{E}_{\theta}(I)\times(E_{\theta},E_{1+\theta})_{\gamma,\infty}^0)$.
\end{center}
\end{theorem}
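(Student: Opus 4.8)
The plan is to produce the two–sided inverse of $(\frac{d}{dt}+A_\theta,\gamma_0)$ explicitly from the analytic semigroup $\{e^{-tA}\}_{t\geq 0}$ (restricted to $E_\theta$) via the variation–of–constants formula; the decisive structural point is that, since $0<\theta<1$ is \emph{strictly} positive, the singular convolution kernel $s\mapsto Ae^{-sA}$ becomes just integrable enough once one measures in the continuous interpolation space $E_\theta$. First recall that $A\in\mathcal{H}(E_1,E_0)$ forces $A_\theta\in\mathcal{H}(E_{1+\theta},E_\theta)$, with $E_{1+\theta}=\mathrm{dom}(A_\theta)$ (graph norm) coinciding with $(E_1,E_2)^0_{\theta,\infty}$, and that the continuous interpolation space admits the semigroup description $E_\theta=\{x\in E_0:\ s^{1-\theta}\|Ae^{-sA}x\|_{E_0}\to 0\ (s\to 0^+)\}$, the norm being the supremum of that quantity together with $\|x\|_{E_0}$. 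From this one extracts the estimates $\|e^{-sA}\|_{\L(E_\theta)}\leq C$ and $\|A^ke^{-sA}\|_{\L(E_\theta,E_0)}\leq C_k s^{\theta-k}$, $k\in\N$, which are the only analytic tools needed; one also uses that by reiteration $(E_\theta,E_{1+\theta})^0_{\gamma,\infty}=D_{A_\theta}(\gamma)$, the analogous continuous interpolation space over the pair $(E_\theta,E_{1+\theta})$.

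Injectivity of $(\frac{d}{dt}+A_\theta,\gamma_0)$ is the easy half: if $u\in\mathbb{E}_{1+\theta}(I)$ solves $\dot u+A_\theta u=0$ with $u(0)=0$, then $u(t)=e^{-(t-r)A}u(r)$ for $0<r<t$; the trace embedding $\mathbb{E}_{1+\theta}(I)\hookrightarrow C(I,(E_\theta,E_{1+\theta})^0_{\gamma,\infty})$ gives $u(r)\to u(0)=0$ in $E_\theta$ as $r\to 0^+$, whence $u\equiv 0$ on $\dot I$ and then on $I$.

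For surjectivity, given $(f,u_0)\in\mathbb{E}_\theta(I)\times(E_\theta,E_{1+\theta})^0_{\gamma,\infty}$, I would set $u:=u_1+u_2$ with $u_1(t):=e^{-tA}u_0$ and $u_2(t):=\int_0^t e^{-(t-s)A}f(s)\,ds$. That $u_1\in\mathbb{E}_{1+\theta}(I)$ with $u_1(0)=u_0$ is, essentially by definition, equivalent to $u_0\in D_{A_\theta}(\gamma)=(E_\theta,E_{1+\theta})^0_{\gamma,\infty}$: one has $e^{-tA}u_0\in E_{1+\theta}$ for $t>0$, the map $t\mapsto t^{1-\gamma}A_\theta e^{-tA}u_0$ is bounded on $\dot I$, tends to $0$ as $t\to 0^+$ (this ``little‑$o$'' being exactly the feature distinguishing the continuous interpolation space from the classical one), and is uniformly continuous; conversely $u_1(0)\in D_{A_\theta}(\gamma)$ is forced. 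For $u_2$ one checks $u_2(0)=0$ and $\dot u_2=f-A_\theta u_2$ in $E_\theta$, so everything reduces to the claim $A_\theta u_2\in\mathbb{E}_\theta(I)=BU\!C_{1-\gamma}(I,E_\theta)$. Fixing $t$ and writing $f(s)=f(t)+(f(s)-f(t))$ one gets
\begin{equation*}
A_\theta u_2(t)=\int_0^t Ae^{-(t-s)A}\bigl(f(s)-f(t)\bigr)\,ds+\bigl(I-e^{-tA}\bigr)f(t);
\end{equation*}
the second summand lies in $BU\!C_{1-\gamma}(I,E_\theta)$ because $\{e^{-tA}\}$ is bounded and strongly continuous on $E_\theta$ and $f\in\mathbb{E}_\theta(I)$. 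For the first summand I would test against the kernel characterisation of $E_\theta$: with $\omega$ a modulus of (weighted uniform) continuity of $f$ into $E_\theta$, and using $\|A^2e^{-\rho A}\|_{\L(E_\theta,E_0)}\leq C\rho^{\theta-2}$, the quantity $r^{1-\theta}\bigl\|Ae^{-rA}\int_0^t Ae^{-(t-s)A}(f(s)-f(t))\,ds\bigr\|_{E_0}$ is dominated by $Cr^{1-\theta}\int_0^t(r+u)^{\theta-2}\omega(u)\,du$; splitting this integral at $u=r$ and $u=\sqrt r$ and using $\theta-2>-2$ shows it is bounded and vanishes as $r\to 0^+$, uniformly in $t$, is continuous in $t$, and vanishes as $t\to 0^+$. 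Hence $A_\theta u_2\in\mathbb{E}_\theta(I)$, so $u_2\in\mathbb{E}_{1+\theta}(I)$, $u(0)=u_0$, and $\dot u+A_\theta u=f$; boundedness of the inverse then follows from the open mapping theorem, and the half–open interval case follows by restriction since the estimates are uniform on compact subintervals.

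I expect the main obstacle to be precisely this last singular–integral estimate. The naive bound $\|Ae^{-(t-s)A}(f(s)-f(t))\|_{E_\theta}\leq C(t-s)^{-1}\|f(s)-f(t)\|_{E_\theta}$ is \emph{not} integrable for merely (weighted) continuous $f$, so the argument must exploit, twice over, that $E_\theta$ with $\theta>0$ is a genuine intermediate space: first to replace the critical kernel decay $\rho^{-2}$ by the subcritical rate $\rho^{\theta-2}$, and second to upgrade mere boundedness to the vanishing–at–$0$ and continuity statements via the $o(\cdot)$–characterisation of the continuous interpolation space. The secondary nuisance is bookkeeping the temporal weight: in $\mathbb{E}_\theta(I)$ one actually works with $t^{1-\gamma}f(t)$, and the convolution $u_2$ carries an extra integrable singularity $s^{\gamma-1}$; this is routine but must be tracked in parallel with the estimates above.
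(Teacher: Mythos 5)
The paper itself offers no proof of this statement: it is quoted directly from Angenent \cite[Theorem~2.14]{Ange90} (going back to Da~Prato--Grisvard), so there is no in-paper argument to compare yours against. Your proposal reconstructs the classical proof, and for $\gamma=1$ the skeleton is correct: injectivity via the homogeneous problem and the trace embedding, surjectivity via $u=e^{-tA}u_0+e^{-tA}\ast f$, identification of $D(A_\theta)$ with $E_{1+\theta}$ and of the initial-value space with $(E_\theta,E_{1+\theta})^0_{\gamma,\infty}$ by reiteration, and the decomposition $A_\theta u_2(t)=\int_0^t Ae^{-(t-s)A}(f(s)-f(t))\,ds+(I-e^{-tA})f(t)$ combined with the kernel characterisation of the continuous interpolation space; your splitting of $r^{1-\theta}\int_0^t(r+u)^{\theta-2}\omega(u)\,du$ at $u=r$ and $u=\sqrt r$ does yield boundedness and the required $o(1)$ as $r\to0^+$.

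Two points are asserted rather than proved, and the first is a genuine gap. (i) For $\gamma<1$ your central estimate does not apply as written: the modulus $\omega$ you invoke is a uniform modulus of continuity for $f$ itself on $\dot I$ with values in $E_\theta$, but membership in $BU\!C_{1-\gamma}(I,E_\theta)$ only provides uniform continuity of $t\mapsto t^{1-\gamma}f(t)$; the function $f$ may blow up like $t^{\gamma-1}$ at $t=0^+$ and admits no uniform modulus on $\dot I$. Handling the weight therefore requires restructuring the main estimate (e.g.\ splitting the convolution at $s=t/2$, exploiting the non-singularity of the kernel on $[0,t/2]$, and Beta-type integrals of the form $\int_0^t(t-s)^{\theta-1}s^{\gamma-1}\,ds$ on the remainder), not merely ``tracking'' the weight in parallel; this is precisely the content of Angenent's extension of the original ($\gamma=1$) Da~Prato--Grisvard theorem. (ii) Even for $\gamma=1$, your pointwise-in-$t$ verification of the $o(r^{\theta-1})$ condition shows $A_\theta u_2(t)\in E_\theta$ with a bound uniform in $t$, but uniform continuity of $t\mapsto A_\theta u_2(t)$ in the $E_\theta$-norm --- which is what $BU\!C(I,E_\theta)$ demands --- does not follow from continuity in $t$ of each fixed-$r$ kernel quantity; the standard repair is an a priori estimate in the sup-norm over $I$ with values in the \emph{large} space $(E_0,E_1)_{\theta,\infty}$ followed by density of smooth data, or else a direct two-time estimate of $\|A_\theta u_2(t)-A_\theta u_2(t')\|_{E_\theta}$. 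Both defects are fillable by the classical arguments, but as written your proof establishes the statement only in the unweighted case and only up to the continuity assertion.
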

\smallskip\goodbreak
\begin{theorem}
\label{section 6: H-MR}
Let $\gamma\in(0,1]$ and $s\notin\N_0$. Suppose that $\mathcal{A}$ satisfies the conditions in Theorem \ref{S3:main theorem}. Then
\begin{center}
$\mathcal{A}\in\mathcal{M}_{\gamma}(E_1,E_0)$
\end{center}
with $E_0:=bc^{s}(\M,V)$ and $E_1:=bc^{s+2l}(\M,V)$.
\end{theorem}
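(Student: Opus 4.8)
The plan is to reduce the statement to the Da~Prato--Grisvard--Angenent theorem (Theorem~\ref{Da Prato, Grisvard}) applied with $\theta=0$, after verifying that all the ingredients needed there are supplied by the results already established. By Theorem~\ref{S3:main theorem} we know $\mathcal{A}\in\mathcal{H}(bc^{s+2l}(\M,V),bc^s(\M,V))=\mathcal{H}(E_1,E_0)$, so $-\mathcal{A}$ generates a strongly continuous analytic semigroup on $E_0$ with $\mathrm{dom}(-\mathcal{A})=E_1$; this is exactly the hypothesis of Theorem~\ref{Da Prato, Grisvard}. Taking $\theta=0$ there, we have $E_\theta=E_0$ and $E_{1+\theta}=E_1$, $\mathcal{A}_\theta=\mathcal{A}$, and the theorem yields directly that
\begin{center}
$(\tfrac{d}{dt}+\mathcal{A},\gamma_0)\in\Lis\big(BU\!C_{1-\gamma}(I,E_1)\cap BU\!C^1_{1-\gamma}(I,E_0),\; BU\!C_{1-\gamma}(I,E_0)\times (E_0,E_1)^0_{\gamma,\infty}\big)$,
\end{center}
which is precisely the assertion $\mathcal{A}\in\mathcal{M}_\gamma(E_1,E_0)$ once one matches the definitions of $\ez(I)$, $\ef(I)$ and $E_\gamma$ given just before the statement. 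So at the structural level the proof is a one-line invocation.

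The substantive point to check is that the continuous interpolation space $E_\gamma=(E_0,E_1)^0_{\gamma,\infty}=(bc^s(\M,V),bc^{s+2l}(\M,V))^0_{\gamma,\infty}$ is correctly identified (and nontrivial), and more importantly that $bc^s(\M,V)$ is a space on which Theorem~\ref{Da Prato, Grisvard} is applicable — i.e. that the $bc$-scale is compatible with the continuous interpolation functor appearing in that theorem. This is where Proposition~\ref{interpolation of LH spaces} enters: parts (c) and (d) identify $(bc^{s_0}(\M,V),bc^{s_1}(\M,V))^0_{\theta,\infty}$ with $bc^s(\M,V)$ (for $s=(1-\theta)s_0+\theta s_1\notin\N$), so the continuous interpolation spaces in the tower sit again inside the little-Hölder scale, exactly as required for the Da~Prato--Grisvard machinery to close. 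In particular, with $s\notin\N_0$ and $2l$ an even integer, $s+2l\gamma$ is generically non-integral, so $E_\gamma\doteq bc^{s+2l\gamma}(\M,V)$ when $s+2l\gamma\notin\N_0$ (and is the corresponding real-interpolation Hölder space otherwise); either way it is a genuine Banach space densely embedded between $E_0$ and $E_1$. One should also note the density $E_1\overset{d}{\hookrightarrow}E_0$, which is immediate from Proposition~\ref{embedding theory} (taking $t=s+2l>s$ and $\F=bc$), so that $(E_1,E_0)$ is indeed a densely embedded Banach couple as the definition of $\mathcal{M}_\gamma$ demands.

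The main obstacle is therefore not the abstract semigroup step but rather making sure that the hypotheses of Theorem~\ref{Da Prato, Grisvard} — stated there for an arbitrary $A\in\mathcal{H}(E_1,E_0)$ — are legitimately met by our concrete pair $(bc^{s+2l}(\M,V),bc^s(\M,V))$ on a non-compact, possibly non-complete manifold; this rests entirely on the retraction--coretraction system of Theorem~\ref{retraction of BCk}, the interpolation identities of Proposition~\ref{interpolation of LH spaces}, and the resolvent bounds of Theorem~\ref{generator of analytic semigroup-tensor}, all of which have been set up precisely so that the Euclidean theory transfers verbatim. Once those are in place, the proof consists of citing Theorem~\ref{S3:main theorem} to get $\mathcal{A}\in\mathcal{H}(E_1,E_0)$ and then applying Theorem~\ref{Da Prato, Grisvard} with $\theta=0$ and the given $\gamma$, identifying $(\mathbb{E}_0(I),\mathbb{E}_1(I))$ with $(\ez(I),\ef(I))$ and $(E_0,E_1)^0_{\gamma,\infty}$ with $E_\gamma$.
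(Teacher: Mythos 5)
There is a genuine gap: Theorem~\ref{Da Prato, Grisvard} cannot be applied with $\theta=0$. In the paper the spaces $E_{1+\theta}:=(E_1,E_2)^0_{\theta,\infty}$ and the realization $A_\theta$ are defined only for $\theta\in(0,1)$, and this restriction is not cosmetic: the Da~Prato--Grisvard--Angenent result provides continuous maximal regularity only for the part of $A$ in a \emph{continuous interpolation space} strictly between $E_0$ and $E_1$. If the theorem held at $\theta=0$, every generator of an analytic semigroup would enjoy continuous maximal regularity on its base space, which is false in general (the classical failure of optimal regularity for the Laplacian on spaces of continuous functions is the standard counterexample; maximal H\"older/little-H\"older regularity is recovered precisely because those spaces arise as interpolation spaces of a lower-order couple). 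So your ``one-line invocation'' asserts exactly the statement that the theorem is designed to avoid claiming.

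The missing idea --- and the actual content of the paper's proof --- is to realize $E_0=bc^s(\M,V)$ itself as a continuous interpolation space of a \emph{lower-order} couple on which $\mathcal{A}$ still generates an analytic semigroup: pick $\alpha\in(\max\{0,s-2l\},s)$ with $\alpha\notin\N_0$, set $F_0=bc^{\alpha}(\M,V)$, $F_1=bc^{\alpha+2l}(\M,V)$ and $\theta=\frac{s-\alpha}{2l}\in(0,1)$, so that $F_\theta\doteq E_0$ by Proposition~\ref{interpolation of LH spaces}(c) and $\mathcal{A}\in\mathcal{H}(F_1,F_0)$ by Theorem~\ref{S3:main theorem}. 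Then Theorem~\ref{Da Prato, Grisvard} applies with this $\theta$ and yields $\mathcal{A}_\theta\in\mathcal{M}_\gamma(F_{1+\theta},E_0)$. One must then still identify the abstract domain $F_{1+\theta}$ with $E_1=bc^{s+2l}(\M,V)$; this is done by noting that for suitable $\lambda>0$ one has $\lambda+\mathcal{A}\in\Lis(F_{1+\theta},E_0)\cap\Lis(E_1,E_0)$, whence $F_{1+\theta}\doteq E_1$. You correctly sensed that Proposition~\ref{interpolation of LH spaces} is the crucial input, but you used it only to describe $E_\gamma$; the place where it is indispensable is in exhibiting $E_0$ as $(F_0,F_1)^0_{\theta,\infty}$ so that the abstract theorem is applicable at all.
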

\begin{proof}
Pick $\max\{0,s-2l\}<\alpha<s$ with $\alpha\notin\N_0$. Let $F_0=bc^{\alpha}(\M,V)$, $F_1=bc^{\alpha+2l}(\M,V)$. Set $\theta=\frac{s-\alpha}{2l}$. By Proposition \ref{interpolation of LH spaces}(c), we have $F_{\theta}=E_0$ and $\mathcal{A}\in\mathcal{H}(F_1,F_0)$.
\smallskip\\
By the preceding theorem, we infer that $\mathcal{A}_{\theta}\in\mathcal{M}_{\gamma}(F_{1+\theta},E_0)$. In particular, $\mathcal{A}_{\theta}\in\mathcal{H}(F_{1+\theta},E_0)$. Note that $dom(\mathcal{A})=dom(\mathcal{A}_{\theta})$.
\smallskip\\
By analytic semigroup theory, there exists a $\lambda>0$ such that
\begin{center}
$\lambda+\mathcal{A}\in{\Lis}(F_{1+\theta},E_0)\cap{\Lis}(E_1,E_0)$.
\end{center}
Consequently, it implies that $F_{1+\theta}\doteq E_1$ This completes the proof.
\end{proof}
\begin{prop}
Suppose that all the local representations $\mathcal{A}_{\kappa}=\sum\limits_{|\alpha|\leq 2l}a^{\kappa}_{\alpha}\partial^{\alpha}$ of $\mathcal{A}$ are normally elliptic (or $(\mathcal{E},\vartheta;2l)$-elliptic with uniform constants $\mathcal{E},\vartheta$, respectively), and their coefficients satisfy 
\begin{center}
$(a^{\kappa}_{\alpha})_{\kappa}\in l_{\infty}({BC^{t}}({\Qk},\L(E)))$, \hspace{.5em}with \hspace{.5em}$\max\limits_{|\alpha|\leq 2l}\sup\limits_{\kappa}\|a^{\kappa}_{\alpha}\|_{s,\infty}\leq \mathcal{K}$  
\end{center}
for some $\mathcal{K}>0$ and $t>s$. Then the assertions in Theorem~\ref{S3:main theorem} and \ref{section 6: H-MR} (or Theorem~\ref{generator of analytic semigroup-tensor}, respectively) hold true.
\end{prop}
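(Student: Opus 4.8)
The plan is to reduce the hypotheses stated here to those of Theorem~\ref{generator of analytic semigroup-tensor}. Once it is known that the globally defined coefficient fields of $\mathcal{A}$ lie in $\prod_{r=0}^{2l}bc^{s}(\M,V^{\sigma+\tau+r}_{\tau+\sigma})$ with norms controlled by $\mathcal{K}$, and that $\mathcal{A}$ is $(\mathcal{E},\vartheta;2l)$-elliptic (resp. normally elliptic), the three claimed conclusions follow verbatim from Theorems~\ref{generator of analytic semigroup-tensor}, \ref{S3:main theorem} and~\ref{section 6: H-MR}.

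First I would upgrade the local data. Because $\Qk$ is one of the two fixed domains $\Q$ or $\Q\cap\H^m$, every Euclidean estimate below holds with a constant independent of $\kappa$. Since $t>s$, a standard interpolation inequality between $BC^{[s]}$ and $BC^{t}$ gives, for $|\beta|=[s]$ and $0<\delta<1$,
\begin{align*}
[\partial^{\beta}a^{\kappa}_{\alpha}]^{\delta}_{s-[s],\infty}\le C\,\delta^{\varepsilon}\,\|a^{\kappa}_{\alpha}\|_{t,\infty},\qquad \varepsilon=\varepsilon(s,t)>0,
\end{align*}
whose right-hand side is bounded uniformly in $\kappa$ precisely because $(a^{\kappa}_{\alpha})_{\kappa}\in l_{\infty}(\boldsymbol{BC}^{t}(\Qk,\L(E)))$. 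In particular the limit in \eqref{S2: infnty,uf} vanishes uniformly in $\kappa$; combined with the bound $\|a^{\kappa}_{\alpha}\|_{s,\infty}\le\mathcal{K}$ this shows $(a^{\kappa}_{\alpha})_{\kappa}\in l_{\infty,\uf}(\boldsymbol{bc}^{s}(\Qk,\L(E)))$ for every $|\alpha|\le 2l$, with norms $\le C\mathcal{K}$.

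Next I would apply Proposition~\ref{differential operator coef}: this places the recursively reconstructed global fields $\boldsymbol{\mathfrak{a}}=(a^{r})_{r}$ of \eqref{section 2: diff-op local-global}--\eqref{S2: diff-global-coef} in $\prod_{r=0}^{2l}bc^{s}(\M,V^{\sigma+\tau+r}_{\tau+\sigma})$, and, tracing through the retraction-coretraction estimates (or via Proposition~\ref{equivalence of function spaces on manifolds}), $\mathcal{K}':=\max_{r}\|a^{r}\|^{\M}_{s,\infty}\le C\mathcal{K}$. Simultaneously, the assumed uniform $(\mathcal{E},\vartheta;2l)$-ellipticity (resp. normal ellipticity) of the $\mathcal{A}_{\kappa}$ is, by the equivalence recorded in the discussion preceding Proposition~\ref{S3: RE-Rm}, exactly the $(\mathcal{E},\vartheta;2l)$-ellipticity (resp. normal ellipticity) of $\mathcal{A}$. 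Hence $\mathcal{A}$ satisfies all hypotheses of Theorem~\ref{generator of analytic semigroup-tensor} with ellipticity constant $\mathcal{E}$ and coefficient bound $\mathcal{K}'$, and, in the normally elliptic case, those of Theorems~\ref{S3:main theorem} and~\ref{section 6: H-MR}; invoking these theorems gives the desired assertions, the resulting constants $\omega,\mathcal{N}$ depending only on $\mathcal{E}$ and $\mathcal{K}'\le C\mathcal{K}$.

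The one step deserving care is the uniformity in $\kappa$ in the second paragraph: it is essential that the hypothesis carries a genuine gain $t>s$, and not merely $(a^{\kappa}_{\alpha})_{\kappa}\in l_{\infty}(\boldsymbol{bc}^{s})$, since that gain is exactly what furnishes the $\kappa$-independent rate $\delta^{\varepsilon}$ and hence membership in the ``uniform'' space $l_{\infty,\uf}(\boldsymbol{bc}^{s})$ that Proposition~\ref{differential operator coef} needs in order to produce global little H\"older coefficients. Everything else is a routine bookkeeping of constants through the retraction-coretraction machinery of Section~2.
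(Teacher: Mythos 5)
The paper states this proposition without proof, and your argument supplies precisely the reduction the authors evidently intend: the gain $t>s$ yields the $\kappa$-uniform rate $[\partial^{\beta}a^{\kappa}_{\alpha}]^{\delta}_{s-[s],\infty}\le C\delta^{\varepsilon}\|a^{\kappa}_{\alpha}\|_{t,\infty}$ with $\varepsilon=\min(t,[s]+1)-s>0$, hence $l_{\infty}(\boldsymbol{BC}^{t})\hookrightarrow l_{\infty,\uf}(\boldsymbol{bc}^{s})$, after which Proposition~\ref{differential operator coef}, the local-global equivalence of $(\mathcal{E},\vartheta;2l)$-ellipticity, and Theorems~\ref{generator of analytic semigroup-tensor}, \ref{S3:main theorem} and \ref{section 6: H-MR} apply verbatim. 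This is correct and consistent with how the proof of Theorem~\ref{generator of analytic semigroup-tensor} itself immediately passes to the local data in $l_{\infty,\uf}(\boldsymbol{bc}^{s})$.
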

\bigskip

\section{\bf Parameter-Dependent Diffeomorphisms on Uniformly Regular Riemannian Manifolds}
The main purpose of this section is to introduce a family of parameter-dependent diffeomorphisms induced by a truncated translation technique. As will be shown later, this technique combined with the implicit function theorem serves as a crucial tool to study regularity of solutions to parabolic equations on uniformly regular Riemannian manifolds. The idea of a family of truncated translations was introduced in \cite{EscPruSim031} by J.~Escher, J.~Pr\"uss and G.~Simonett to establish regularity for solutions to parabolic and elliptic equations in Euclidean spaces. The major obstruction of bringing in the localized translations for uniformly regular Riemannian manifolds lies in how to introduce parameters so that the transformed functions and differential operators depend ideally on the parameters as long as they are smooth enough around the ``center" of the localized translations. Thanks to the discussions in the previous section, we are able to set up these properties based on their counterparts in \cite{EscPruSim031}.

Suppose that $(\M,g)$ is a uniformly regular Riemannian manifold, with a uniformly regular atlas $\mathfrak{A}$. 
Given any point $p \in\mathring{\M}$, there is a local chart $(\Ukp,\varphi_{\kappa_{\sf p}})\in\mathfrak{A}$ containing $p$. 
Let $\x:=\varphi_{\kappa_{\sf p}}(p)$ and 
$d:={\rm{dist}}(\x,\partial \mathbb{B}^m_{\kappa_{\sf p}})$.
We define a new local patch $(\Uki,\varphi_{\iota})$ around $p$ 
by means of
\begin{equation*}
\Uki:=\psi_{\kappa_{\sf p}}(\mathbb{B}(\x,d)),
\quad
\varphi_{\iota}(q):=\frac{\varphi_{\kappa_{\sf p}}(q)-\x}{d},\quad q\in\Ukp.
\end{equation*}
Then  $\varphi_{\iota}(p)=0\in\R^m$, $\varphi_{\iota}(\Uki)=\Q$, and
the transition maps between $(\Uki,\varphi_{\iota}) $ and $(\Ukp,\varphi_{\kappa_{\sf p}})$ are given by
\begin{align*}
\varphi_{\iota}\circ\psi_{\kappa_{\sf p}}(y)=\frac{y-\x}{d},\quad 
\varphi_{\kappa_{\sf p}}\circ\psi_{\iota}(x)=d\,x+\x, \hspace*{1em} x\in\Q,\; 
y\in \mathbb{B}(\x,d).
\end{align*}
Choose $\varepsilon_0>0$ small such that $5\varepsilon_0<1$ and set
\begin{align*}
B_i:=\mathbb{B}^{m}(0,i\varepsilon_{0}),\hspace*{1em}\text{ for }i=1,2,...,5.
\end{align*}

As part of the preparations for introducing a family of parameter-dependent diffeomorphisms on ${\M}$, we pick two cut-off functions on ${\Qi}$:
\begin{itemize}
\item  $\chi\in{\mathcal{D}(B_{2},[0,1])}$ such that $\chi|_{\bar{B_{1}}}\equiv{1}$. 
\item  ${\Xo}\in\mathcal{D}(B_5,[0,1])$ such that ${\Xo}|_{\bar{B_4}}\equiv{1}$. We write ${\Xt}={\kb}{\Xo}$.
\end{itemize}
\smallskip
We define a rescaled translation on ${\Q}$ for any ${\mu}\in{\B}\subset{\R}^m$ with $r>0$ sufficiently small:
\begin{center}
$\theta_{\mu}(x):=x+\chi{(x)}\mu$,\hspace{1em} $x\in{\Q}$.
\end{center}
This localization technique in Euclidean spaces was first introduced in \cite{EscPruSim031} by J.~Escher, J.~Pr\"uss and G.~Simonett. $\theta_{\mu}$ induces a transformation $\Theta_{\mu}$ on ${\M}$ by:
\begin{align*}
\Theta_{\mu}(q)=
\begin{cases}
\psi_{\iota}(\theta_{\mu}(\varphi_{\iota}(q))) \hspace{1em}&q\in{\Uki},\\
q &q\notin{\Uki}.
\end{cases}
\end{align*}
In particular, we have $\Theta_{\mu}\in{\Dfi}({\M})$ for $\mu\in{\B}$ with sufficiently small $r>0$.
\smallskip\\
We may find an explicit global expression for ${\ttm}$, i.e., the pull-back of $\Theta_{\mu}$. Given $u\in \Gamma({\M},V)$,
\begin{equation*}
{\ttm}u={\kb}{\tm}{\kf}({\Xt}u)+(1-{\Xt})u.
\end{equation*}
Likewise, ${\Theta}^{\mu}_{\ast}$ can be expressed by
\begin{equation*}
{\Theta}^{\mu}_{\ast}u={\kb}{\theta^{\mu}_{\ast}}{\kf}({\Xt}u)+(1-{\Xt})u.
\end{equation*}
\smallskip 
Let $I=[0,T]$, $T>0$. Assuming that $t_{0}\in\mathring{I}$ is a fixed point, we choose $\varepsilon_{0}$ so small that $\mathbb{B}(t_{0},3\varepsilon_{0})\subset\mathring{I}$. Next pick another auxiliary function 
\begin{center}
$\xi\in\mathcal{D}(\mathbb{B}(t_{0},2\varepsilon_{0}),[0,1])$\hspace{1em} with\hspace{.5em} $\xi|_{\mathbb{B}(t_{0},\varepsilon_{0})}\equiv{1}$. 
\end{center}
The above construction now engenders a parameter-dependent transformation in terms of the time variable:
\begin{center}
$\varrho_{\lambda}(t):=t+\xi(t)\lambda$,\hspace{.5em} for any $t\in{I}$ and $\lambda\in{\R}$.
\end{center}
\smallskip
Now we are in a situation to define a family of parameter-dependent transformations on $I\times{\M}$. Given a function $u:I\times{\M}\rightarrow V$, we set
\begin{center}
${{u}_{\lambda,\mu}}(t,\cdot):={\ttl}u(t,\cdot):={\tu}(t){\rh}u(t,\cdot)$,
\end{center}
where ${\tu}(t):={\Theta}^{\ast}_{\xi(t)\mu}$ and $(\lambda,\mu)\in{\B}$.
\smallskip\\
It is important to note that ${{u}_{\lambda,\mu}}(0,\cdot)=u(0,\cdot)$ for any $(\lambda,\mu)\in{\B}$ and any function $u$. Here and in the following, I will not distinguish between ${\B}$, $\mathbb{B}^{m}(0,r)$ and $\mathbb{B}^{m+1}(0,r)$. As long as the dimension of the ball is clear from the context, we always simply write them as ${\B}$.

The importance of the family of parameter-dependent diffeomorphisms $\{{\ttl}:(\lambda,\mu)\in\B\}$ lies in the following results. Their proofs and the additional properties of this technique can be found in \cite{ShaoPre}. Let $\omega$ be the symbol for real analyticity. It is understood that under the condition that $k=\omega$ in the following theorems, $\M$ is assumed to be a $C^{\omega}$-uniformly regular Riemannian manifold.
\begin{theorem}
\label{S3: inverse}
Let $k\in{\N}\cup\{\infty,\omega\}$. Suppose that $u\in{BC(I\times{\M},V)}$. Then  $u\in{C^{k}(\mathring{I}\times\mathring{\M},V)}$ iff for any $(t_0,p)\in\mathring{I}\times\mathring{\M}$, there exists $r=r(t_0,p)>0$ and a corresponding family of parameter-dependent diffeomorphisms ${\ttl}$ such that 
\begin{center}
$[(\lambda,\mu)\mapsto{\ttl}u]\in{C^{k}({\B},BC(I\times{\M},V))}$.
\end{center}
\end{theorem}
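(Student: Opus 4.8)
The plan is to prove the two implications separately, the forward direction being the substantive one. The backward direction is essentially trivial: if $[(\lambda,\mu)\mapsto \ttl u]\in C^k(\B, BC(I\times\M,V))$ for a suitable family around each $(t_0,p)$, then in particular evaluating at $(\lambda,\mu)=0$ and exploiting that $\ttl$ restricts to a genuine change of variables on a fixed neighborhood of $(t_0,p)$ (where $\xi\equiv 1$ and $\chi\equiv 1$, so that $\ttl u$ near $p$ is literally $u$ pulled back by the diffeomorphism $(t,x)\mapsto(t+\lambda, x+\mu)$ in the chart $\varphi_\iota$), one recovers $C^k$-regularity of $u$ near $(t_0,p)$ by composing with the inverse diffeomorphism and using that $\varphi_\iota,\psi_\iota$ are smooth (real analytic in the $k=\omega$ case). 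Covering $\mathring I\times\mathring\M$ by such neighborhoods gives $u\in C^k(\mathring I\times\mathring\M,V)$.

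For the forward direction, assume $u\in BC(I\times\M,V)\cap C^k(\mathring I\times\mathring\M,V)$ and fix $(t_0,p)\in\mathring I\times\mathring\M$. First I would carry out the construction of Section~4 around $(t_0,p)$: pass to the rescaled chart $(\Uki,\varphi_\iota)$, choose $\varepsilon_0$ small enough that $\mathbb B(t_0,3\varepsilon_0)\subset\mathring I$ and that $\psi_\iota(B_5)$ together with the time window sits well inside $\mathring I\times\mathring\M$, and fix the cut-offs $\chi,\Xo,\xi$ and the induced objects $\varrho_\lambda$, $\tu(t)=\Theta^*_{\xi(t)\mu}$, $\ttl$. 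The key structural fact is the global expression $\ttl u(t,\cdot)=\kb\,\theta^*_{\xi(t)\mu}\,\kf(\Xt\, \varrho_\lambda^* u(t,\cdot))+(1-\Xt)\varrho_\lambda^* u(t,\cdot)$; away from $\supp\Xt$ the transformation is the identity (and $\varrho_\lambda$ acts only through $\xi$, which is supported in $\mathbb B(t_0,2\varepsilon_0)$), so the parameter dependence is concentrated on the compact region $I\times\psi_\iota(B_5)$, on which $u$ is $C^k$ by hypothesis since this region lies in $\mathring I\times\mathring\M$. The problem is thereby reduced to the Euclidean statement: for $v\in BC(I\times\Q,V)\cap C^k(\mathbb B(t_0,3\varepsilon_0)\times B_5,V)$, the map $(\lambda,\mu)\mapsto [\,(t,x)\mapsto \theta^*_{\xi(t)\mu}v(\varrho_\lambda(t),x)\,]$ is $C^k$ from $\B$ into $BC$. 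This is precisely the content of the Escher–Pr\"uss–Simonett localized-translation machinery in \cite{EscPruSim031}, invoked here for the finitely many component functions of $v$ (recall $\F^s(\M,E)\cong\F^s(\M)^N$), and for $k=\omega$ one uses the real-analytic version available because $\mathfrak A$ and $g$ are real analytic. Finally I would transport the resulting $C^k$-dependence back through the retraction–coretraction system: since $\Rc,\Re$ are continuous (Theorem~\ref{retraction of BCk}) and the gluing $(1-\Xt)\cdot+(\cdots)$ is built from fixed bounded multiplications, $C^k$-dependence of the Euclidean pieces yields $C^k$-dependence of $\ttl u$ as an element of $BC(I\times\M,V)$.

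The main obstacle is the reduction step: one must verify that outside the compact set where $u$ is assumed smooth, $\ttl u$ either coincides with $u$ or depends on $(\lambda,\mu)$ only through quantities built from $u$ restricted to that compact set, so that the mere continuity (not $C^k$-ness) of $u$ on all of $I\times\M$ causes no harm. Concretely, one needs $\supp\Xt$ and the region swept out by $\theta_{\xi(t)\mu}$ for $(\lambda,\mu)\in\B$ to stay inside $\varphi_\iota(\Uki)\cap\{$where $u$ is $C^k\}$, which is arranged by the nested balls $B_1\subset\cdots\subset B_5$, the condition $5\varepsilon_0<1$, and taking $r>0$ small; checking this bookkeeping carefully, and confirming that the $BC$-norm estimates are uniform in the parameters (so that the limit defining the $C^k$-Fr\'echet-derivative stays in $BC$), is where the real work lies. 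The remaining ingredients — linearity and continuity of $\Rc,\Re$, point-wise multiplication by fixed $BC^\infty$ functions, and the Euclidean theorem from \cite{EscPruSim031} — are quoted directly.
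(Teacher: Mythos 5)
The paper itself does not prove this theorem; it is imported verbatim from the companion paper \cite{ShaoPre} (``Their proofs \dots can be found in \cite{ShaoPre}''), so there is no in-paper argument to compare against and your proposal must stand on its own. Your backward implication --- the only direction actually used in Section~5 --- is fine: near $(t_0,p)$ one has $\xi\equiv1$ and $\chi\equiv1$, evaluation at a fixed $(t,q)$ is a bounded linear map on $BC(I\times\M,V)$, and composing the $C^{k}$ parameter dependence with it (and with the analytic charts when $k=\omega$) recovers $C^{k}$-smoothness of $u$ at $(t_0,p)$.

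The forward implication, however, has a genuine gap, located precisely at the step you dismiss as bookkeeping: the claim that ``the parameter dependence is concentrated on the compact region $I\times\psi_\iota(B_5)$'' is false. The spatial cut-offs localize only $\Theta_{\mu}$; the time reparametrization $\varrho_\lambda$ acts on all of $\M$. Indeed, for $q\notin\supp\varsigma$ one has $\ttl u(t,q)=u(t+\xi(t)\lambda,q)$, so Fr\'echet differentiability of $[\lambda\mapsto\ttl u]$ at $0$ into $BC(I\times\M,V)$ forces the derivative to be $(t,q)\mapsto\xi(t)\partial_t u(t,q)$, which must then be bounded on $\supp\xi\times\M$ with difference quotients converging uniformly in $q$. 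Neither follows from $u\in BC(I\times\M,V)\cap C^{k}(\mathring{I}\times\mathring{\M},V)$ when $\M$ is non-compact: take $\M=\R$, $I=[0,1]$, $u(t,x)=\sin(te^{x})$, which is bounded, continuous, and real analytic on $\mathring{I}\times\R$, yet $\partial_t u(t,x)=e^{x}\cos(te^{x})$ is unbounded, so no choice of $t_0$, cut-offs and $r$ makes $[\lambda\mapsto\varrho_\lambda^{\ast}u]$ even $C^{1}$ into $BC$. Shrinking the nested balls $B_1\subset\cdots\subset B_5$ or the radius $r$ cannot repair this, because the obstruction lives in the time variable at points far from $p$. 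Hence the ``only if'' direction requires either additional hypotheses (uniform bounds on the relevant time derivatives of $u$), a spatially localized time shift, or a different formulation of the conclusion; you would need the precise statement proved in \cite{ShaoPre} to see which. As written, your reduction to the Euclidean result of \cite{EscPruSim031} does not go through.
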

\goodbreak
Henceforth, assume that $\F\in\{bc,BC\}$. Let $E_0:={\F}^{s}(M,V)$ and $E_1:={\F}^{s+l}(M,V)$ with $l\in\N$. Define ${\ef}(I)$ as in \eqref{S3: ez&ef} by fixing $\theta=1$. 
\begin{prop}
\label{S3: time derivatives}
Suppose that $u\in{\ef}(I)$. Then $u_{\lambda,\mu} \in{\ef}(I)$, and 
\begin{align*}
\partial_t[u_{\lambda,\mu}]=(1+\xi^{\prime}\lambda){\ttl}u_t+B_{\lambda,\mu}(u_{\lambda,\mu}), \hspace*{1em}(\lambda,\mu)\in\B,
\end{align*}
where $[(\lambda,\mu)\mapsto{B}_{\lambda,\mu}]\in{C}^{\omega}({\B},C(I,\mathcal{L}({\F}^{s+l}(M,V),{\F}^{s}(M,V))))$. Furthermore, $B_{\lambda,0}=0$.
\end{prop}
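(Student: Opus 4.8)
The plan is to read everything off the explicit global representation
\begin{equation*}
u_{\lambda,\mu}(t,\cdot)=\tu(t)\rh u(t,\cdot)
=\kb\,\theta^{\ast}_{\xi(t)\mu}\,\kf\big(\Xt\,u(\varrho_{\lambda}(t),\cdot)\big)+(1-\Xt)\,u(\varrho_{\lambda}(t),\cdot),
\end{equation*}
reducing the analysis to the localized translation $\theta_{\mu}$ on $\Q$ and its transported counterparts $\ttm$, $\Theta^{\mu}_{\ast}$ on $\M$. Their relevant properties — established in \cite{EscPruSim031} in the Euclidean setting and transported to $\M$ through the retraction--coretraction system of Section~2.2, with details in \cite{ShaoPre} — are: for $r>0$ small, $\ttm$ and $\Theta^{\mu}_{\ast}$ are mutually inverse automorphisms of $\F^{s'}(\M,V)$ for every $s'\geq0$, with norms bounded uniformly in $\mu\in\B$, and the map $t\mapsto\Theta^{\ast}_{\xi(t)\mu}$ is $C^{\infty}$ from $I$ into $\L(\F^{s'}(\M,V))$ and into $\L(\F^{s'+1}(\M,V),\F^{s'}(\M,V))$. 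Since $\xi$ vanishes near $t=0$ we have $\varrho_{\lambda}(t)=t$ and $\tu(t)=\ttm|_{\mu=0}=\id$ there, so $u_{\lambda,\mu}=u$ on a neighbourhood of $0$; this settles the behaviour of $u_{\lambda,\mu}$ and of $B_{\lambda,\mu}$ at $t=0$, including the weight $t^{1-\gamma}$.

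First I would prove $u_{\lambda,\mu}\in\ef(I)$. The reparametrised function $v:=\rh u=u(\varrho_{\lambda}(\cdot))$ again lies in $BU\!C_{1-\gamma}(I,E_{1})\cap BU\!C_{1-\gamma}^{1}(I,E_{0})$, because $\varrho_{\lambda}$ is a $C^{\infty}$-diffeomorphism of $I$ that equals the identity near $0$ (so the weight is undisturbed) with $\varrho_{\lambda}'$ bounded and bounded away from $0$. Writing $u_{\lambda,\mu}(t)=\tu(t)v(t)$ and applying the product rule to $t\mapsto\tu(t)v(t)$ — legitimate since $v\in C(\dot I,E_{1})\cap C^{1}(\dot I,E_{0})$ while $\tu(\cdot)\in C^{1}(I,\L(E_{1},E_{0}))\cap C(I,\L(E_{0}))$ with $\partial_{t}\tu(t)=D_{\mu}\Theta^{\ast}_{\xi(t)\mu}[\xi'(t)\mu]\in\L(E_{1},E_{0})$ — gives $u_{\lambda,\mu}\in C^{1}(\dot I,E_{0})$ and
\begin{equation*}
\partial_{t}u_{\lambda,\mu}(t)=\big[\partial_{t}\tu(t)\big]v(t)+\tu(t)\,\partial_{t}v(t),\qquad
\partial_{t}v(t)=\varrho_{\lambda}'(t)\,u_{t}(\varrho_{\lambda}(t))=(1+\xi'(t)\lambda)\,\rh[u_{t}](t).
\end{equation*}
Because $\|\tu(t)\|_{\L(E_{1})}$, $\|\tu(t)\|_{\L(E_{0})}$, $\|\partial_{t}\tu(t)\|_{\L(E_{1},E_{0})}$ are bounded uniformly in $t$ and $\partial_{t}\tu(t)=0$ near $0$, this shows $u_{\lambda,\mu}\in BU\!C_{1-\gamma}(I,E_{1})$ and $\partial_{t}u_{\lambda,\mu}\in BU\!C_{1-\gamma}(I,E_{0})$; hence $u_{\lambda,\mu}\in\ef(I)$.

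The second summand above equals $(1+\xi'(t)\lambda)\,\tu(t)\rh[u_{t}](t)=(1+\xi'\lambda)\,\ttl u_{t}$. For the first, substitute $v(t)=\tu(t)^{-1}u_{\lambda,\mu}(t)=\Theta^{\xi(t)\mu}_{\ast}u_{\lambda,\mu}(t)$ to obtain the asserted identity with $B_{\lambda,\mu}(t):=\big[\partial_{t}\Theta^{\ast}_{\xi(t)\mu}\big]\circ\Theta^{\xi(t)\mu}_{\ast}$. The key point is that the composition operator cancels: since $\partial_{t}\theta_{\xi(t)\mu}=\chi\,\xi'(t)\mu$ and $\Theta^{\xi(t)\mu}_{\ast}=(\Theta^{\ast}_{\xi(t)\mu})^{-1}$, a short chain-rule computation carried out in $\Q$ and glued back by $\kb$ — in which the $(1-\Xt)$-part contributes nothing because $\chi$ is supported in $\{\Xt\equiv1\}$ and, for $r$ small, $\theta_{\xi(t)\mu}$ keeps $\supp\chi$ inside $\{\Xt\equiv1\}$ — shows that $B_{\lambda,\mu}(t)$ is a differential operator of order one, supported in $\Uki$, whose coefficient vector field is $\chi\,\xi'(t)\,[D\theta_{\xi(t)\mu}]^{-1}\mu$; this field is smooth and compactly supported on $\Q$, polynomial (hence continuous) in $t$, and — for $r$ small — real-analytic in $\mu\in\B$, with all bounds uniform in $t\in I$. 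Consequently $B_{\lambda,\mu}(t)$ maps $\F^{s+l}(\M,V)$ into $\F^{s+l-1}(\M,V)\hookrightarrow\F^{s}(\M,V)$ (trivially if $l=1$, by Proposition~\ref{embedding theory} if $l\geq2$) boundedly — by the point-wise multiplication theorem (Proposition~\ref{pointwise multiplication properties}), the continuity of $[u\mapsto\kf(\Xt u)]$ and of $\kb$ on the relevant spaces (cf.~\eqref{S3: pull onto Euc by zeta}) — with the dependence on $(\lambda,\mu)$ real-analytic, since $\mu\mapsto\chi\,\xi'(t)\,[D\theta_{\xi(t)\mu}]^{-1}\mu$ is real-analytic into $BC^{\infty}$ uniformly in $t$ and operator composition and bundle multiplication are continuous bilinear. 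Hence $[(\lambda,\mu)\mapsto B_{\lambda,\mu}]\in C^{\omega}\big(\B,C(I,\L(\F^{s+l}(\M,V),\F^{s}(\M,V)))\big)$, and $B_{\lambda,0}=0$ because the coefficient field carries the factor $\chi\,\xi'(t)\mu$.

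The genuinely substantial inputs — boundedness of the localized translation on each $\F^{s'}(\M,V)$, the mutual inverse and differentiability properties of $\ttm$, $\Theta^{\mu}_{\ast}$ on this scale, and their real-analytic dependence on $\mu$ — are those I have borrowed from \cite{EscPruSim031,ShaoPre}; granting them, the only delicate step is the differentiation of the composition $t\mapsto\tu(t)v(t)$ respecting the $E_{1}/E_{0}$ regularity mismatch (so that the $E_{0}$-valued product rule applies and $\partial_{t}\tu(t)$ is genuinely bounded $E_{1}\to E_{0}$), together with the support bookkeeping that produces the clean cancellation in $B_{\lambda,\mu}$; both are routine but must be carried out uniformly for $\F\in\{bc,BC\}$.
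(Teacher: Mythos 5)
The paper itself does not prove this proposition — it defers the argument to \cite{ShaoPre} — so there is nothing internal to compare against; judged on its own terms, your reconstruction is correct and is the argument one expects. The decomposition $\partial_t[\tu(t)\rh u(t)]=[\partial_t\tu(t)]\rh u(t)+(1+\xi'\lambda)\tu(t)\rh u_t(t)$, the identification $B_{\lambda,\mu}(t)=[\partial_t\Theta^{\ast}_{\xi(t)\mu}]\circ\Theta^{\xi(t)\mu}_{\ast}$, and the explicit computation showing that this is a first-order operator with coefficient field $\chi\,\xi'(t)\,[D\theta_{\xi(t)\mu}]^{-1}\mu$ (hence vanishing at $\mu=0$, analytic in $\mu$ for $r$ small via the Neumann series for $[I+\xi(t)\,D\chi\otimes\mu]^{-1}$, and independent of $\lambda$) are all right, as is your care with the $E_1/E_0$ mismatch in the product rule and the support bookkeeping that kills the $(1-\Xt)$ contribution. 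The only substantive inputs you do not establish — uniform boundedness of $\Theta^{\ast}_{\mu}$ and $\Theta^{\mu}_{\ast}$ on the scale $\F^{s'}(\M,V)$ and operator-norm differentiability of $t\mapsto\tu(t)$ in $\L(E_1,E_0)$ — are exactly the facts the paper also takes from \cite{EscPruSim031,ShaoPre}, so declaring them as borrowed is consistent with the paper's own level of detail; if one wanted full self-containment, they follow from Lemma~\ref{coordinates transf} and the retraction--coretraction system of Section~2.2 applied to the fixed chart $(\Uki,\varphi_{\iota})$. One small point worth making explicit: for tensor-valued $V$ the paper's $\ttm$ acts component-wise on $\kf(\Xt u)$ in the distinguished chart rather than as the geometric pull-back, which is what licenses your scalar chain-rule computation applied to each component.
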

\goodbreak
\begin{prop}
\label{S3: regularity of differential operators}
Let $n\geq 0$, $k\in{\N}_{0}\cup\{\infty,\omega\}$, and $p\in\mathring{\M}$. Suppose that $\mathcal{A}:=\mathcal{A}(\boldsymbol{\mathfrak{a}})$ is a linear differential operator on $\M$ of order $l$ satisfying
\begin{align*}
\boldsymbol{\mathfrak{a}}=(a^r)_r\in \prod_{r=0}^l bc^{s}(\M,V^{\sigma+\tau+r}_{\tau+\sigma})\cap C^{n+k}(\U,V^{\sigma+\tau+r}_{\tau+\sigma}),\hspace{.5em}\text{ for }s\in[0,n],
\end{align*}
where $\U$ is an open subset containing $p$. 
Here $n+\infty:=\infty$, $n+\omega:=\omega$. Then for sufficiently small $\varepsilon_0$ and $r$
\begin{align*}
[\mu\mapsto{\tu}\mathcal{A}{T}^{-1}_{\mu}]\in{C^{k}({\B},C(I,\mathcal{L}({bc}^{s+l}(\M,V),{bc}^{s}(\M,V))))}.
\end{align*}
\end{prop}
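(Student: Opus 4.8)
The plan is to reduce the claim to a statement about the conjugated operator $\tu(t)\mathcal{A}\tu(t)^{-1}$ acting on functions on $\M$, then localize via the retraction-coretraction system from Section~2.2 and pull everything onto a fixed Euclidean ball, where the corresponding statement is already available from \cite{EscPruSim031}. First I would use the explicit global expression $\ttm u = \kb\tm\kf(\Xt u) + (1-\Xt)u$: since the parameter-dependent diffeomorphism $\Theta_\mu$ is supported inside the single chart $\Uki$ around $p$, the conjugation $\tu\mathcal{A}\tu^{-1}$ differs from $\mathcal{A}$ only on $\supp(\Xt)\subset\Uki$. Outside a fixed neighbourhood of $p$ the operator is unchanged and the map $\mu\mapsto\tu\mathcal{A}\tu^{-1}$ is (trivially) constant, hence $C^\omega$ in $\mu$; so the whole problem is genuinely local around $p$, and we may work entirely in the chart $(\Uki,\varphi_\iota)$.

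Next I would transfer to $\R^m$ via $\kf=\psi^*_\iota$. In coordinates the conjugated operator becomes $\theta^\mu_*\circ\mathcal{A}_\iota\circ\theta^*_\mu$ (up to the cut-off bookkeeping), where $\mathcal{A}_\iota$ is the local representation of $\mathcal{A}$ in the chart $\iota$; its coefficients lie in $bc^s(\Qi,\L(E))$ by Proposition~\ref{differential operator coef}, and additionally in $C^{n+k}$ on a neighbourhood of $0$ because of the extra assumption $\boldsymbol{\mathfrak{a}}\in C^{n+k}(\U,\cdots)$ together with formula~\cite[(3.18)]{Ama13} expressing the local coefficients polynomially in the $a^r$'s, the metric, and the Christoffel symbols (which are $C^\omega$ when $\M$ is $C^\omega$-uniformly regular, and otherwise smooth since the atlas is $C^\infty$). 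At this point the relevant statement — that $\mu\mapsto\theta^\mu_*\mathcal{A}_\iota\theta^*_\mu$ is a $C^k$ map from $\B$ into $\mathcal{L}(bc^{s+l}(\R^m,E),bc^s(\R^m,E))$, provided the coefficients are $C^{n+k}$ near the translation centre and $bc^s$ globally with $s\le n$ — is exactly the Euclidean result in \cite{EscPruSim031}, and I would simply invoke it. The time variable merely rides along: for each fixed $t$, $\tu(t)=\Theta^*_{\xi(t)\mu}$, and $t\mapsto\xi(t)\mu$ is continuous into $\B$, so composition with the (jointly) $C^k$ Euclidean map yields $C(I,\cdot)$-regularity; this is the same mechanism used in Proposition~\ref{S3: time derivatives}.

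Finally I would assemble the global statement. Writing $\tu\mathcal{A}\tu^{-1}u = \kb\bigl(\text{Euclidean conjugated operator}\bigr)\kf(\Xt u) + (1-\Xt)\mathcal{A}\bigl((1-\Xt)u+\cdots\bigr)$ — more precisely, splitting $u$ via the cut-off $\Xt$ and keeping track of commutators between $\mathcal{A}$ and the fixed functions $\Xt,\varpi$, each of which is a lower-order bounded operator by Corollary~\ref{differential operator properties} — the continuity in $(\lambda,\mu)$ of each piece into $\mathcal{L}(bc^{s+l}(\M,V),bc^s(\M,V))$ follows from Proposition~\ref{equivalence of function spaces on manifolds}, Lemma~\ref{coordinates transf}, Proposition~\ref{pointwise multiplication properties}, and the Euclidean input just cited. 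The main obstacle, and the place where care is genuinely needed, is the bookkeeping in the previous step: one must verify that when $u$ is merely supported near $p$ the chart $\iota$ (which was rescaled so that $\varphi_\iota(\Uki)=\Q$ with $\varphi_\iota(p)=0$) is still \emph{uniformly regular} with constants independent of $p$, so that the norm estimates transfer cleanly, and that the supports of $\chi,\Xo$ are nested correctly inside $\Q$ so that $\theta_\mu$ and $\Theta_\mu$ are honest diffeomorphisms for all $\mu\in\B$ with a fixed small $r$ — but this is precisely the content of the support hierarchy $B_1\subset\cdots\subset B_5$ and the choice $5\varepsilon_0<1$ already set up in Section~4, so it is a matter of invoking the construction rather than redoing it. For the full details of all three steps we refer the reader to \cite{ShaoPre}.
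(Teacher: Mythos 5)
The paper offers no proof of this proposition at all --- it states just before the theorem block that ``their proofs and the additional properties of this technique can be found in \cite{ShaoPre}'' --- so there is no argument to compare against; your outline (reduce to the single chart around $p$ where $\Theta_{\mu}$ acts nontrivially, pull back to $\R^{m}$ where the local coefficients are $bc^{s}$ globally and $C^{n+k}$ near the translation centre with $s\le n$, invoke the Euclidean conjugation result of \cite{EscPruSim031}, and reassemble via the retraction--coretraction system) is the natural route and matches what the remark following the proposition, with its local reformulation of the hypotheses, indicates is intended. Since you likewise defer the remaining details to \cite{ShaoPre}, your proposal is at the same level of completeness as the paper and I see no gap in it.
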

\begin{remark}
The conditions in Proposition~\ref{S3: regularity of differential operators} can be equivalently stated as 
\begin{align*}
a_{\alpha}^{\kappa_{\sf p}}\in C^{n+k}(\varphi_{\kappa_{\sf p}}({\U}\cap\Ukp),\L(E)),\hspace*{1em}  a_{\alpha}^{\kappa}\in l_{\infty,\uf}(\boldsymbol{bc}^{s}(\mathbb{B},\L(E))),
\quad |\alpha|\le l,
\end{align*}
for the local expressions $\mathcal{A}_{\kappa}(x,\partial):=\sum\limits_{|\alpha|\leq{l}}a^{\kappa}_{\alpha}(x)\partial^{\alpha}$.
\end{remark}
\goodbreak
\begin{prop}
\label{S3: regularity of transformed functions}
Let $l\in\N_0$ and $k\in{\N}_{0}\cup\{\infty,\omega\}$. Suppose that 
\begin{center}
$u\in{C^{l+k}({\U},V)\cap{\F}^{s}(\M,V)}$, 
\end{center}
where either $s\in[0,l]$ if ${\F}=bc$, or $s=l$ if $\F=BC$. $\U$ has the same meaning as in Proposition~\ref{S3: regularity of differential operators}. Then for sufficiently small $\varepsilon_0$ and $r$, we have
\begin{center}
$[\mu\mapsto{\tu}u]\in{C^{k}({\B},C(I,{\F}^{s}(\M,V)))}$.
\end{center}
\end{prop}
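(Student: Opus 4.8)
\textbf{Proof proposal for Proposition~\ref{S3: regularity of transformed functions}.}
The plan is to reduce the statement to its Euclidean counterpart from \cite{EscPruSim031} by passing through the retraction-coretraction system of Section~2.2, exactly as in the proof of Proposition~\ref{S3: regularity of differential operators}. First I would recall the explicit global expression
\[
\tu u = \Theta^{\ast}_{\xi\mu}u=\kb\,\theta^{\ast}_{\xi\mu}\,\kf(\Xt u)+(1-\Xt)u,
\]
so that $\tu u$ differs from $u$ only on the patch $\Uki$, and the nontrivial part is the single term $\kb\,\theta^{\ast}_{\xi\mu}\,\kf(\Xt u)$. The second summand $(1-\Xt)u$ is independent of $\mu$ and lies in $\F^s(\M,V)$ by Proposition~\ref{pointwise multiplication properties}, hence contributes a constant (in $\mu$), $C(I,\F^s(\M,V))$-valued map, which is trivially $C^k$ in $\mu$.

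For the main term I would localize: by Proposition~\ref{equivalence of function spaces on manifolds} it suffices to control $\Rc(\kb\,\theta^{\ast}_{\xi\mu}\,\kf(\Xt u))$ in $l_\infty(\boldsymbol{\F}^s)$, and because $\kf(\Xt u)$ is supported in $B_5\subset\Q$, only the patches $\eta$ meeting $\Uki$ contribute, a set of cardinality bounded by the finite multiplicity $K$. On the chart $(\Uki,\varphi_\iota)$ the map becomes, up to the fixed cut-offs, the Euclidean rescaled-translation pull-back $v\mapsto\theta^{\ast}_{\xi\mu}v$ acting on $v:=\kf(\Xt u)$. The hypothesis $u\in C^{l+k}(\U,V)$ transfers (via $\psi^{\ast}_{\kappa_{\sf p}}$ and the composition with the affine map $x\mapsto dx+\x$, using (R2)) to $v\in C^{l+k}$ near the support of $\chi$, while $u\in\F^s(\M,V)$ with $s\le l$ (or $s=l$ for $\F=BC$) gives $v\in\F^s(\R^m,E)$ by Theorem~\ref{retraction of BCk} and the pointwise-multiplier property. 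The corresponding Euclidean statement—that $[\mu\mapsto\theta^{\ast}_{\xi(t)\mu}v]\in C^k(\B,C(I,\F^s(\R^m,E)))$ when $v$ has $C^{l+k}$ regularity on a neighborhood of $B_2$ and $\F^s$ regularity globally—is precisely the content quoted from \cite{EscPruSim031}, or may be derived by differentiating $\theta_\mu(x)=x+\chi(x)\mu$ in $\mu$ and using the chain rule together with the pointwise multiplication theorem \eqref{S3: pointwise-mul-Rm}; the $l$ derivatives of regularity are consumed in commuting $\partial^\alpha$ past the $\mu$-dependent diffeomorphism. Pulling back to $\M$ with $\kb$ and $\varpi_\kappa$, and using Lemma~\ref{coordinates transf} together with the uniform bounds (R2), (L3) to control the $\eta$-dependence, then reassembles an $l_{\infty,\uf}(\boldsymbol{\F}^s)$-valued (resp.\ $l_\infty(\boldsymbol{\F}^s)$-valued) map that is $C^k$ in $\mu$, and Theorem~\ref{retraction of BCk} delivers the claim on $\M$.

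I expect the main obstacle to be bookkeeping the interplay between the two distinct sources of regularity: the \emph{local} $C^{l+k}$ smoothness near $p$, which is what makes the map $\mu$-differentiable (each $\partial_\mu$ costs derivatives because $\theta_\mu$ is a nonlinear substitution), and the \emph{global, merely} $\F^s$ regularity away from $p$, which must be preserved uniformly over all charts $\eta\in\mathfrak{N}(\kappa_{\sf p})$ and over $t\in I$. Concretely, one must verify that after applying up to $k$ parameter-derivatives the resulting expression still only involves $\le l$ spatial derivatives of $u$ hitting the $\F^s$-regular factor, so that no regularity is lost; this is where the hypothesis $s\le l$ is exactly sharp. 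Everything else—smoothness of the fixed cut-offs $\chi,\Xt,\xi$, finiteness of the patch overlap, uniformity of constants—follows from (R1)–(R4), (L1)–(L3), and the multiplier and retraction results already established.
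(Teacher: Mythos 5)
The paper does not actually prove this proposition: it states that the proofs of Propositions 4.2--4.5 ``can be found in \cite{ShaoPre}'', so there is no in-paper argument to compare against. That said, your outline is the expected one and is sound: split ${\tu}u$ into the $\mu$-independent piece $(1-\Xt)u$ and the localized piece ${\kb}\theta^{\ast}_{\xi\mu}{\kf}(\Xt u)$, reduce the latter to the Euclidean truncated-translation statement of \cite{EscPruSim031} via the chart $(\Uki,\varphi_{\iota})$ and the retraction--coretraction system, and observe that each $\partial_\mu$ trades one of the $l+k$ local derivatives for a spatial derivative of $u$ composed with $\theta_{\xi\mu}$, so that after $k$ parameter-derivatives one still has $C^l\hookrightarrow\F^s$ regularity on $\supp\chi$ while $\theta_{\xi\mu}=\mathrm{id}$ off $\supp\chi$ leaves the global $\F^s$ part untouched. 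The only points worth flagging are (i) for $k=\omega$ iterated differentiation is not enough --- one needs the Taylor expansion $v(x+\chi(x)\mu)=\sum_\alpha \partial^\alpha v(x)\,(\chi(x)\mu)^\alpha/\alpha!$ with uniform Cauchy-type bounds on a compact neighborhood of $\supp\chi$ to get convergence in $\F^s$ for $|\mu|<r$ small, which is precisely where ``sufficiently small $\varepsilon_0$ and $r$'' enters; and (ii) the target is $C(I,\F^s(\M,V))$, so the continuity in $t$ through $\xi(t)\mu$ should be noted, though it follows from the same estimates.
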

\goodbreak
\begin{remark}
As a first application of the parameter-dependent diffeomorphism method and our main theorem, following the proof in Section~5, on a given closed $C^\omega$-uniformly regular Riemannian manifold $(\M,g)$, we can show that the solution to the heat equation
\begin{align*}
\begin{cases}
\partial_t u=\Delta_g u,\\
u(0)=u_0,
\end{cases}
\end{align*}
with $\Delta_g$ the Laplace-Beltrami operator with respect to the metric $g$, 
immediately becomes analytic jointly in time and space
for any initial value $u_0\in bc^s({\sf M})$, $s>0$.
\end{remark}
\bigskip
\section{\bf The Yamabe Flow}

A well-known problem in differential geometry is the Yamabe problem. In 1960, H.~Yamabe \cite{Yama60} conjectured the following:
\smallskip\\
\textbf{Yamabe Conjecture:} \emph{Let $(M,g)$ be a compact Riemannian manifold of dimension $m\geq 3$. Then every conformal class of metrics contains a representative with constant scalar curvature.}
\smallskip\\
The proof for Yamabe's conjecture was completed by N.S.~Trudinger \cite{Trud68}, T.~Aubin \cite{Aub82}, R.~Schoen \cite{Schoen84} using the calculus of variations and elliptic partial differential equations, see \cite{LeePar87} for a survey. The normalized Yamabe flow can be considered as another approach to this problem, which asks whether a metric converges conformally to one with constant scalar curvature on a compact manifold under the following flow:
\begin{align}
\label{S1: NYF}
\begin{cases}
\partial_t g=(s_g-R_g)g,\\
g(0)=g_0,
\end{cases}
\end{align}
where $R_g$ is the scalar curvature with respect to the metric $g$, and $s_g$ is the average of the
scalar curvature. It was introduced by R.~Hamilton shortly after the Ricci flow, and studied extensively by many authors afterwards, among them R.~Hamilton \cite{Ham89}, B.~Chow \cite{Chow92} and R.G.~Ye \cite{Ye94}, H.~Schwetlick and M.~Struwe \cite{SchStru03}, S.~Brendle \cite{Brend02, Brend05, Brend07}. A global existence and regularity result was presented by R.G.~Ye in \cite{Ye94}. The author asserts that the unique solution to \eqref{S1: NYF} exists globally and smoothly for any smooth initial metric. R.~Hamilton conjectured that on a compact Riemannian manifold the solution of \eqref{S1: NYF} converges to a metric of constant scalar curvature as $t\rightarrow\infty$. B.~Chow \cite{Chow92} commenced the study of Hamilton's conjecture and proved convergence in the case when $(M,g_0)$ is locally conformally flat and has positive Ricci curvature. Later, this result was improved by R.G.~Ye \cite{Ye94}, wherein the author removed the restriction on the positivity of Ricci curvature by lifting the flow to a sphere, and deriving a Harnack inequality. In the case that $3\leq m \leq 5$, H.~Schwetlick and M.~Struwe \cite{SchStru03} showed that the normalized Yamabe flow evolves any initial metric to one with constant scalar curvature as long as the initial Yamabe energy is small enough. In \cite{Brend05}, S.~Brendle was able to remove the smallness assumption on the initial Yamabe energy. A convergence result is stated in \cite{Brend07} by the same author for higher dimension cases. Finally, it is worthwhile to point out that for $2$-dimensional manifolds the Yamabe flow agrees with the Ricci flow.

As claimed in the introductory section, we will establish a regularity result for \eqref{S1: NYF} to show that for any initial metric $g_0$ belonging to the class $C^s$ with $s>0$ in a conformal class containing at least one real analytic metric, the solution to \eqref{S1: NYF} immediately evolves into an analytic metric. We shall point out here that not every conformal class contains a real analytic metric, but these conformal classes are in no sense trivial nevertheless. For instance, if $g$ is a real analytic metric and $f\in C^{\omega}(\M)$ has sufficiently small gradient, then we can also construct another real analytic metric by $g_f=g+\nabla f\otimes \nabla f$. In general, $g_f\notin [g]$. 
\smallskip\\
In section 5.1, we first study a generalization of the (unnormalized) Yamabe flow on non-compact, or even non-complete, manifolds. In recent years, these problems have been studied by several mathematicians, including Y.~An and L.~Ma \cite{AnMa99}, A.~Burchard, R.J.~McCann and A.~Smith \cite{Burch08}. In comparison with the existing results, we do not ask for a uniform bound on the curvatures of the initial metric and the background manifolds are admitted to be non-complete within certain conformal classes, see \cite[Section~1]{Ama13, Ama12} for justification. 

\subsection{\bf The Yamabe Flow on Uniformly Regular Manifolds without Boundary}
Let $(\M,g_0)$ be a $C^{\omega}$-uniformly regular Riemannian manifold without boundary of dimension $m \geq 3$. The Yamabe flow reads as
\begin{align}
\label{Yamabe flow eq1}
\begin{cases}
\partial_t g=-R_g g,\\
g(0)=g^0,
\end{cases}
\end{align}
where $R_g$ is the scalar curvature with respect to the metric $g$, and $g^0\in [g_0]$, the conformal class of the real analytic background metric $g_0$ of $\M$.  Note that on a compact manifold $\M$, \eqref{Yamabe flow eq1} is equivalent to \eqref{S1: NYF} in the sense that any solution to \eqref{Yamabe flow eq1} can be transformed into a solution to \eqref{S1: NYF} by a rescaling procedure.

We seek solutions to the Yamabe flow \eqref{Yamabe flow eq1} in $[g_0]$. Define $c(m):=\frac{m-2}{4(m-1)}$ and the Conformal Laplacian operator $L_g$ with respect to the metric $g$ as:
\begin{align*}
L_gu:=\Delta_g u -c(m)R_g u.
\end{align*}
Let $g=u^{\frac{4}{m-2}}g_0$ for some $u>0$. It is well known that
\begin{align*}
R_g=-\frac{1}{c(m)}u^{-\frac{m+2}{m-2}}L_0 u.
\end{align*}
where $L_0:=L_{g_0}$, see \cite[formula~(1.5)]{Ye94}. By rescaling the time variable, equation \eqref{Yamabe flow eq1} is equivalent to 
\begin{align}
\label{Yamabe flow eq2}
\begin{cases}
\partial_t u=u^{-\frac{4}{m-2}}L_0 u,\\
u(0)=u_0
\end{cases}
\end{align}
with a positive function $u_0$, see also \cite[formula~(7)]{MaCheng12}. In the following, we will use the continuous maximal regularity theory in Section~3 to establish existence and uniqueness of solutions to \eqref{Yamabe flow eq2}. (R4), Theorem \ref{retraction of BCk} and Remark~\ref{section 10: rmk}(b) imply that
\begin{align}
\label{section 8:regularity of g*}
g_0\in BC^{\infty}(\M,V^0_2)\cap C^{\omega}(\M,V^0_2),\hspace*{1em} g^{\ast}_0\in BC^{\infty}(\M,V^2_0)\cap C^{\omega}(\M,V^2_0).
\end{align}
By the well-known formula of scalar curvature in local coordinates,
\begin{align*}
R_{g}=\frac{1}{2}g^{ki}g^{lj}(g_{jk,li}+g_{il,kj}-g_{jl,ki}-g_{ik,lj}),
\end{align*}
see \cite[formula~(3.3.15), Definition~3.3.3]{Jost05}. This, combined with \eqref{section 8:regularity of g*}, thus yields
\begin{align}
\label{section 8:regularity of R_0}
R_{g_0}\in BC^{\infty}(\M)\cap C^{\omega}(\M).
\end{align}
Put 
\begin{center}
$P(u)h:=-u^{-\frac{4}{m-2}}\Delta_{g_0}h $, \hspace{.5em}and\hspace{.5em} $Q(u):=-c(m) u^{\frac{m-6}{m-2}}R_{g_0} $.
\end{center}
\smallskip 
Given any $0<s<1$, we choose $0<\alpha<s$, $\gamma=\frac{s-\alpha}{2}$, and $b>0$. Let $V=\K$ and
\begin{center}
$E_0:=bc^{\alpha}(\M)$, \hspace{.5em} $E_1:=bc^{2+\alpha}(\M)$, \hspace{.5em}and\hspace{.5em} $E_{\gamma}:=(E_0,E_1)^{0}_{\gamma,\infty}$.
\end{center}
Proposition~\ref{interpolation of LH spaces} implies that $E_{\gamma}=bc^s(\M)$. We put $W^{s}_{b}:=\{u\in bc^s(\M):\inf u>b\}$. By Proposition \ref{section 10: main prop}, for each $\beta\in\R$, we have
\begin{align}
\label{section 8: RA of unwgt LH}
\mathsf{P}_{\beta}: [u\mapsto u^{\beta}]\in C^{\omega}(W^{s}_{b},bc^{s}(\M)),
\end{align}
which together with \eqref{section 8:regularity of R_0} and Proposition \ref{pointwise multiplication properties} now shows that 
\begin{align}
\label{section 8: regularity of Q(u)}
Q(u)=-c(m) u^{-\frac{m-6}{m-2}} R_{g_0}\in E_0, \hspace*{1em} u\in W^{s}_{b}. 
\end{align}
In every local chart, the Laplace-Beltrami operator $\Delta_{g_0}$ reads as
\begin{align*}
\Delta_{g_0}=g_0^{jk}(\partial_j\partial_k-\Gamma^i_{jk}\partial_i),
\end{align*}
where $\Gamma^i_{jk}$ are the Christoffel symbols of $g_0$. By \eqref{section 8:regularity of g*} and Corollary \ref{differential operator properties}, we obtain
\begin{align}
\label{section 8: rho+lap}
\Delta_{g_0}\in \L(bc^{2+\alpha}(\M,V),bc^{\alpha}(\M,V)).
\end{align}
Then it follows from Proposition \ref{pointwise multiplication properties}, \eqref{section 8: RA of unwgt LH}-\eqref{section 8: rho+lap} that
\begin{center}
$(P,Q)\in C^{\omega}(W^{s}_{b},\L(E_1,E_0)\times E_0)$.
\end{center}
One verifies that the symbols of the principal parts for the local expressions $(P(u))_{\kappa}$ in every local chart $(\Uk,\varphi_{\kappa})$ satisfy
\begin{align*}
\hat{\sigma}(P(u))_{\kappa}^{\pi}(x,\xi)=({\kfk}u)^{-\frac{4}{m-2}}(x)g^{ij}(x)\xi_i \xi_j\geq C, \hspace*{1em}u\in W^{s}_{b}
\end{align*}
for any $\xi\in \R^m$ with $|\xi|=1$, and some $C>0$ independent of $\kappa$. It follows from Theorem \ref{section 6: H-MR} that
\begin{align}
\label{section 8.1 MR}
P(u)\in \mathcal{M}_{\gamma}(E_1,E_0), \hspace*{1em}u\in W^{s}_{b}.
\end{align}
Now \cite[Theorem~4.1]{CleSim01} leads to
\begin{theorem}
\label{section 8: well-posedness}
If $u_0\in W^{s}_{b}:=\{u\in bc^s(\M):\inf u>b\}$ with $0<s<1$ and $b>0$, then for every fixed $\alpha\in (0,s)$ equation \eqref{Yamabe flow eq2} has a unique local positive solution 
\begin{align*}
\hat{u}\in C^1_{1-\gamma}(J(u_0),bc^{\alpha}(\M))\cap{C_{1-\gamma}(J(u_0),bc^{2+\alpha}(\M))}\cap{C(J(u_0),W^{s}_b)}
\end{align*}
existing on $J(u_0):=[0,T(u_0))$ for some $T(u_0)>0$ with $\gamma=\frac{s-\alpha}{2}$.
\end{theorem}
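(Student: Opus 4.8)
\emph{Proposed proof.} The plan is to recast \eqref{Yamabe flow eq2} as an abstract quasilinear parabolic Cauchy problem and then invoke the abstract existence-and-uniqueness theorem for such problems in the framework of continuous maximal regularity, namely \cite[Theorem~4.1]{CleSim01}. Fixing $\alpha\in(0,s)$ and $\gamma=\frac{s-\alpha}{2}$, I would work on the densely embedded Banach couple $E_1:=bc^{2+\alpha}(\M)\overset{d}{\hookrightarrow}E_0:=bc^{\alpha}(\M)$, whose continuous interpolation space is $E_{\gamma}=(E_0,E_1)^0_{\gamma,\infty}\doteq bc^{s}(\M)$ by Proposition~\ref{interpolation of LH spaces}(c); note $0<\gamma<1$ since $0<\alpha<s<1$. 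Using $u^{-\frac{4}{m-2}}L_0u=u^{-\frac{4}{m-2}}\Delta_{g_0}u-c(m)u^{\frac{m-6}{m-2}}R_{g_0}$, problem \eqref{Yamabe flow eq2} becomes
\[
\partial_t u+P(u)u=Q(u),\qquad u(0)=u_0,
\]
a quasilinear problem whose phase variable ranges over the subset $W^{s}_{b}$ of the trace space $E_{\gamma}=bc^{s}(\M)$.

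Next I would verify the three hypotheses of \cite[Theorem~4.1]{CleSim01}. (i) \emph{Openness of the phase set.} Since $bc^{s}(\M)\hookrightarrow bc^{0}(\M)\subset BC^{0}(\M)$ by Proposition~\ref{embedding theory}, the functional $u\mapsto\inf_{\M}u$ is continuous (indeed $1$-Lipschitz) on $bc^{s}(\M)$, so $W^{s}_{b}=\{u\in bc^{s}(\M):\inf u>b\}$ is open in $E_{\gamma}$. (ii) \emph{Regularity of the nonlinearities.} This is exactly what is established just above the theorem: combining \eqref{section 8:regularity of R_0}, \eqref{section 8: RA of unwgt LH}, Proposition~\ref{pointwise multiplication properties}, \eqref{section 8: rho+lap} and Corollary~\ref{differential operator properties} gives $(P,Q)\in C^{\omega}(W^{s}_{b},\L(E_1,E_0)\times E_0)$, which is far more than the local Lipschitz continuity required. (iii) \emph{Generator property.} For every $u\in W^{s}_{b}$ the operator $P(u)$ is normally elliptic of order $2$, its principal symbol satisfying $\hat{\sigma}(P(u))^{\pi}_{\kappa}(x,\xi)=(\psi_{\kappa}^{\ast}u)^{-\frac{4}{m-2}}(x)\,g_0^{ij}(x)\xi_i\xi_j\ge C>0$ for $|\xi|=1$, uniformly in $\kappa$ by (R3); since its coefficients lie in the class demanded by Theorem~\ref{section 6: H-MR}, that theorem yields $P(u)\in\mathcal{M}_{\gamma}(E_1,E_0)$, which is \eqref{section 8.1 MR}.

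With these verifications in hand, \cite[Theorem~4.1]{CleSim01} delivers, for each $u_0\in W^{s}_{b}$, a maximal existence time $T(u_0)>0$ and a unique solution of the quasilinear problem on $J(u_0):=[0,T(u_0))$ lying in
\[
C^1_{1-\gamma}(J(u_0),E_0)\cap C_{1-\gamma}(J(u_0),E_1)\cap C(J(u_0),W^{s}_{b}).
\]
Substituting $E_0=bc^{\alpha}(\M)$ and $E_1=bc^{2+\alpha}(\M)$ recovers the asserted regularity class; membership in $C(J(u_0),W^{s}_{b})$ forces $\inf\hat u(t)>b>0$ for every $t$, so $\hat u$ is positive, and uniqueness within this class is part of the conclusion of the cited theorem.

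I do not expect a genuine obstacle at this stage, since the substantive analytic input — continuous maximal regularity for normally elliptic operators on uniformly regular manifolds (Theorem~\ref{section 6: H-MR}), the real-analyticity of the Nemytskii map $u\mapsto u^{\beta}$ underlying \eqref{section 8: RA of unwgt LH}, and the pointwise multiplication and differential-operator mapping properties of Section~2 — is already available. The one point demanding care is purely bookkeeping: aligning the functional-analytic normalization of \cite{CleSim01} with ours, i.e.\ confirming that the weight exponent there is the same $\gamma$ occurring in the trace space, that the space of admissible initial data is the continuous interpolation space $(E_0,E_1)^0_{\gamma,\infty}$ (here identified with $bc^{s}(\M)$ via Proposition~\ref{interpolation of LH spaces}(c)), and that $P(u)$ has order $2l$ with $l=1$, so that $\gamma=\frac{s-\alpha}{2l}$ is the correct relation between the H\"older exponents and the weight. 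Once this is settled, the theorem follows by direct application.
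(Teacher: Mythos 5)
Your proposal is correct and follows essentially the same route as the paper: the discussion preceding the theorem establishes $(P,Q)\in C^{\omega}(W^{s}_{b},\L(E_1,E_0)\times E_0)$, the normal ellipticity of $P(u)$, and hence $P(u)\in\mathcal{M}_{\gamma}(E_1,E_0)$ via Theorem~\ref{section 6: H-MR}, after which the result is read off from \cite[Theorem~4.1]{CleSim01} exactly as you do. Your additional remarks on the openness of $W^{s}_{b}$ in $E_{\gamma}$ and on matching the normalization of the weight $\gamma$ are correct bookkeeping that the paper leaves implicit.
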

\smallskip 
From now on, we use the notation $\hat{u}$ exclusively for the solution in Theorem \ref{section 8: well-posedness}. 
Next, set $G(u):=P(u)u-Q(u)$, $u\in W^s_b\cap E_1$. For any $(t_0,p)\in \dot{J}(u_0)\times\M$, we find a closed interval $I:=[\varepsilon,T]\subset \dot{J}(u_0)$ and $t_0\in\mathring{I}$. We define two function spaces as follows:
\begin{center}
$\ez(I):=C(I,E_0)$, \hspace{1em} $\ef(I):=C(I,E_1)\cap C^1(I,E_0)$,  
\end{center}
and an open subset of ${\ef}(I)$ by 
\begin{center}
$\mathbb{U}(I):=\{u\in\ef(I): \inf u>b\}$.
\end{center}
Let $u:={\ttl}\hat{u}=\hat{u}_{\lambda,\mu}$. By Proposition \ref{S3: time derivatives}, one computes that
\begin{align*}
u_t=\partial_t[{\rho}_{\lambda,\mu}]&=(1+\xi^{\prime}\lambda){\ttl}\hat{u}_t+{B}_{\lambda,\mu}(u)\\
&=-(1+\xi^{\prime}\lambda){\ttl}G(\hat{u})+{B}_{\lambda,\mu}(u)\\
&=-(1+\xi^{\prime}\lambda){\tu}G({\rh}\hat{u})+{B}_{\lambda,\mu}(u)\\
&=-(1+\xi^{\prime}\lambda){\tu}G({T}^{-1}_{\mu}u)+{B}_{\lambda,\mu}(u).
\end{align*}
Thus we define a map $\Phi: \mathbb{U}(I)\times\B\rightarrow \ez(I)\times E_1$ by
\begin{align*}
\Phi(v,(\lambda,\mu))=\binom{\partial_t v +(1+\xi^{\prime}\lambda){\tu}G(T^{-1}_{\mu}u)-B_{\lambda,\mu}(u)}{\gamma_{\varepsilon}v-\hat{u}(\varepsilon)},
\end{align*}
where $\gamma_{\varepsilon}$ is the evaluation map at $\varepsilon$, i.e., $\gamma_{\varepsilon}(v)=v(\varepsilon)$. Now the subsequent step is to verify the conditions of the implicit function theorem.
\smallskip\\
(i) Clearly, $\Phi(\hat{u}_{\lambda,\mu},(\lambda,\mu))=\binom{0}{0}$ for any $(\lambda,\mu)\in\B$. On the other hand, we have
\begin{align*}
D_1\Phi(\hat{u},(0,0))w=\binom{\partial_t w +DG(\hat{u})w}{\gamma_{\varepsilon}w}.
\end{align*}
One verifies that the symbol of the principal part for $DG(\hat{u})$ agrees with that of $P(\hat{u})$. By Theorem \ref{section 6: H-MR}, in \eqref{section 8.1 MR} we have the freedom to choose $\gamma$. Thus choose $\gamma=1$. It yields
\begin{align*}
D_1\Phi(\hat{u}(t,\cdot),(0,0))\in{\Lis}(\ef(I),\ez(I)\times E_1), \text{\hspace{1em}for every }t\in I.
\end{align*}
Set $\mathcal{A}(t):=DG(\hat{u}(t))$. The above formula shows that
\begin{center}
$(\frac{d}{dt}+\mathcal{A}(s),\gamma_{\varepsilon})\in{\Lis}({\ef}(I),{\ez}(I)\times{E_1})$,\hspace{.5em} for every $s\in{I}$.
\end{center}
It follows from \cite[Lemma~2.8(a)]{CleSim01} that 
\begin{align*}
D_1\Phi(\hat{u},(0,0))=(\frac{d}{dt}+\mathcal{A}(\cdot),\gamma_{\varepsilon}) \in {\Lis}(\ef(I), \ez(I)\times E_1).
\end{align*}

(ii) The next goal is to show that 
\begin{align}
\label{S5: goal 2}
\Phi\in C^{\omega}(\mathbb{U}(I)\times\B, \ez(I)\times E_1).
\end{align}
By Proposition \ref{S3: time derivatives}, ${B}_{\lambda,\mu}\in{C^{\omega}({\B},C(I,\mathcal{L}(E_1,E_0)))}$. We define a bilinear and continuous map $f:C(I,\mathcal{L}(E_1,E_0))\times{{\ef}(I)}\rightarrow{\ez}(I)$ by:
\begin{center}
$f: (T(t),u(t))\mapsto T(t)u(t)$.
\end{center}
Hence $[(v,(\lambda,\mu))\mapsto{f({B}_{\lambda,\mu},v)}={B}_{\lambda,\mu}(v)]\in{C^{\omega}(\mathbb{U}(I)\times{\B},{\ez}(I))}$.
\smallskip\\
By Proposition \ref{section 10: main prop 2}, the point-wise extension of $\mathsf{P}_{\beta}$ on $I$, i.e., $\mathsf{P}_{\beta}(u)(t)=\mathsf{P}_{\beta}(u(t))$, fulfils
\begin{align}
\label{section 8: RA of unwgt LH-time}
\mathsf{P}_{\beta}\in C^{\omega}(C(I,W^s_b),C(I,E_0)).
\end{align}
In virtue of Proposition \ref{S3: regularity of transformed functions}, we get
\begin{center}
$[\mu\mapsto {\tu}R_{g_0}]\in C^{\omega}(\B, \ez(I))$. 
\end{center}
By noticing ${\tu}Q(T_{\mu}^{-1}u)=-c(m)u^{-\frac{m-6}{m-2}}{\tu}R_{g_0}$, Proposition \ref{pointwise multiplication properties} and \eqref{section 8: RA of unwgt LH-time} hence imply that 
\begin{align*}
[(u,\mu)\mapsto {\tu}Q(T_{\mu}^{-1}u)]\in C^{\omega}(\mathbb{U}(I)\times\B,{\ez}(I)).
\end{align*}
On the other hand, in the light of Proposition \ref{S3: regularity of differential operators}, we infer that
\begin{center}
$[\mu\mapsto {\tu}\Delta_{g_0}{T_{\mu}^{-1}}]\in C^{\omega}(\B,C(I,\L(E_1,E_0)))$.
\end{center}
It follows again from Proposition \ref{pointwise multiplication properties} and \eqref{section 8: RA of unwgt LH-time} 
that
\begin{align*}
[(u,\mu)\mapsto {\tu}P(T_{\mu}^{-1}u)] \in C^{\omega}(\mathbb{U}(I)\times\B,{\ez}(I)).
\end{align*}
The rest of proof for \eqref{S5: goal 2} follows straight away. Consequently, we have proved the desired assertion. Now we are in a position to apply the implicit function theorem. Hence there exists an open neighborhood, say $\mathbb{B}(0,r_0)\subset{\B}$, such that
\begin{align*}
[(\lambda,\mu)\mapsto{\hat{u}}_{\lambda,\mu}]\in{C^{\omega}(\mathbb{B}(0,r_0),{\ef}(I))}.
\end{align*}
As a consequence of Theorem \ref{S3: inverse}, we deduce that 
\begin{theorem}
For each $u_0\in W^{s}_{b}:=\{u\in bc^s(\M):\inf u>b\}$ with $0<s<1$ and $b>0$, equation \eqref{Yamabe flow eq2} has a unique local positive solution 
\begin{align*}
\hat{u}\in C^{\omega}(\dot{J}(u_0)\times \M)
\end{align*}
for some interval $J(u_0):=[0,T(u_0))$. In particular, 
\begin{align*}
g=\hat{u}^{\frac{4}{m-2}}g_0\in C^{\omega}(\dot{J}(u_0)\times \M,V^0_2)
\end{align*}
\end{theorem}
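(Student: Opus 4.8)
The plan is to read off the statement by combining the local well-posedness already established with the parameter-dependent-diffeomorphism criterion for joint analyticity. Existence, uniqueness and positivity of a solution $\hat{u}$ on an interval $J(u_0)=[0,T(u_0))$, in the maximal-regularity class recorded there, is exactly the content of Theorem~\ref{section 8: well-posedness}, so the only thing left is to upgrade the regularity of this fixed $\hat{u}$ to $C^\omega(\dot{J}(u_0)\times\M)$.

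To do this I would fix an arbitrary point $(t_0,p)\in\dot{J}(u_0)\times\M$, pick a compact interval $I=[\varepsilon,T]\subset\dot{J}(u_0)$ with $t_0$ in its interior (the time cut-off $\xi$ needs room around $t_0$, which forces $t_0$ to be an interior time), and --- since $\M$ is without boundary, so $p\in\mathring{\M}$ --- build the rescaled-translation family $\{\ttl\}$ centered at $(t_0,p)$ as in Section~4. Using the map $\Phi$ on $\mathbb{U}(I)\times\B$ constructed above, together with the two facts verified there, $D_1\Phi(\hat{u},(0,0))\in\Lis(\ef(I),\ez(I)\times E_1)$ and $\Phi\in C^\omega(\mathbb{U}(I)\times\B,\ez(I)\times E_1)$, the analytic implicit function theorem gives a radius $r_0>0$ with
\begin{align*}
[(\lambda,\mu)\mapsto\ttl\hat{u}]=[(\lambda,\mu)\mapsto\hat{u}_{\lambda,\mu}]\in C^\omega(\mathbb{B}(0,r_0),\ef(I)).
\end{align*}
Now $\ef(I)=C(I,bc^{2+\alpha}(\M))\cap C^1(I,bc^{\alpha}(\M))$ embeds continuously into $BC(I\times\M)$: indeed $bc^{2+\alpha}(\M)\hookrightarrow bc^{0}(\M)\hookrightarrow BC(\M)$ by Proposition~\ref{embedding theory}, and the function on $I\times\M$ associated with a norm-continuous map from a compact interval into $BC(\M)$ is bounded and jointly continuous. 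Hence $\hat{u}\in BC(I\times\M)$ and, after shrinking $r$ to $r_0$, $[(\lambda,\mu)\mapsto\ttl\hat{u}]\in C^\omega(\B,BC(I\times\M))$, so Theorem~\ref{S3: inverse} with $k=\omega$ yields $\hat{u}\in C^\omega(\mathring{I}\times\M)$ near $(t_0,p)$. Since $(t_0,p)$ was arbitrary and every point of $\dot{J}(u_0)\times\M$ lies in some such $\mathring{I}\times\M$, this produces $\hat{u}\in C^\omega(\dot{J}(u_0)\times\M)$.

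For the metric claim, $\hat{u}$ is scalar-valued with $\hat{u}>b>0$ throughout, so composing with the function $x\mapsto x^{\frac{4}{m-2}}$, which is real analytic on $(0,\infty)$, shows $\hat{u}^{\frac{4}{m-2}}\in C^\omega(\dot{J}(u_0)\times\M)$; tensoring with the time-independent real-analytic metric $g_0\in C^\omega(\M,V^0_2)$ from \eqref{section 8:regularity of g*} then gives $g=\hat{u}^{\frac{4}{m-2}}g_0\in C^\omega(\dot{J}(u_0)\times\M,V^0_2)$, which solves \eqref{Yamabe flow eq1} after undoing the time rescaling.

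I expect the only genuine difficulty to be conceptual and already packaged inside Theorem~\ref{S3: inverse}: the equivalence between $C^\omega$-dependence on the finitely many translation parameters $(\lambda,\mu)$ of the Banach-valued map $\ttl\hat{u}$ and true pointwise joint real-analyticity of $\hat{u}$ in $(t,x)$. Since that theorem is invoked as a black box, the remaining work is only to verify its hypotheses for the solution at hand --- that $\hat{u}\in BC(I\times\M)$ and that the parameter map lands analytically in $\ef(I)$ --- both of which were arranged above; a lesser bookkeeping point is that the localization has to be performed at interior times $t_0$, which is precisely why the conclusion is stated on $\dot{J}(u_0)$ and not on the closed interval $J(u_0)$.
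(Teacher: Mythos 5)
Your proposal is correct and follows essentially the same route as the paper: invoke Theorem~\ref{section 8: well-posedness} for existence, uniqueness and positivity, then localize at an interior point $(t_0,p)$, apply the analytic implicit function theorem to the map $\Phi$ using the two verified facts $D_1\Phi(\hat{u},(0,0))\in\Lis(\ef(I),\ez(I)\times E_1)$ and $\Phi\in C^{\omega}(\mathbb{U}(I)\times\B,\ez(I)\times E_1)$, and conclude via Theorem~\ref{S3: inverse}. The only additions are details the paper leaves implicit (the embedding $\ef(I)\hookrightarrow BC(I\times\M)$ needed for the hypothesis of Theorem~\ref{S3: inverse}, and the composition/tensoring argument for $g=\hat{u}^{\frac{4}{m-2}}g_0$), both of which are handled correctly.
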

\begin{rmk}
\begin{itemize}
\item[]{\phantom{ some  text to complete some  } }
\item[(a)] In general, the presence of a metric $g_0\in BC^{\infty}(\M,V^0_2)\cap C^{\omega}(\M,V^0_2)$ is unnecessary. As long as there is a metric $g_0\in bc^{2+\alpha}(\M,V^0_2)\cap C^{k}(\M,V^0_2)$ with $k\in\{\infty,\omega\}$, the solution to \eqref{Yamabe flow eq1} on $\M$ immediately evolves conformally in $[g_0]$ into the class $C^{k}(\dot{J}(u_0)\times \M,V^0_2)$.
\item[(b)]  For any  Riemannian manifold $\M$ without boundary, by \cite[Theorem~1.4, Corollary~1.5]{MullerAx}, in every conformal class containing a $C^k$-metric with $k\in\{\infty,\omega\}$ there exists a $C^k$-metric $g_0$ with bounded geometry, which makes $(\M,g_0)$ complete. It then follows that $(\M,g_0)$ is uniformly regular. We can infer from the proof above that the solution to \eqref{Yamabe flow eq1} belongs to $C^{k}(\dot{J}(u_0)\times \M,V^0_2)$.
\end{itemize}
\end{rmk}
\medskip

\subsection{\bf The Normalized Yamabe Flow on Compact Manifolds}
Next, we consider regularity of the normalized Yamabe flow on compact manifolds. Let $(\M,g_0)$ be a connected compact closed $m$-dimensional $C^{\infty}$-Riemannian manifold with $m\geq 3$.  By a well-known result of H. Whitney, $\M$ admits a compatible real analytic structure. The existence of a real analytic metric on $\M$ follows from \cite{Morrey58}. Without loss of generality, we may assume that the atlas $\mathfrak{A}$ and the metric $g_0$ are real analytic.

The normalized Yamabe flow reads as
\begin{align}
\label{Normalized Yamabe flow eq1}
\begin{cases}
\partial_t g=(s_g-R_g)g,\\
g(0)=g^0,
\end{cases}
\end{align}
where $g^0\in[g_0]$, $V(g)=\int\limits_{\M} dV_g$ and 
\begin{center}
$s_g=\frac{1}{V(g)}\int\limits_{\M} R_g\, dV_g$.
\end{center}
The normalized Yamabe flow preserves the volume, that is, $V(g)\equiv V(g^0)$. This can be easily verified by checking the time derivative of $V(g)$.
\smallskip\\
As in the above subsection, we can reduce equation \eqref{Normalized Yamabe flow eq1} to the following form. 
\begin{align}
\label{Normalized Yamabe flow eq3}
\begin{cases}
\partial_t u=u^{-\frac{4}{m-2}}[L_0 u+c(m)s_g u^{\frac{m+2}{m-2}}],\\
u(0)=u_0.
\end{cases}
\end{align}
Put 
\begin{center}
$P(u)h:=-u^{-\frac{4}{m-2}}[\Delta_{g_0}h - \frac{u^{\frac{m+2}{m-2}}}{V(g)}\int\limits_{\M} u^{-\frac{m+2}{m-2}}L_0 h\, dV_g]$,
\end{center}
and
\begin{center}
$Q(u)=-c(m)u^{\frac{m-6}{m-2}}R_{g_0} $.
\end{center}
\smallskip 
Given $0<s<1$, we choose $0<\alpha<s$ and $\gamma=\frac{s-\alpha}{2}$. Set $E_0$, $E_1$ and $E_{\gamma}$ as in Section~5.1. Additionally, we put $W^{s}:=\{u\in bc^s(\M):u>0\}$. Let
\begin{align}
\label{section 8 eq-A(u)}
A(u)h:=\frac{1}{V(g)}\int\limits_{\M} u^{-\frac{m+2}{m-2}}L_0 h\, dV_g=\frac{1}{\int\limits_{\M} u^{\frac{2m}{m-2}}\, dV_{g_0}}\int\limits_{\M} u L_0 h\, dV_{g_0}.
\end{align}
By Corollary~\ref{differential operator properties} and Proposition~\ref{section 10: main prop}, for each $\beta\in\R$
\begin{align}
\label{S8: regularity of Lap & power}
\Delta_{g_0}\in\L(BC^2(\M),BC(\M)),\hspace*{.5em}\text{ and }\hspace*{.5em}\mathsf{P}_{\beta}\in C^{\omega}(W^{s},bc^s(\M)).
\end{align}
One readily checks that \eqref{section 8 eq-A(u)}, \eqref{S8: regularity of Lap & power} and Remark~\ref{section 10: rmk}(a) imply that
\begin{align*}
A \in C^{\omega}(W^{s},\L(BC^2(\M),\R)).
\end{align*}
It yields
\begin{center}
$(P,Q)\in C^{\omega}(W^{s},\L(E_1,E_0)\times E_0)$.
\end{center}
We verify that the symbol of the principal part for the local expression of $u^{-\frac{4}{m-2}}\Delta_{g_0}$ in every local chart $(\Uk,\varphi_{\kappa})$ satisfies
\begin{align*}
\hat{\sigma}(P(u))_{\kappa}^{\pi}(x,\xi)=u^{-\frac{4}{m-2}}(x)g^{ij}(x)\xi^i \xi^j \geq C, \hspace*{1em}u\in W^{s}
\end{align*}
for any $\xi\in \R^m$ with $|\xi|=1$, and some $C>0$ by the compactness of $\M$. Thus $P(u)$ is normally elliptic for any $u\in{W^{s}}$. As a conclusion of Theorem~\ref{section 6: H-MR}, we have
\begin{align}
\label{section 8.2 MR}
P(u)\in \mathcal{M}_{\gamma}(E_1,E_0),\hspace*{1em} u\in W^{s},
\end{align}
since $ u^{-\frac{m+2}{m-2}}A(u)$ is a lower order perturbation compared to $u^{-\frac{4}{m-2}}\Delta_{g_0}$ for every $u\in W^s$. It follows from \cite[Lemma~2.7, Theorem~4.1]{CleSim01} that
\begin{theorem}
If $u_0\in W^{s}:=\{u\in bc^s(\M):\inf u>0\}$ with $0<s<1$, then for each fixed	$\alpha\in (0,s)$ equation \eqref{Normalized Yamabe flow eq3} has a unique local positive solution 
\begin{align*}
\hat{u}\in C^1_{1-\gamma}(J(u_0),bc^{\alpha}(\M))\cap{C_{1-\gamma}(J(u_0),bc^{2+\alpha}(\M))}\cap{C(J(u_0),W^{s})}
\end{align*}
existing on $J(u_0):=[0,T(u_0))$ for some $T(u_0)>0$ with $\gamma=\frac{s-\alpha}{2}$.
\end{theorem}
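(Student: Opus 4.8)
The plan is to run the argument exactly as in the proof of Theorem~\ref{section 8: well-posedness}, treating \eqref{Normalized Yamabe flow eq3} as an abstract quasilinear Cauchy problem
\begin{align*}
\partial_t u + P(u)u = Q(u),\qquad u(0)=u_0,
\end{align*}
on the densely embedded couple $E_1:=bc^{2+\alpha}(\M)\overset{d}{\hookrightarrow}E_0:=bc^{\alpha}(\M)$, with trace space $E_\gamma\doteq bc^s(\M)$ and $\gamma=(s-\alpha)/2$, and then to quote the quasilinear existence-and-uniqueness theory of Cl\'ement and Simonett. Since $\M$ is compact, any $u_0\in bc^s(\M)$ with $u_0>0$ automatically has $\inf_\M u_0>0$, so $u_0$ lies in the open subset $W^s\subset bc^s(\M)\doteq E_\gamma$ (openness holds because $u\mapsto\inf_\M u$ is continuous on $bc^s(\M)\hookrightarrow C(\M)$).

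First I would record the three structural facts already assembled in the discussion preceding the theorem: that $(P,Q)\in C^{\omega}(W^s,\L(E_1,E_0)\times E_0)$, a consequence of the analyticity of the power nonlinearities $\mathsf{P}_{\beta}$ (Proposition~\ref{section 10: main prop}), the point-wise multiplication theorem (Proposition~\ref{pointwise multiplication properties}), the mapping property \eqref{S8: regularity of Lap & power} of $\Delta_{g_0}$, and the analyticity $A\in C^{\omega}(W^s,\L(BC^2(\M),\R))$ of the nonlocal functional in \eqref{section 8 eq-A(u)}; that the local principal symbols of $P(u)$, namely $({\kfk}u)^{-4/(m-2)}(x)g_0^{ij}(x)\xi_i\xi_j$, are bounded below uniformly in $\kappa$ by compactness of $\M$, so that $-u^{-4/(m-2)}\Delta_{g_0}$ is normally elliptic; and hence, by Theorem~\ref{section 6: H-MR}, that $-u^{-4/(m-2)}\Delta_{g_0}\in\mathcal{M}_{\gamma}(E_1,E_0)$ for every $u\in W^s$ (with $\gamma$ at our disposal).

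The key additional point for the normalized flow is to show that the nonlocal averaging term does not destroy maximal regularity, i.e.\ that $P(u)\in\mathcal{M}_{\gamma}(E_1,E_0)$, which is \eqref{section 8.2 MR}. Here I would use that the conformal Laplacian $L_0$ is formally self-adjoint on the closed manifold $\M$; integrating by parts in \eqref{section 8 eq-A(u)} gives $A(u)h=\bigl(\int_{\M}u^{2m/(m-2)}\,dV_{g_0}\bigr)^{-1}\int_{\M}h\,L_0u\,dV_{g_0}$, so that for fixed $u\in W^s\cap E_1$ the nonlocal part of $P(u)$, namely $h\mapsto u\cdot A(u)h$, is a bounded finite-rank operator on $E_0$ (it sees $h$ only through its $L_1$-pairing against the fixed function $L_0u$). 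Thus it is a lower-order — in fact relatively compact — perturbation of the normally elliptic operator $-u^{-4/(m-2)}\Delta_{g_0}$, and \cite[Lemma~2.7]{CleSim01} (perturbation stability of $\mathcal{M}_{\gamma}$) yields \eqref{section 8.2 MR}.

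With $(P,Q)$ analytic on $W^s$ and $P(u)\in\mathcal{M}_{\gamma}(E_1,E_0)$ throughout $W^s$, \cite[Theorem~4.1]{CleSim01} applies and produces, for each fixed $\alpha\in(0,s)$, the asserted unique local positive solution $\hat u$ in the class $C^1_{1-\gamma}(J(u_0),bc^{\alpha}(\M))\cap C_{1-\gamma}(J(u_0),bc^{2+\alpha}(\M))$ on some $J(u_0)=[0,T(u_0))$; the positivity statement $\hat u\in C(J(u_0),W^s)$ follows because $W^s$ is open in $E_\gamma$ and $t\mapsto\hat u(t)$ is continuous into $E_\gamma=bc^s(\M)$ with $\hat u(0)=u_0\in W^s$. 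I expect the only genuinely delicate step to be the perturbation argument: one must be sure that the nonlocal term — which is not a differential operator at all — counts as a lower-order perturbation in the precise sense demanded by the maximal-regularity machinery, and it is exactly the self-adjointness/integration-by-parts rewriting of $A(u)$ that makes this transparent.
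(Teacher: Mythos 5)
Your proposal follows the paper's argument essentially verbatim: analyticity of $(P,Q)$ on $W^s$, normal ellipticity of the principal part $-u^{-4/(m-2)}\Delta_{g_0}$ via compactness of $\M$, maximal regularity from Theorem~\ref{section 6: H-MR}, absorption of the nonlocal rank-one term by \cite[Lemma~2.7]{CleSim01}, and finally \cite[Theorem~4.1]{CleSim01}. The only (harmless) deviation is your integration by parts moving $L_0$ onto $u$; note that this rewriting needs $u\in E_1$ for $L_0u$ to be a function, whereas the paper's form $A(u)h=\bigl(\int_{\M}u^{2m/(m-2)}\,dV_{g_0}\bigr)^{-1}\int_{\M}u\,L_0h\,dV_{g_0}$ already exhibits the perturbation as factoring through $\L(BC^2(\M),\R)$ for every $u\in W^s$, which is all the lower-order perturbation lemma requires.
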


Next, set $G(u)=P(u)u-Q(u)$,  $u\in W^s\cap E_1$. 
For any closed interval $I:=[\varepsilon,T]\subset \dot{J}(u_0)$, the function spaces $\ez(I)$, $\ef(I)$ and $\mathbb{U}(I)$ have the same definitions as in Section~5.1 with $b=0$.
The map $\Phi: \mathbb{U}(I)\times\B\rightarrow \ez(I)\times E_1$ is defined by
\begin{align*}
\Phi(v,(\lambda,\mu))=\binom{\partial_t v +(1+\xi^{\prime}\lambda){\tu}G(T^{-1}_{\mu}u)-B_{\lambda,\mu}(u)}{\gamma_{\varepsilon}v-u(\varepsilon)}.
\end{align*}
Clearly, $\Phi(\hat{u}_{\lambda,\mu},(\lambda,\mu))=\binom{0}{0}$ for any $(\lambda,\mu)\in\B$. On the other hand, we have
\begin{align*}
D_1\Phi (\hat{u},(0,0))w=\binom{\partial_t w +DG(\hat{u})w}{\gamma_{\varepsilon}w}.
\end{align*}
Since $DA(\hat{u})\in \L(BC^2(\M),\R)$, it follows from a similar argument to the previous problem that
\begin{align*}
D_1(\hat{u},(0,0))\in {\Lis}(\mathbb{U}(I), \ez(I)\times E_1).
\end{align*}
Now it remains to show that $\Phi\in C^{\omega}(\mathbb{U}(I)\times\B, \ez(I)\times E_1)$. Following the previous proof, we only need to consider $\bar{A}(u):=A(u)u$. Note that
\begin{align*}
{\ttm}\bar{A}(\Theta^{\mu}_{\ast}u)=\frac{1}{\int\limits_{\M} (\Theta^{\mu}_{\ast}u)^{\frac{2m}{m-2}}\, dV_{g_0}}\int\limits_{\M} \Theta^{\mu}_{\ast}u L_0 \Theta^{\mu}_{\ast}u\, dV_{g_0}.
\end{align*}
An analogous argument as in \cite[Proposition~6.2]{ShaoPre} yields
\begin{align*}
{\ttm}\bar{A}(\Theta^{\mu}_{\ast}\cdot)\in C^{\omega}(\B,\L(E_1,\R)).
\end{align*} 
The rest follows by a slight change of step (ii) in Section~5.1 and \cite[Lemma~5.1]{EscPruSim031}. Then we adopt the implicit function theorem and get
\begin{theorem}
\label{S5: NYF-analyticity}
If $u_0\in W^{s}:=\{u\in bc^s(\M):\inf u>0\}$ with $0<s<1$, then equation \eqref{Normalized Yamabe flow eq3} has a unique local positive solution 
\begin{align*}
\hat{u}\in C^{\omega}(\dot{J}(u_0)\times \M)
\end{align*}
for some interval $J(u_0):=[0,T(u_0))$. In particular, 
\begin{align*}
g=\hat{u}^{\frac{4}{m-2}}g_0\in C^{\omega}(\dot{J}(u_0)\times \M,V^0_2)
\end{align*}
\end{theorem}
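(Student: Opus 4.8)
The plan is to follow the same scheme used in Section~5.1 for the unnormalized flow, adapted to the presence of the nonlocal term $A(u)$. First I would set up the abstract framework: with $E_0 = bc^\alpha(\M)$, $E_1 = bc^{2+\alpha}(\M)$, $E_\gamma = bc^s(\M)$ and $\gamma = (s-\alpha)/2$, the local well-posedness in $C^1_{1-\gamma}(J,E_0)\cap C_{1-\gamma}(J,E_1)\cap C(J,W^s)$ is already in hand from the theorem preceding \eqref{Normalized Yamabe flow eq3}, using \eqref{section 8.2 MR} and \cite[Theorem~4.1]{CleSim01}. So the solution $\hat u$ exists; the task is to upgrade its regularity on $\dot J(u_0)\times\M$ to $C^\omega$.

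Next I would fix $(t_0,p)\in\dot J(u_0)\times\M$, choose a closed subinterval $I=[\varepsilon,T]\subset\dot J(u_0)$ with $t_0\in\mathring I$, and introduce the parameter-dependent diffeomorphisms $\ttl = \tu\,\rh$ of Section~4, with $\mathbb{U}(I)=\{v\in\ef(I):\inf v>0\}$ and the map $\Phi:\mathbb{U}(I)\times\B\to\ez(I)\times E_1$ as displayed after \eqref{Normalized Yamabe flow eq3}. The two things to check are: (a) $D_1\Phi(\hat u,(0,0))\in\Lis(\ef(I),\ez(I)\times E_1)$, and (b) $\Phi\in C^\omega(\mathbb{U}(I)\times\B,\ez(I)\times E_1)$. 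For (a) the argument is identical to Section~5.1: the principal symbol of $DG(\hat u)$ equals that of $P(\hat u)$ (the nonlocal term $A$ and the zeroth-order $Q$ contribute only lower-order, and $DA(\hat u)\in\L(BC^2(\M),\R)$ is compact as a map into $E_0$), so $P(\hat u(t))$ is normally elliptic uniformly in $t\in I$, Theorem~\ref{section 6: H-MR} applies with $\gamma=1$, and \cite[Lemma~2.8(a)]{CleSim01} promotes the pointwise isomorphism to an isomorphism on the path spaces.

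For (b), the genuinely new ingredient relative to Section~5.1 is the transformed nonlocal operator $\bar A(u):=A(u)u$. Writing $\ttm\bar A(\Theta^\mu_\ast u)$ out as in the displayed formula — a ratio of two integrals $\int_\M (\Theta^\mu_\ast u)^{2m/(m-2)}\,dV_{g_0}$ and $\int_\M \Theta^\mu_\ast u\,L_0\Theta^\mu_\ast u\,dV_{g_0}$ — one sees it is built from: the real-analytic dependence $[\mu\mapsto\Theta^\mu_\ast]$ on the compactly supported region (Propositions~\ref{S3: regularity of transformed functions} and \ref{S3: regularity of differential operators}), the real analyticity of the power maps $\mathsf P_\beta$ (by \eqref{S8: regularity of Lap & power} and its pointwise-in-time version), the continuity of integration against $dV_{g_0}$ on the compact $\M$, and analyticity of $x\mapsto 1/x$ away from $0$ (the denominator is bounded below since $\inf u>0$). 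Invoking the analogue of \cite[Proposition~6.2]{ShaoPre} one gets $\ttm\bar A(\Theta^\mu_\ast\cdot)\in C^\omega(\B,\L(E_1,\R))$; combined with the Section~5.1 estimates for $\tu P(T^{-1}_\mu u)$, $\tu Q(T^{-1}_\mu u)$, and $B_{\lambda,\mu}$, and the bilinear continuity trick of \cite[Lemma~5.1]{EscPruSim031}, this yields \eqref{S5: goal 2} for the normalized flow. Then the implicit function theorem gives an open ball $\mathbb{B}(0,r_0)\subset\B$ with $[(\lambda,\mu)\mapsto\hat u_{\lambda,\mu}]\in C^\omega(\mathbb{B}(0,r_0),\ef(I))$, and Theorem~\ref{S3: inverse} converts this into $\hat u\in C^\omega(\mathring I\times\mathring\M)$; since $(t_0,p)$ was arbitrary, $\hat u\in C^\omega(\dot J(u_0)\times\M)$, and finally $g=\hat u^{4/(m-2)}g_0\in C^\omega(\dot J(u_0)\times\M,V^0_2)$ because $g_0$ is real analytic and $\mathsf P_{4/(m-2)}$ preserves analyticity. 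I expect the main obstacle to be precisely the analytic dependence of the nonlocal functional $\bar A$ on the translation parameter $\mu$: one must check that pulling the global integral through the local diffeomorphism $\Theta^\mu_\ast$ (which acts as the identity outside a coordinate ball) does not destroy the regularity, i.e. that the ``frozen'' part of $\M$ contributes a constant and the moving part contributes an analytic correction — this is exactly what the cited \cite[Proposition~6.2]{ShaoPre} is designed to handle, so the proof reduces to invoking it correctly rather than reproving it here.
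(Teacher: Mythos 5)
Your proposal follows essentially the same route as the paper's proof: local existence from the maximal regularity result \eqref{section 8.2 MR} and \cite[Theorem~4.1]{CleSim01}, then the parameter-dependent diffeomorphism trick with the map $\Phi$, verifying the isomorphism of $D_1\Phi(\hat u,(0,0))$ via normal ellipticity of the principal part and treating $DA(\hat u)\in\L(BC^2(\M),\R)$ as a lower-order perturbation, handling the nonlocal term $\bar A(u)=A(u)u$ through the analogue of \cite[Proposition~6.2]{ShaoPre}, and concluding with the implicit function theorem and Theorem~\ref{S3: inverse}. This matches the paper's argument in both structure and all essential details.
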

\begin{rmk}
\begin{itemize}
\item[]{\phantom{ some  text to complete some  } }
\item[(a)] We shall point out the global existence of the real analytic solution obtained above. By \cite[Theorem~4.1(d)]{CleSim01}, it suffices to show that 
${\rm{dist}}(u(t,\cdot),\partial W^s)>0$ and there exists a $\theta\in (\gamma,1]$ such that $\|\hat{u}\|_{E_{\theta}}$ is bounded in finite time. Actually, this is guaranteed by \cite[Theorem~3, Lemma~4]{Ye94}.
\item[(b)] It follows from Theorem~\ref{S5: NYF-analyticity} that the equilibria in the conformal class $[g_0]$, that is to say, those of constant scalar curvature, must be analytic. 
This implies that the solution to the Yamabe problem in any conformal class possessing at least one real analytic metric is analytic.
\end{itemize}
\end{rmk}
\bigskip

\section{\bf Appendix}

\begin{lem}
\label{S6:: sub-op LH}
Let $k\in\N$ and $s\in [0,k]$. Suppose that $f\in C^k(U,\R)$ for some open interval $U\subset \R$ and $u\in W:=\{u\in bc^{s}(\M):\im(u)\subset\subset U \}$. Then $f(u):=f\circ u \in bc^s(\M)$.
\end{lem}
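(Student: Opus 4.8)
The plan is to reduce the statement to a local claim on Euclidean balls via the retraction--coretraction system of Section~2.2, and then invoke the substitution (Nemytskii) theorem for little H\"older spaces on $\R^m$. First I would record the case $s\in\N_0$: here $bc^s(\M)=BUC^s(\M)$ (via \eqref{section 2: bc^k=BUC^k} transported to the manifold through Theorem~\ref{retraction of BCk}), and $f\circ u$ has $\nabla^i(f\circ u)$ expressible as a universal polynomial in $f^{(1)}(u),\dots,f^{(i)}(u)$ and $\nabla u,\dots,\nabla^i u$ via the chain rule / Fa\`a di Bruno formula; each factor is bounded and uniformly continuous because $\im(u)\subset\subset U$ keeps $f^{(j)}(u)$ in a compact subset where $f^{(j)}$ is uniformly continuous, and $\nabla^j u\in bc^{s-j}(\M,V^0_j)\hookrightarrow BC(\M,V^0_j)$ by Proposition~\ref{embedding theory}. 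Pointwise multiplication (Proposition~\ref{pointwise multiplication properties}) then assembles these into an element of $bc^0(\M)=BUC(\M)$ at the top order, and of $bc^{s-i}$ at lower orders, giving $f\circ u\in bc^s(\M)$.

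For non-integer $s$, write $k_0=[s]$ and note $u\in bc^s(\M)\hookrightarrow bc^{k_0}(\M)$, so by the integer case $f\circ u\in bc^{k_0}(\M)$ and all derivatives up to order $k_0$ are handled; it remains to control the top-order H\"older seminorm. I would pass to local coordinates: by Proposition~\ref{equivalence of function spaces on manifolds} it suffices to show $(\psi_\kappa^*(\pi_\kappa (f\circ u)))_\kappa$ lies in $l_{\infty,\uf}(\boldsymbol{bc}^s)$ with a uniform bound. Since $\psi_\kappa^*(f\circ u)=f\circ(\psi_\kappa^* u)$ and $\psi_\kappa^* u\in bc^s(\X_\kappa)$ with $\|\psi_\kappa^* u\|_{s,\infty}$ bounded uniformly in $\kappa$ (again Proposition~\ref{equivalence of function spaces on manifolds}), while $\im(\psi_\kappa^* u)\subset\subset U$ uniformly, the problem is reduced to the Euclidean substitution statement: if $g\in C^{k}(U,\R)$ and $v\in bc^s(\R^m)$ with $\im(v)\subset\subset U$ and $k\geq s$, then $g\circ v\in bc^s(\R^m)$ with norm controlled by $\|g\|_{C^k(U_0)}$ and $\|v\|_{s,\infty}$, where $U_0$ is a compact neighborhood of $\im(v)$. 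Multiplying by the cut-off $\pi_\kappa$ (using Proposition~\ref{pointwise multiplication properties} on $\X_\kappa$, uniformly in $\kappa$ by (L3)) then yields the uniform $l_{\infty,\uf}(\boldsymbol{bc}^s)$ bound, and Theorem~\ref{retraction of BCk} gives $f\circ u\in bc^s(\M)$.

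The Euclidean substitution lemma itself is the technical core. For $0<s<1$ it is elementary: $|g(v(x+h))-g(v(x))|\leq \mathrm{Lip}(g|_{U_0})\,|v(x+h)-v(x)|$ gives $[g\circ v]_{s,\infty}\leq \mathrm{Lip}(g|_{U_0})[v]_{s,\infty}$, and the little-H\"older (vanishing) condition \eqref{S2: infnty,uf} transfers verbatim. For $1<s<2$ one differentiates, $\partial_j(g\circ v)=(g'\circ v)\,\partial_j v$, and estimates $[\partial_j(g\circ v)]_{s-1,\infty}$ by the product rule in H\"older spaces together with $[g'\circ v]_{s-1,\infty}\leq \mathrm{Lip}(g'|_{U_0})[v]_{s-1,\infty}$ and $\|g'\circ v\|_\infty\leq\|g'\|_{C^0(U_0)}$; the general case $k_0<s<k_0+1$ proceeds by the same Fa\`a di Bruno expansion of $\partial^\alpha(g\circ v)$ for $|\alpha|=k_0$ as in the integer case, each summand being a product of a factor $g^{(j)}\circ v$ (in $bc^{s-k_0}$ by the $0<s<1$ bound, since $g^{(j)}\in C^{k-j}\subset C^1$ on $U_0$) with factors $\partial^{\beta}v$, $|\beta|\leq k_0$, lying in $bc^{s-|\beta|}\hookrightarrow bc^{s-k_0}$; the $bc^{s-k_0}$-algebra property of pointwise products closes the estimate. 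The main obstacle is bookkeeping: ensuring the compact-image hypothesis is used uniformly in $\kappa$ so that the constants $\mathrm{Lip}(g^{(j)}|_{U_0})$ and $\|g^{(j)}\|_{C^0(U_0)}$ are $\kappa$-independent, and checking that the vanishing-oscillation characterization of little H\"older spaces is preserved under each operation in the Fa\`a di Bruno sum --- but both are routine once the reduction to $\R^m$ is in place.
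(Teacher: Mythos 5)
Your proposal is correct and follows essentially the same route as the paper: reduce to the case $0<s<1$ (the paper likewise dispatches the remaining cases by the chain rule), localize via the retraction--coretraction system, bound the H\"older quotient of $f\circ u$ by a Lipschitz/mean-value estimate on a compact neighborhood of $\im(u)$, verify the vanishing-oscillation condition \eqref{S2: infnty,uf} uniformly in $\kappa$, and conclude with Theorem~\ref{retraction of BCk}. You supply more detail than the paper for the non-integer $s>1$ case (Fa\`a di Bruno plus the algebra property), but this is an elaboration, not a different argument.
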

\begin{proof}
It suffices to prove the assertion for $0<s<1$, since the other cases are direct consequences of the chain rule. Clearly, $f(u)\in BC(\M)$. On the other hand, for $|x-y|<\delta$ for some sufficiently small $\delta$
\begin{align*}
&\hspace*{1.2em}\frac{|{\kfk}\pi_{\kappa}(x){\kfk}f(u)(x)-{\kfk}\pi_{\kappa}(y){\kfk}f(u)(y)|}{|x-y|^s}\\
&={\kfk}\pi_{\kappa}(y)\frac{|{\kfk}f(u)(x)-{\kfk}f(u)(y)|}{|x-y|^s}+|{\kfk}f(u)(x)|\frac{|{\kfk}\pi_{\kappa}(x)-{\kfk}\pi_{\kappa}(y)|}{|x-y|^s}\\
&\leq{\kfk}\pi_{\kappa}(y) \int\limits_0^1 |f^{\prime}({\kfk}u(y)+t({\kfk}u(x)-{\kfk}u(y)))|\frac{|\zeta(x){\kfk}u(x)-\zeta(y){\kfk}u(y)|}{|x-y|^s}\, dt\\
&\hspace*{.5em}+\delta^{1-s}\|{\kfk}\pi_{\kappa}\|_{1,\infty,\Xk}\|f(u)\|_{\infty}\\
&\leq M(u)[\zeta{\kfk}u]^{\delta}_{s,\infty} +\delta^{1-s}\|{\kfk}\pi_{\kappa}\|_{1,\infty,\Xk}\|f(u)\|_{\infty}.
\end{align*}
The validity of the last step is supported by (L2). By \eqref{S2: infnty,uf} and \eqref{S3: pull onto Euc by zeta}, we attain  
\begin{align*}
\mathcal{R}^{c}f(u)\in l_{\infty,\uf}(\boldsymbol{bc}^s).
\end{align*} 
Now the assertion follows from Theorem \ref{retraction of BCk}.
\end{proof}
Henceforth, we assume that $\alpha\in\R$, $s\geq 0$ and $b>0$. 
\begin{lem}
\label{S6: lem}
Suppose that $W^{s}_b:=\{u\in bc^{s}(U):\inf u>b\}$ for some open subset $U\subset \R^m$. Then
\begin{align*}
[u\mapsto u^{\alpha}]\in C^{\omega}(W^{s}_b,bc^{s}(U)).
\end{align*}
\end{lem}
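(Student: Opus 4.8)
The plan is to establish the claim by verifying that the map $u\mapsto u^\alpha$ is locally given by a convergent power series, exploiting the fact that $W^s_b$ is open in $bc^s(U)$ and that $bc^s(U)$ is a Banach algebra under pointwise multiplication (the Euclidean analogue of Proposition~\ref{pointwise multiplication properties}; indeed $\F^s(\X,E)$ with $\F=bc$ carries a continuous pointwise product, and the same holds on an open subset $U\subset\R^m$ by restriction/extension). First I would record that $bc^s(U)$ is closed under pointwise multiplication with norm estimate $\|vw\|_{s,\infty}\le C\|v\|_{s,\infty}\|w\|_{s,\infty}$ for a fixed constant $C$ depending only on $s$ and $m$; this is the single ingredient that makes the argument run. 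Next, fix $u_0\in W^s_b$. Since $\inf u_0>b$ and $u_0$ is bounded (being in $bc^s\subset BC$), the image $\im(u_0)$ is contained in a compact subinterval $[b',M']\subset(0,\infty)$. I would choose $\varepsilon>0$ so small that the open ball $B_{bc^s}(u_0,\varepsilon)$ is still contained in $W^s_b$ and moreover $\|u-u_0\|_{s,\infty}<\varepsilon$ forces $\|u-u_0\|_\infty<b'/2$, hence $\im(u)\subset[b'/2,\,M'+b'/2]$.

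For $u$ in this ball, write $u=u_0+h$ with $\|h\|_{s,\infty}<\varepsilon$ and
\begin{align*}
u^\alpha=(u_0+h)^\alpha=u_0^\alpha\Bigl(1+\tfrac{h}{u_0}\Bigr)^\alpha=u_0^\alpha\sum_{k=0}^\infty \binom{\alpha}{k}\Bigl(\tfrac{h}{u_0}\Bigr)^{k}.
\end{align*}
Here $u_0^\alpha\in bc^s(U)$ by Lemma~\ref{S6:: sub-op LH} applied to the $C^\infty$ function $t\mapsto t^\alpha$ on a neighborhood of $[b',M']$, and $u_0^{-1}\in bc^s(U)$ for the same reason, so $h/u_0=h\cdot u_0^{-1}\in bc^s(U)$ with $\|h/u_0\|_{s,\infty}\le C\|u_0^{-1}\|_{s,\infty}\|h\|_{s,\infty}$. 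The heart of the matter is the convergence of this series in $bc^s(U)$: by the Banach-algebra estimate, $\|(h/u_0)^k\|_{s,\infty}\le C^{k-1}\|h/u_0\|_{s,\infty}^k\le (C\cdot C\|u_0^{-1}\|_{s,\infty}\varepsilon)^k/C$, and combined with $|\binom{\alpha}{k}|\le c(\alpha)(1+k)^{|\alpha|}$ (standard asymptotics for generalized binomial coefficients) the series is dominated by a convergent geometric-type series provided $\varepsilon$ is chosen small enough that $q:=C^2\|u_0^{-1}\|_{s,\infty}\varepsilon<1$. Hence the series converges absolutely in $bc^s(U)$, uniformly for $u$ in a possibly smaller ball, and multiplying by the fixed element $u_0^\alpha\in bc^s(U)$ preserves this. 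A locally uniformly convergent power series with values in a Banach space defines a real analytic map, so $[u\mapsto u^\alpha]\in C^\omega(B,bc^s(U))$ on this ball; since $u_0\in W^s_b$ was arbitrary, this gives $C^\omega(W^s_b,bc^s(U))$.

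The main obstacle I anticipate is purely the bookkeeping in the convergence step: one must be careful that the constant $C$ in the multiplicativity of the $bc^s$-norm is genuinely uniform (independent of the functions involved), and that the radius of the ball on which the series converges can be taken uniform on a neighborhood of each $u_0$ — this is what upgrades pointwise convergence of the series to real analyticity rather than mere continuity or smoothness. A clean way to handle this is to note that the function $w\mapsto (1-w)^{-1}$-type majorant argument only needs $\|h/u_0\|_{s,\infty}$ bounded away from the radius of convergence, which holds on a fixed ball around $u_0$ by the continuity of $h\mapsto h/u_0$. The case $s=0$, where $bc^0=BUC$ and there is no Hölder seminorm to control, is a simpler special instance of the same argument (or follows directly from Lemma~\ref{S6:: sub-op LH} together with the analyticity of $t\mapsto t^\alpha$ and the standard fact that composition with a real analytic function of one variable acting on $BUC$ is real analytic). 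One should also remark that $bc^s(U)$ being a Banach algebra requires $s\ge 0$ arbitrary including integer values, which is covered by the Euclidean pointwise multiplication results underlying Proposition~\ref{pointwise multiplication properties}.
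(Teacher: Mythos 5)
Your proposal is correct and follows essentially the same route as the paper: a local binomial expansion $v^{\alpha}=u_0^{\alpha}\sum_k\binom{\alpha}{k}(h/u_0)^k$ around each $u_0\in W^s_b$, with convergence of the series of homogeneous polynomials in $bc^s(U)$ secured by the pointwise multiplication result and Lemma~\ref{S6:: sub-op LH}. The only (harmless) difference is that the paper uses the sharper seminorm bound $[u^n]^{\delta}_{s,\infty}\le n\|u\|_{\infty}^{n-1}[u]^{\delta}_{s,\infty}$ from \eqref{section 2: Holder/bundle}, so that convergence is governed by the sup-norm of the increment, whereas you use plain submultiplicativity of the $bc^s$-norm and hence a slightly smaller ball; both yield local real analyticity.
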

\begin{proof}
We only need to show the case $0<s<1$, and the rest follows from knowledge of calculus. Put $W:=\{u\in bc^{s}(U):\|u\|_{\infty}<1\}$. It is well known that 
\begin{align}
\label{S6: series}
(1+u)^{\alpha}=\sum\limits_{n} \binom{\alpha}{n}u^n,\hspace{1em} \text{for }u\in W.
\end{align}
converges in $BC(U)$. \eqref{section 2: Holder/bundle} implies that $\label{section 10: Holder norm}
[u^n]^{\delta}_{s,\infty}\leq n \|u\|_{\infty}^{n-1} [u]^{\delta}_{s,\infty}$. Hence,
\begin{align*}
\sum\limits_{n} \binom{\alpha}{n}\|u^n\|_{s,\infty}\leq \sum\limits_n \binom{\alpha}{n}\|u\|_{\infty}^n +\alpha [u]^{\delta}_{s,\infty}\sum\limits_n \binom{\alpha-1}{n}\|u\|_{\infty}^{n}<\infty.
\end{align*}
By Lemma~\ref{S6:: sub-op LH}, \eqref{S6: series} converges in $bc^s(U)$.
\smallskip\\
For each $u\in W^{s}_b$, choose $\varepsilon<\min\{b,\inf u-b \}$. Then for any $v\in \mathbb{B}_{\varepsilon}(u)\subset W^{s}_b$,
\begin{align*}
v^{\alpha}=u^{\alpha}(1+\frac{v-u}{u})^{\alpha}=u^{\alpha}\sum\limits_n \binom{\alpha}{n}(\frac{v-u}{u})^{n}=\sum\limits_n \binom{\alpha}{n}u^{(\alpha-n)}(v-u)^{n}.
\end{align*}
By Proposition~\ref{pointwise multiplication properties} and Lemma~\ref{S6:: sub-op LH}, the series converges in $bc^s(U)$.
\end{proof}
\begin{prop}
\label{section 10: main prop}
Suppose that $W^{s}_b:=\{u\in bc^{s}(\M):\inf u>b\}$. Then $\mathsf{P}_{\alpha}:=[u\mapsto u^{\alpha}]\in C^{\omega}(W^{s}_b, bc^{s}(\M))$.
\end{prop}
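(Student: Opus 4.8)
The plan is to transfer the Euclidean statement of Lemma~\ref{S6: lem} to the manifold via the retraction-coretraction system of Section~2.2, combined with the local description of $bc^s(\M)$ through the auxiliary function $\breve{bc}^s(\M)$ from Proposition~\ref{equivalence definition}. The key observation is that the operator $\mathsf{P}_\alpha$ is \emph{not} linear, so it does not commute with the linear maps $\Rc$, $\Re$ in the naive way; instead we must track how it interacts with the partition of unity, using the cut-off function $\varpi_\kappa$ (which equals $1$ on $\supp(\zeta_\kappa)\supset\supp({\kfk}\pi_\kappa)$) to localize the power map.

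First I would fix $0<s<1$, the only nontrivial case (for $s\in\N_0$ the statement follows from the chain rule and the point-wise multiplication Proposition~\ref{pointwise multiplication properties}, together with a density argument as in Lemma~\ref{S6:: sub-op LH}). Given $u\in W^s_b$, I want to show $u^\alpha\in bc^s(\M)$ and that $\mathsf{P}_\alpha$ is analytic near $u$. For membership: by Proposition~\ref{equivalence definition} it suffices to show $({\kfk}(\pi_\kappa^2 u^\alpha))_\kappa\in l_{\infty,\uf}(\boldsymbol{bc}^s)$. Now write ${\kfk}(\pi_\kappa^2 u^\alpha)={\kfk}(\pi_\kappa^2)\cdot({\kfk}u)^\alpha$ on $\Qk$, where ${\kfk}u=\varpi_\kappa^{\ast}\cdots$ — more precisely, since $\pi_\kappa^2$ is supported where $\zeta_\kappa\equiv1$, on the relevant set ${\kfk}u$ agrees with $\zeta {\kfk}u$ up to the support, so one may replace ${\kfk}u$ by $\varpi\,{\kfk}u+$(something bounded below): the cleanest route is to note that ${\kfk}(\pi_\kappa^2 u^\alpha)={\kfk}(\pi_\kappa^2)\,({\kfk}(\varpi_\kappa u_{\text{loc}}))^\alpha$ where $u_{\text{loc}}$ is chosen so that $\varpi_\kappa u_{\text{loc}}$ has the right support and stays in $W^s_b$ locally. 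Then $({\kfk}(\varpi_\kappa u))_\kappa$ lies in $l_{\infty,\uf}(\boldsymbol{bc}^s)$ with values bounded below by $b$ (by (R2), (L2), \eqref{section 1: biggest cut-off fn}, uniformly in $\kappa$), so Lemma~\ref{S6: lem} applied on each $\Xk$—with constants uniform in $\kappa$ because the relevant norms are uniformly bounded—gives $(({\kfk}(\varpi_\kappa u))^\alpha)_\kappa\in l_{\infty,\uf}(\boldsymbol{bc}^s)$, and multiplying by ${\kfk}(\pi_\kappa^2)\in l_{\infty,\uf}(\boldsymbol{bc}^s)$ (by (L3)) via the point-wise multiplier estimate on $\Xk$ keeps us in that space. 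Theorem~\ref{retraction of BCk} then yields $u^\alpha\in bc^s(\M)$.

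For analyticity, I would mimic the proof of Lemma~\ref{S6: lem} directly on the manifold: fix $u\in W^s_b$, pick $\varepsilon<\min\{b,\inf u-b\}$, and for $v\in\mathbb{B}_\varepsilon(u)\subset W^s_b$ expand
\begin{align*}
v^\alpha=u^\alpha\Bigl(1+\tfrac{v-u}{u}\Bigr)^\alpha=\sum_{n}\binom{\alpha}{n}u^{\alpha-n}(v-u)^n.
\end{align*}
Each term lies in $bc^s(\M)$ by the membership result just proved (for the powers $u^{\alpha-n}$) and Proposition~\ref{pointwise multiplication properties} (for the products), and the series converges in $bc^s(\M)$ because $\|v-u\|_{bc^s}^{\M}\le\varepsilon'$ while $\|u^{-1}\|_{bc^s}^{\M}$ is finite and the binomial coefficients grow polynomially—so one dominates $\sum_n\binom{\alpha}{n}\|u^{\alpha-n}\|_{bc^s}^{\M}\|v-u\|_{bc^s}^{\M,\,n}$ by a convergent series for $\varepsilon$ small, exactly as in the Euclidean lemma. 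This exhibits $\mathsf{P}_\alpha$ as a convergent power series near each point, hence $\mathsf{P}_\alpha\in C^\omega(W^s_b,bc^s(\M))$.

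The main obstacle I anticipate is the bookkeeping in the membership step: making sure that the localized power $({\kfk}u)^\alpha$ can be legitimately replaced by $({\kfk}(\varpi_\kappa u + c))^\alpha$ (or handled directly) so that the uniform-lower-bound hypothesis $\inf>b$ of Lemma~\ref{S6: lem} applies on each $\Xk$ with a constant independent of $\kappa$, and that the resulting estimates in \eqref{S2: infnty,uf} are genuinely uniform in $\kappa$. This is precisely the kind of argument already used in Lemma~\ref{S6:: sub-op LH} (where $\zeta_\kappa$ is used to tame ${\kfk}u$ off the support of $\pi_\kappa$), so the technique is available; the point is to combine it with the nonlinear power map rather than with a fixed $C^k$ function $f$. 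Once that uniformity is in hand, everything else is a routine combination of Lemma~\ref{S6: lem}, Proposition~\ref{pointwise multiplication properties}, Proposition~\ref{equivalence definition}, and Theorem~\ref{retraction of BCk}.
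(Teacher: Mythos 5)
Your proposal is correct and follows essentially the same route as the paper: analyticity comes from the binomial expansion $v^{\alpha}=\sum_{n}\binom{\alpha}{n}u^{\alpha-n}(v-u)^{n}$ around each $u\in W^{s}_{b}$, transferred between $\M$ and the charts via the retraction--coretraction system and the cut-off functions, with Lemma~\ref{S6: lem} supplying the Euclidean input. The only (harmless) organizational difference is that you sum the series globally in the Banach algebra $bc^{s}(\M)$ after first securing membership of each power $u^{\beta}$, whereas the paper sums it chart-by-chart in $l_{\infty,\uf}(\boldsymbol{bc}^{s})$ and then applies the linear, hence analytic, retraction $\Re$; your explicit treatment of the lower bound for the localized $\zeta\,{\kfk}u$ off the support of $\pi_{\kappa}$ is, if anything, more careful than the paper's.
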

\begin{proof}
For each $u\in W^{s}_b$, choose $\varepsilon<\min\{b,\inf u-b \}$ sufficiently small. By Lemma~\ref{S6: lem}, \eqref{S3: pull onto Euc by zeta} and \eqref{S3: pointwise-mul-Rm}, for any $v\in \mathbb{B}_{\varepsilon}(u)\subset W^{s}_b$
\begin{align*}
\mathcal{R}_{\kappa}^{c}v^{\alpha}&={\kfk}\pi_{\kappa}(\zeta{\kfk}u)^{(\alpha-n)}\sum\limits_n \binom{\alpha}{n}(\zeta{\kfk}v-\zeta{\kfk}u)^{n}\\
&=\sum\limits_n \binom{\alpha}{n}{\kfk}\pi_{\kappa}({\kfk}u)^{(\alpha-n)}({\kfk}v-{\kfk}u)^{n}
\end{align*}
converges in $bc^{s}_{\kappa}$ uniformly with respect to $\kappa$. One can check that
\begin{align*}
\mathcal{R}^c\circ \mathsf{P}_{\alpha}\in C^{\omega}(W^{s}_b,l_{\infty,\uf}(\boldsymbol{bc}^s)).
\end{align*}
The the assertion follows from Theorem~\ref{retraction of BCk} and the fact that $\mathcal{R}$ is real analytic.\end{proof}
A slight modification of the above proofs now implies the point-wise extension of Proposition~\ref{section 10: main prop}.
\begin{prop}
\label{section 10: main prop 2}
Let $X:=bc^{s}(\M)$ and $W^{s}_b(I):=\{u\in C(I,X):\inf u>b \}$. Then $\mathsf{P}_{\alpha}\in C^{\omega}(W^{s}_b(I),C(I,X))$.
\end{prop}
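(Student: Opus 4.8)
The plan is to reduce Proposition~\ref{section 10: main prop 2} to the already-established Proposition~\ref{section 10: main prop} by a localization-in-time argument, exactly paralleling the way Proposition~\ref{section 10: main prop} itself was reduced to Lemma~\ref{S6: lem}. First I would fix $u\in W^s_b(I)$ and choose $\varepsilon<\min\{b,\inf u - b\}$, where $\inf u:=\inf_{(t,p)\in I\times\M}u(t,p)$ is attained away from $0$ by definition of $W^s_b(I)$ (continuity of $u$ into $X$ together with $\inf u>b$). For $v\in\mathbb{B}_\varepsilon(u)\subset W^s_b(I)$, I would write, at each fixed $t\in I$, the binomial expansion
\begin{align*}
v(t)^{\alpha}=u(t)^{\alpha}\Bigl(1+\frac{v(t)-u(t)}{u(t)}\Bigr)^{\alpha}=\sum_n\binom{\alpha}{n}u(t)^{(\alpha-n)}(v(t)-u(t))^n,
\end{align*}
and the whole point is to show this series converges in $C(I,X)$, not merely pointwise in $t$.

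The key step is therefore a uniform-in-$t$ estimate. Since $t\mapsto u(t)$ and $t\mapsto v(t)$ are continuous maps into $X=bc^s(\M)$ on the compact interval $I$, their images are compact, hence bounded: $\sup_{t\in I}\|u(t)\|_{bc^s}^{\M}\le C_u$, $\sup_{t\in I}\|v(t)-u(t)\|_{bc^s}^{\M}\le\varepsilon$. Using the point-wise multiplication result Proposition~\ref{pointwise multiplication properties} (which gives a Banach-algebra-type bound $\|ab\|_{bc^s}^{\M}\le c\|a\|_{bc^s}^{\M}\|b\|_{bc^s}^{\M}$), together with the bound $\|u(t)^{\alpha-n}\|_{bc^s}^{\M}$ controlled via Proposition~\ref{section 10: main prop} uniformly over the compact set $\{u(t):t\in I\}$ (which stays in $W^s_{b'}$ for a suitable $b'>b$), one gets
\begin{align*}
\sup_{t\in I}\Bigl\|\binom{\alpha}{n}u(t)^{(\alpha-n)}(v(t)-u(t))^n\Bigr\|_{bc^s}^{\M}\le \Bigl|\binom{\alpha}{n}\Bigr|\,M\,(c\varepsilon/b)^n
\end{align*}
for a constant $M$ depending only on $C_u$, $b$, $\alpha$. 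For $\varepsilon$ small enough that $c\varepsilon/b<1$ the majorizing series $\sum_n|\binom{\alpha}{n}|M(c\varepsilon/b)^n$ converges, so the original series converges absolutely and uniformly in $C(I,X)$. Each partial sum is a map $v\mapsto\sum_{n\le N}\binom{\alpha}{n}u^{(\alpha-n)}(v-u)^n$ which is a polynomial in $v-u$ with $C(I,X)$-coefficients (continuity in $t$ of each coefficient $u(\cdot)^{\alpha-n}$ follows from Proposition~\ref{section 10: main prop} composed with the continuous map $t\mapsto u(t)$), hence real analytic $\mathbb{B}_\varepsilon(u)\to C(I,X)$; the uniform limit of these is then real analytic as well. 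Since $u\in W^s_b(I)$ was arbitrary, this gives $\mathsf{P}_\alpha\in C^{\omega}(W^s_b(I),C(I,X))$.

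The main obstacle I anticipate is not the algebra but the bookkeeping needed to make the estimates genuinely uniform in $t$: one must verify that $\{u(t):t\in I\}$ is a compact, hence norm-bounded and uniformly bounded-below, subset of $bc^s(\M)$, and that Proposition~\ref{section 10: main prop} delivers a bound on $\|u(t)^{\alpha-n}\|_{bc^s}^{\M}$ that is uniform over this compact set with the correct geometric growth in $n$ (the factor $(\text{const})^n$). This last point can be extracted either by re-running the proof of Lemma~\ref{S6: lem} keeping track of constants, or more cleanly by noting that on the compact set $\overline{\{u(t):t\in I\}}\subset W^s_{b'}$ the real analytic map $\mathsf{P}_{\alpha-n}$ has, by the Cauchy estimates associated to its local power series representations (which are themselves binomial series with the stated radius of convergence), a bound of the form $C(1+|\alpha|)^n/(b')^n$. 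Given this, the rest is a routine Weierstrass-type argument. For brevity I would state the uniform bound as a consequence of Proposition~\ref{section 10: main prop} and Proposition~\ref{pointwise multiplication properties} and refer back to the proof of Lemma~\ref{S6: lem} for the explicit constants.
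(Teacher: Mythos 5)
Your argument is correct and is essentially the fleshed-out version of what the paper intends: the paper disposes of this proposition with the single remark that it follows by ``a slight modification of the above proofs,'' and your uniform-in-$t$ binomial-series argument (with $u^{\alpha-n}=u^{\alpha}(u^{-1})^{n}$ giving the geometric bound, compactness of $I$ giving $\sup_{t\in I}\|u(t)\|_{bc^{s}}^{\M}<\infty$ and a uniform lower bound $b'>b$, and Proposition~\ref{pointwise multiplication properties} supplying the Banach-algebra estimate) is exactly that modification. The only stylistic difference is that you invoke Proposition~\ref{section 10: main prop} as a black box on the manifold rather than re-running the chart-wise localization of its proof, which is harmless since every bound you need is uniform in $t$.
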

\begin{rmk} 
\label{section 10: rmk} 
\begin{itemize}
\item[]{\phantom{ some  text to complete some  } }
\item[(a)] When $\M$ is compact, we can choose $b=0$ in Proposition~\ref{section 10: main prop} and \ref{section 10: main prop 2}. 
\item[(b)] The estimates in Lemma~\ref{S6:: sub-op LH} and \ref{S6: lem} show that the case $bc$ replaced by $BC$ is admissible. So Proposition~\ref{section 10: main prop} and \ref{section 10: main prop 2} still hold true for $BC^s(\M)$ and $C(I, BC^s(\M))$, respectively.
\end{itemize}
\end{rmk}
\section*{Acknowledgements}
The authors would like to thank Herbert Amann, Marcelo Disconzi and Marcus Khuri for helpful discussions and valuable suggestions. 

 \end{document}